\documentclass{article}
\usepackage{psfrag}
\usepackage[parfill]{parskip}
\usepackage{float}
\usepackage{graphicx}
\usepackage{epsfig}
\usepackage{amsmath, amsthm, amssymb}
\usepackage{verbatim}
\usepackage{multicol}
\usepackage{url}
\usepackage{latexsym}

\usepackage{mathrsfs}
\usepackage[colorlinks = true,
            linkcolor  = blue,
            citecolor  =blue,
            urlcolor   =blue,
            bookmarks  = true]{hyperref}
 \usepackage{enumitem}
\setlist[enumerate,1]{label=(\roman*)}
\usepackage[normalem]{ulem}
\usepackage{bm}
\usepackage{dsfont}
\usepackage{stmaryrd}
\usepackage[dvipsnames]{xcolor}  

\newcommand{\address}[1]{\begin{center}\small #1\end{center}}
\usepackage[backend=biber, style=numeric, citestyle=numeric-comp]{biblatex}
\DeclareFieldFormat{labelnumberwidth}{\mkbibbrackets{#1}}
\DeclareNameAlias{author}{family-given}

\DeclareFieldFormat*{title}{#1}

\AtEveryBibitem{%
  \clearfield{issn}
  \clearfield{url}
  \clearfield{eprint}
}

\renewbibmacro{in:}{}

\DeclareFieldFormat[article]{volume}{\textbf{#1}}
\DeclareFieldFormat[article]{number}{\textbf{#1}}
\renewbibmacro*{volume+number+eid}{%
  \printfield{volume}
  \iffieldundef{number}
    {}%
    {\printtext[parens]{\printfield{number}}}
  \setunit{\addcomma\space}%
  \printfield{eid}}

\DeclareFieldFormat[journal]{journaltitle}{\mkbibemph{#1}}

\DeclareNameAlias{author}{family-given}
\DeclareNameAlias{editor}{family-given}

\DeclareDelimFormat{namedelim}{\addcomma\space}
\DeclareFieldFormat[article,incollection,inproceedings,book]{giveninit}{\mkbibnamegiven{#1}}

\DeclareNameFormat{family-given}{%
  \usebibmacro{name:family-given}
    {\namepartfamily}
    {\namepartgiveni} 
    {\namepartprefix}
    {\namepartsuffix}%
  \usebibmacro{name:andothers}}

\DeclareFieldFormat{pages}{#1}

\numberwithin{equation}{section}

\theoremstyle{plain}

\newtheorem{mydef}{Definition}[section]
\newtheorem{myrem}{Remark}[section]
\newtheorem{Theorem}{Theorem}[section]
\newtheorem{lemma}{Lemma}[section]
\newtheorem{proposition}{Proposition}[section]

\def\author#1{\par
    {\centering{\authorfont#1}\par\vspace*{0.05in}}
}

\def\titlefont{\fontsize{13}{15}\bfseries\boldmath\selectfont\centering{}}
\def\authorfont{\fontsize{13}{15}}

\let\affiliationfont\rhfont

\def\address#1{\par
    {\centering{\affiliationfont#1\par}}\par\vspace*{11pt}
}
\def\title#1{
    \thispagestyle{plain}
    \vspace*{-14pt}
    \vskip 79pt
    {\centering{\titlefont #1\par}}%
    \vskip 1em
}

\begin{document}

\title{The smallest singular value of signed random combinatorial matrices}

\vspace{1cm}

\author{Kexin Yu}
\address{Shandong University\\kexinyu01@gmail.com}

\vspace{0.3cm}

\begin{abstract}
Let \(M_n\) be an \(n\times n\) signed random combinatorial matrix whose rows are independent and uniformly distributed over the set of \(\{-1,0,1\}\)-vectors with exactly \(n/2\) zero coordinates. Despite the dependence induced by the row constraints, we prove that there exist constants $C,c > 0$ such that for any $\varepsilon\ge0$, 
\begin{align*} 
\textbf{P}\left(s_{n}(M_n)\le {\varepsilon}{n^{-1/2}}\right)\le C\varepsilon+e^{-cn}. 
\end{align*} 
In particular, the probability that \(M_n\) is singular is exponentially small. Our approach builds on the Combinatorial Least Common Denominator (CLCD) introduced by Tran and develops the method in the present constrained setting.
\end{abstract}

\section{Introduction and Main Results}\label{sec1}
Let $A_n$ be an $n\times n$ real matrix with singular values $s_1(A_n)\ge\cdots\ge s_n(A_n)$. In particular, the largest and smallest singular values of $A_n$ are defined to be 
\begin{align*}
s_1(A_n) = \sup_{x\in \mathbb{S}^{n-1}}\|A_n x\|_2, \quad s_n(A_n) = \inf_{x\in \mathbb{S}^{n-1}}\|A_n x\|_2,
\end{align*}
where $\|\cdot\|_2$ denotes the Euclidean norm on $\mathbb{R}^n$, and $\mathbb{S}^{n-1}$ is the unit Euclidean sphere in $\mathbb{R}^n$.
\par
In computer science, the condition number of $A_n$, i.e., the ratio of $s_1(A_n)$ and $s_n(A_n)$, is an important indicator to measure the stability of a linear system $A_n x=b$ to input perturbations. 
Bounding the condition number of $A_n$ is an important problem in this field. The behavior of $s_1(A_n)$ is well understood under general assumptions on the matrix entries $\xi$. In \cite{MR950344}, Yin, Bai, and Krishnaiah proved that, when $\textbf{E} \xi=0$ and $\textbf{E}|\xi|^4<\infty$, with high probability
\begin{align*}
s_1(A_n) \sim \sqrt{n}.
\end{align*}
Hence, estimating the typical magnitude of the condition number reduces to studying the lower bound of the smallest singular value $s_n(A_n)$, i.e., to establishing quantitative invertibility estimates for $A_n$.
\par
For random matrices with independent and identically distributed (i.i.d.) standard Gaussian entries, the magnitude of $s_n(A_n)$ is of the order $1 / \sqrt{n}$ with high probability. This observation goes back to von Neumann \cite{MR157875}, and it was proved by Edelman \cite{doi:10.1137/0609045} that
\begin{align*}
\textbf{P}\left(s_n\left(A_n\right) \leq \varepsilon n^{-1 / 2}\right) \sim \varepsilon,\qquad \varepsilon\ge 0.
\end{align*}

In the discrete i.i.d. case, in particular for $\{\pm1\}$-valued entries, the main difficulty is arithmetic: a linear form $\sum_i \xi_i v_i$ may have atypically large concentration only when $v$ has strong additive structure.  Tao and Vu introduced the inverse Littlewood--Offord theory as a systematic framework to quantify this principle in \cite{MR2480613}.

Following earlier work of Rudelson \cite{MR2434885} and Tao and Vu \cite{MR2480613}, Rudelson and Vershynin developed a general geometric approach for small ball probability estimates in \cite{MR2407948}. Their results apply to square random matrices with i.i.d. mean-zero, unit-variance, sub-Gaussian entries, and imply the following bound on the smallest singular value:
\begin{align*}
\textbf P\big(s_{n}(A_n)\le \varepsilon n^{-1/2}\big)\le C \varepsilon + e^{-c n},\qquad \varepsilon\ge 0.
\end{align*}
Their argument decomposes the unit sphere into compressible and incompressible vectors and reduces the analysis of incompressible vectors to small ball probability estimates for linear forms of independent coordinates. 
The (inverse) Littlewood--Offord theory supplies the needed anti-concentration input. It provides effective bounds on the small ball probabilities of weighted sums of independent random variables. This is the key ingredient in establishing the lower bound of $s_{n}(A_n)$.

A major breakthrough was obtained by Tikhomirov \cite{MR4076632}, who proved that for an $n\times n$ matrix $B_n$ with i.i.d. Bernoulli entries,
\begin{align*}
\textbf{P}\left(s_n\left(B_n\right) \leq \varepsilon n^{-1 / 2}\right) \leq C \varepsilon+\left(\frac{1}{2}+o_n(1)\right)^n, \qquad \varepsilon\ge 0.
\end{align*}
This result settles the long-standing conjecture on the correct exponential scale of the singularity probability. 

Although the smallest singular value problem for random matrices with i.i.d. entries is comparatively well developed, this problem becomes significantly more difficult when one considers models of random matrices with dependencies between entries. 
A basic and widely studied example is the symmetric
$\{\pm1\}$ ensemble, where the upper triangular entries are independent but the symmetry constraint $M_n=M_n^\top$ induces global dependence. 
In \cite{MR4810062}, Campos, Jenssen, Michelen, and Sahasrabudhe proved that the singularity probability of this model is exponentially small.

Beyond symmetric ensembles, an important class of random matrix models with dependent entries is given by models subject to regularity constraints. Because of the dependence among the entries induced by the regularity constraints, it is generally not convenient to describe the model solely in terms of the marginal distributions of individual entries. Instead, it is more natural to view the model at the level of the matrix ensemble.
Let $M_{n,d}$ denote the set of all $n\times n$ matrices with entries in $\{0,1\}$ whose row and column sums are all equal to $d$ (equivalently, adjacency matrices of $d$-regular directed graphs), and let $A_n$ be a uniformly chosen element of $M_{n,d}$. 

In the dense regime $\min(d,n-d)\ge \lambda n$, Jain, Sah, and Sawhney proved a sharp lower bound for the smallest singular value in \cite{MR4523249}: there exist constants $C_\lambda,c_\lambda>0$ such that
\begin{align*}
\textbf{P}\left(s_{n}(A_n)\le \varepsilon n^{-1/2}\right)\le C_\lambda\varepsilon+2e^{-c_\lambda n},\qquad \varepsilon\ge 0.
\end{align*}
More specifically, their argument built on Tran's Combinatorial Least Common Denominator (CLCD) approach and the associated small ball probability estimates, and developed a suitable variant adapted to the $d$-regular setting.

In \cite{tran2020smallestsingularvaluerandom}, Tran introduced the CLCD in the setting of random combinatorial matrices. He considered an $n\times n$ random matrix $Q_n$ with entries in $\{0,1\}$ whose rows are independent and uniformly distributed over vectors with exactly $n/2$ ones (equivalently, $Q_n$ is the biadjacency matrix of a random bipartite graph in which each left vertex has degree exactly $n/2$). He obtained the following exponential upper bound on the singularity probability: 
\begin{align*}
\textbf{P}\left(s_{n}(Q_n)\le {\varepsilon}{n^{-1/2}}\right)\le C\varepsilon+2e^{-c n},
\qquad \varepsilon\ge 0.
\end{align*}
This study was further developed in the work of Jain, Sah, and Sawhney \cite{MR4356701}, in which they obtained a sharper bound and confirmed \cite[Conjecture 1.4]{MR4356701}:
\begin{align*}
\textbf{P}\left(s_{n}(Q_n)\le {t}{n^{-1/2}}\right)\le C_{\varepsilon}t+\left(\frac12+\varepsilon\right)^n,
\qquad \text{for all } t\ge 0,\,\varepsilon> 0 \text{ and } C_{\varepsilon}\text{ depending on }\varepsilon.
\end{align*}
More recently, Li, Litvak, and Yu \cite{MR5042290} obtained a complementary upper bound for the smallest singular value of dense random combinatorial matrices, showing that the smallest singular value of $Q_n$ is typically of order $n^{-1/2}$.

In a related direction, Cook introduced a signed analogue of the random regular digraph adjacency model in \cite{MR3602844}. 
Let $\mathcal M_{n,d}^{\pm}$ be the set of $n\times n$ matrices $M_{\pm}$ with entries in $\{-1,0,1\}$ such that for every $k\in[n]$,
\begin{align*}
\sum_{i=1}^n |M_{\pm}(i,k)|=\sum_{j=1}^n |M_{\pm}(k,j)|=d.
\end{align*}
If $M_{\pm}$ is uniform on $\mathcal M_{n,d}^{\pm}$ and $C\log^2 n\le d\le n$, then
\begin{align*}
\textbf P\big(\det(M_{\pm})=0\big)=O\!\left(d^{-1/4}\right).
\end{align*}
To better understand Cook's model, a useful representation is $M_{\pm}\stackrel{d}{=} M\circ \Xi$, where $M$ is the $d-$regular digraph adjacency matrix and $\Xi$ is an i.i.d. sign matrix independent of $M$. Here $\circ$ denotes the Hadamard (or Schur) product, so that $M_{\pm}(i,j)\stackrel{d}= M(i,j)\Xi(i,j)$ for each $i,j \in [n]$, in the sense that these two random matrices have the same distribution as random elements of $\{-1,0,1\}^{n\times n}$.

Motivated by these developments, we study random matrices with independent rows, each of which has exactly $n/2$ non-zero entries taking values in $\{\pm1\}$, while the remaining $n/2$ entries are zero. In other words, the numbers of $1$ and $-1$ in each row are not fixed separately, only their sum is prescribed. For this constrained ensemble, we establish a lower bound for the smallest singular value.

As mentioned above, our main result concerns signed matrices whose row vectors are independent and each have exactly $n/2$ non-zero coordinates. Let $\mathcal{M}_{n, n/2}^{ \pm}$ denote the set of $n \times n$ matrices $Q_{\pm}$ with entries in $\{-1,0,1\}$ satisfying
\begin{align*}
    \sum_{j=1}^n\left|Q_{\pm}(i, j)\right|=\frac{n}{2},
\end{align*}
for all $i \in[n]$. 

Let $M_{n}$ be a uniform random element of $\mathcal{M}_{n, n/2}^{ \pm}$. Then $M_n$ admits the distributional representation $M_{n}\stackrel{d}{=} Q_n\circ \Xi$, where $Q_n$ is an $n\times n$ random matrix with entries in $\{0,1\}$ whose rows are independent and uniformly distributed over vectors with exactly $n/2$ ones and $\Xi$ is an i.i.d. sign matrix independent of $Q_n$.

We now state our main result.
\begin{Theorem}\label{main}
Let $n \in \mathbb{N}$ be even. There exist absolute constants $C,c > 0$ such that for all sufficiently large $n$, and for all $\varepsilon\ge0$,
\begin{align*}
\textbf{P}\left(s_n\left(M_n\right) \leq \varepsilon n^{-1/2}\right) \leq C \varepsilon+ e^{-c n}.
\end{align*}

\end{Theorem}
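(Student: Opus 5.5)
We follow the Rudelson--Vershynin geometric scheme, with Tran's CLCD replacing the usual LCD. First, a standard operator norm bound: the entries of each row of $M_n$ are symmetric, and conditional on the support the signs are i.i.d. Rademacher, so $\textbf{P}(\|M_n\|\ge K\sqrt n)\le e^{-n}$ for some absolute $K$; we work on this event throughout. We split $\mathbb{S}^{n-1}$ into compressible vectors (within distance $\rho$ of $\delta n$-sparse unit vectors) and incompressible ones. For a compressible $x$, fix a row $R=Q\circ\xi$. Conditional on the support set $S$ of $R$, $\langle R,x\rangle=\sum_{j\in S}\xi_j x_j$ is a Rademacher sum. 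Since $|S|=n/2$ is uniform, with probability at least $c$ the set $S$ captures a constant fraction of the $\ell_2$ mass of $x$ on its spread part. The Paley--Zygmund or Erd\H{o}s--Littlewood--Offord bound then gives $\textbf{P}(|\langle R,x\rangle|\le c)\le 1-c'$. Tensorizing over the $n$ independent rows and taking a union bound over an $\varepsilon$-net of compressible vectors yields $\textbf{P}(\inf_{x\in \mathrm{Comp}}\|M_nx\|_2\le c\sqrt n)\le e^{-cn}$. Thanks to the signs, this step is easier than in Tran's unsigned model: there is no need to handle the all-ones direction or to project away from $\mathbf 1$.

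For incompressible vectors we apply the invertibility-via-distance lemma: $\textbf{P}(\inf_{\mathrm{Incomp}}\|M_nx\|\le \varepsilon\rho n^{-1/2})\le \frac{C}{n}\sum_k \textbf{P}(\mathrm{dist}(X_k,H_k)\le\varepsilon)$. Bounding this reduces to $\textbf{P}(|\langle X_n,v\rangle|\le\varepsilon)$, where $v$ is a unit normal to $H_n=\mathrm{span}(X_1,\dots,X_{n-1})$ and is independent of $X_n$. Applying the compressible step to $M_n^{\top}$ restricted to $n-1$ rows would show $v$ is incompressible with probability $1-e^{-cn}$, but columns of $M_n$ are not independent. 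Instead we argue directly: $v$ satisfies $M_{n-1}v=0$ with $M_{n-1}$ the $(n-1)\times n$ matrix of the first rows, so the compressible-vector bound above (which only uses rows) shows $v$ is incompressible except with probability $e^{-cn}$. For the small ball step, conditioned on the support $S$ of $X_n$, we have $\langle X_n,v\rangle=\sum_{j\in S}\xi_jv_j$. To exploit the combinatorial randomness of $S$ as well, we use Tran's representation: pair coordinates via a random perfect matching, so that $S$ picks exactly one coordinate from each pair. Then $\langle X_n,v\rangle=\sum_{\text{pairs }(a,b)}(\eta\,\xi_a v_a+(1-\eta)\xi_b v_b)$ is a sum of independent terms given the matching. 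The CLCD of $v$, defined through differences $v_a\pm v_b$, controls the small ball via an Esseen-type inequality: $\textbf{P}(|\langle X_n,v\rangle|\le\varepsilon)\lesssim \varepsilon+1/\mathrm{CLCD}(v)+e^{-cn}$.

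The main obstacle is showing that $\mathrm{CLCD}(v)\ge e^{cn}$ with probability $1-e^{-cn}$. We plan to do this with a level-set net argument as in Tran and in Jain--Sah--Sawhney. For each dyadic level $D$, we build a net of vectors with CLCD about $D$ of cardinality roughly $(CD/\sqrt n)^n$. For each fixed $x$ in the net we use the small ball bound $\textbf{P}(|\langle R,x\rangle|\le \sqrt n/D)\lesssim \sqrt n/D$, tensorize over $n-1$ rows, and take a union bound. The delicate point is that the signed model mixes two sources of randomness, the signs and the support. Our CLCD must therefore be adapted so that both the level-set counting bound and the Esseen estimate hold simultaneously. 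To check this, we will condition on the support and use the sign randomness through the characteristic function $\prod_{j\in S}\cos(2\pi t v_j)$, averaged over $S$ chosen via matchings; this yields terms $\cos\cos$ over pairs, which are controlled by $\|t(v_a\pm v_b)\|_{\mathbb{R}/\mathbb{Z}}$. Combining this with the tail $e^{-cn}$ gives $\textbf{P}(\mathrm{dist}(X_n,H_n)\le\varepsilon)\le C\varepsilon+e^{-cn}$, and hence the theorem.
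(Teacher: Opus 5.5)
There is a genuine gap: the two halves of your sphere decomposition are on opposite sides of the matrix. Your compressible estimate controls $\inf_{x\in\mathrm{Comp}}\|M_nx\|_2$, whose coordinates $\langle R_i,x\rangle$ are independent over the rows. The distance step, in the form you actually use it (with $X_k=R_k$ the rows, so that the normal $v$ is independent of $X_n$ and satisfies $M_{n-1}v=0$), controls a different quantity, namely $\inf_{x\in\mathrm{Incomp}}\|x^{\top}M_n\|_2$. It rests on $x^{\top}M_n=\sum_k x_kR_k$ and $\operatorname{dist}(R_k,H_k)\le\|x^{\top}M_n\|_2/|x_k|$. For $\|M_nx\|_2$ the lemma instead involves distances between columns.

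These two estimates do not combine. When $s_n(M_n)$ is small, the right singular vector may be incompressible while the left one is compressible, and nothing you prove rules this out. What is missing is the bound $\mathbf{P}(\inf_{x\in\mathrm{Comp}}\|x^{\top}M_n\|_2\le c\sqrt n)\le e^{-cn}$. There the coordinates $(x^{\top}M_n)_j=\sum_i x_iM_{ij}$ are dependent across $j$ through the row constraints. This is exactly the column dependence you noticed, but you sidestepped it only for the random normal. Working with columns throughout does not escape it either: the normal to the span of the other columns is a left null vector, so you would again need a left-sided compressible estimate.

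The paper supplies exactly this in Lemma~\ref{LV}. It keeps only the first $n/4$ columns and conditions successively. Given the first $j-1$ columns, the entries of column $j$ are independent across rows, nonzero with conditional probability $p_i\in[1/3,2/3]$, and symmetric in sign. Paley--Zygmund then gives, uniformly, conditional probability at least $1/12$ that the $j$th coordinate $y_j$ of $x^{\top}M_n$ exceeds $1/3$ in absolute value. Iterating an exponential Markov bound over the columns yields $\mathbf{P}(\|x^{\top}M_n\|_2\le\sqrt n/36)\le e^{-n/400}$ for each fixed $x$, and a net over $\mathrm{Comp}(\delta,\rho)$ finishes.

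With that ingredient added, the rest of your plan is sound, though only sketched (your CLCD is never defined). It is in fact simpler than the paper's route. Your pair characteristic function is $\cos(\pi t(v_a+v_b))\cos(\pi t(v_a-v_b))$, and $\|2tv_a\|_{\mathbb{R}/\mathbb{Z}}\le\|t(v_a+v_b)\|_{\mathbb{R}/\mathbb{Z}}+\|t(v_a-v_b)\|_{\mathbb{R}/\mathbb{Z}}$. Together these give $|\mathbf{E}e^{2\pi i tW_v}|\le\exp(-c\,\mathrm{dist}(2tv,\mathbb{Z}^n)^2)$ for every matching. So the ordinary Rudelson--Vershynin LCD, together with incompressibility of the normal, already suffices. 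The paper instead conditions on the numbers of $\pm1$ entries and reduces to Tran's permutation statistic $W_{a,v}$. That is why it needs $\mathrm{D}(v)$, the CLCD, and the separate non-almost-constant step.
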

\begin{myrem}
For $\varepsilon=0$, Theorem \ref{main} implies that the singularity probability is exponentially small. In particular,
\begin{align*}
\textbf{P}\left(M_n \text { is singular}\right) \leq  e^{-c n}.
\end{align*}
\end{myrem}

\begin{myrem}
Theorem \ref{main} remains valid in the more general setting where each row has exactly $d$ non-zero entries, with $\min(d, n-d) \ge \lambda n$ for some fixed $\lambda>0$. We restrict attention to the case of even $n$ and $d = n/2$ only for simplicity of exposition.

Let $R_1, \cdots, R_n$ denote the independent rows of the matrix $M_n$. Direct calculation confirms that the covariance matrix of the row vectors is $\mathrm{Cov}(R_i)=\textbf{E}[R_i^{\top} R_i] = \frac d n I_n$. In contrast with i.i.d. signed matrices, whose row covariance is $I_n$, the present model has row covariance $\frac d nI_n$. Thus, by varying the number of non-zero entries, one can tune the covariance to different scalar multiples of the identity within this family.
\end{myrem}

The following theorem is the main ingredient in the proof of our main result. Before stating this theorem, we recall the definition of the Lévy concentration function.
\begin{mydef}
The Lévy concentration function of a real-valued random variable $X$ at scale $\varepsilon \ge 0$ is defined by
\begin{align*}
\mathcal{L}(X, \varepsilon) :=\sup_{\lambda\in\mathbb{R}}\textbf{P}(|X-\lambda|\le\varepsilon).
\end{align*}
\end{mydef}
\begin{Theorem}\label{theorem:distance}
Let $M_n$ be an $n \times n$ random matrix satisfying the assumptions of Theorem \ref{main}. Let $R_1, \dots, R_n$ denote its rows, and for each $j \in [n]$, let $H_j$ denote the subspace spanned by the remaining rows:
\begin{align*}
H_j := \mathrm{span}\{ R_i : i \neq j, i \in [n] \}.
\end{align*}
Let $v_j$ be a random unit vector orthogonal to $H_j$ and measurable with respect to the $\sigma$–field generated by $H_j$. Then there exist constants $C, c > 0$ such that for all $\varepsilon \ge 0$, the Lévy concentration function satisfies
\begin{align}\label{eq:Levy}
\mathcal{L}(\langle R_j, v_j \rangle, \varepsilon) \le C\varepsilon + 2e^{-cn}.
\end{align}
In particular, this implies the distance bound
\begin{align*}
\textbf{P} \left( \operatorname{dist}(R_j, H_j) \le \varepsilon \right) \le C\varepsilon + 2e^{-cn}.
\end{align*}
\end{Theorem}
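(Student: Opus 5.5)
Conditioning on $H_j$ reduces everything to a small ball estimate for a single row against a fixed normal vector. Since $R_j$ is independent of $H_j$ and $v_j$ is $H_j$-measurable, it suffices to prove two things. First, with probability at least $1-e^{-cn}$ over $H_j$, the vector $v_j$ has a ``good'' arithmetic structure. Second, for every deterministic unit vector $v$ with that structure, $\mathcal{L}(\langle R_j,v\rangle,\varepsilon)\le C\varepsilon+e^{-cn}$. The distance bound then follows from $\operatorname{dist}(R_j,H_j)\ge|\langle R_j,v_j\rangle|$.

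\textbf{Step 1: anti-concentration for fixed $v$.} Write $R_j=q\circ\xi$, with $q$ uniform on $\{0,1\}$-vectors having $n/2$ ones and $\xi$ an independent i.i.d.\ sign vector. Conditioning on $q$ gives $\langle R_j,v\rangle=\sum_{i\in S}\xi_i v_i$ with $S=\mathrm{supp}(q)$, $|S|=n/2$, which is a Rademacher sum in independent coordinates. The Rudelson--Vershynin small ball inequality via the LCD then gives
\begin{align*}
\mathcal{L}(\langle R_j,v\rangle,\varepsilon)\le C\Big(\frac{\varepsilon}{\|v_S\|_2}+\frac{1}{\mathrm{LCD}(v_S/\|v_S\|_2)}\Big)+e^{-cn}.
\end{align*}
The signs remove the combinatorial obstruction: the all-ones direction is no longer special, since $\sum\xi_i$ is not fixed. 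What is needed, therefore, is a lower bound on $\|v_S\|_2$ and on the LCD of $v$ restricted to a random half of its coordinates. I would follow Tran's CLCD idea and define the structure of $v$ through the distribution of the random restriction $v_S$. Concretely, call $v$ good if, with probability $1-e^{-cn}$ over $S$, we have $\|v_S\|_2\ge c$ and $\mathrm{LCD}(v_S/\|v_S\|_2)\ge e^{cn}$. If $v$ is incompressible, the first condition holds with probability $1-e^{-cn}$ by a Chernoff or hypergeometric concentration argument, since $v$ has $\gtrsim n$ coordinates of size $\asymp n^{-1/2}$.

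\textbf{Step 2: compressible vectors.} Next I would show that, with probability $1-e^{-cn}$, every compressible or sparse unit vector $x$ satisfies $\|M_n^{\top}x\|_2\gtrsim\sqrt n$ restricted to the rows excluding $j$; the columns involved are those of the $(n-1)\times n$ matrix formed by the rows $R_i$, $i\ne j$. This rules out $v_j$ being compressible. The proof is the standard one: a single-vector small ball bound combined with tensorization and a net argument, using $\|M_n\|\le C\sqrt n$ with probability $1-e^{-cn}$. The one ingredient specific to this model is the single-vector bound, and it again follows from conditioning on supports and using the sign randomness, so that $\mathbf P(|\langle R_i,x\rangle|\le c)\le 1-c'$.

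\textbf{Step 3: incompressible vectors with small LCD (main obstacle).} The remaining task is to exclude incompressible $v_j$ whose random restrictions have small LCD, level by level in $D\in[c\sqrt n,e^{cn}]$. For each level $D$, I would build a net of size roughly $(CD/\sqrt n)^n$ for vectors whose CLCD is about $D$. For any fixed $x$ at that level, Step 1 gives $\mathbf P(|\langle R_i,x\rangle|\le\sqrt n/D)\lesssim 1/D$ for each row. Tensorizing over the $n-1$ independent rows yields $\mathbf P(\|M x\|_2\le n/D)\le (C/D)^{n-1}$, and this beats the net cardinality, so a union bound closes the level. The delicate part is that the LCD is defined through the random set $S$. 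The net must therefore control the combinatorial LCD in a way that is stable under perturbation and uniform over most supports. I would handle this by adapting Tran's CLCD: define $D(v)$ as the smallest $\theta$ for which $\mathbf P_S\big(\mathrm{dist}(\theta v_S,\mathbb Z^S)<\alpha\sqrt n\big)$ exceeds a threshold. Passing from ``$\theta v_S$ close to the lattice for many $S$'' to a net of bounded size, via counting integer points, is where I expect the main technical work to lie.
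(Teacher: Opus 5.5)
\textbf{Verdict.} Your route is viable and different from the paper's, but Step 3, which you yourself flag as the main work, is not carried out, and the parts of it you do state contain errors.

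\textbf{What is sound.} Steps 1 and 2 are fine. Conditioning on the support and using the independent signs is legitimate. You are right that the signs make the constant direction harmless, so only compressible normals need separate treatment. (There you want $\|M'x\|_2$ for the $(n-1)\times n$ matrix $M'$ of the rows $R_i$, $i\neq j$, not $M_n^{\top}x$.)

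\textbf{The gap in Step 3.}
\begin{enumerate}
\item Your $D(v)$ is degenerate as written. For every $\theta<\alpha\sqrt n$ and every $S$, $\operatorname{dist}(\theta v_S,\mathbb{Z}^S)\le\|\theta v_S\|_2<\alpha\sqrt n$, so the infimum is $0$. You need the form $\min(\gamma\|\theta v_S\|_2,\alpha\sqrt n)$; this is also what gives the starting level $D\gtrsim\sqrt n$ for incompressible vectors.
\item Failing to be ``good'' only gives, for many $S$, some $\theta_S$ depending on $S$ and on the normalization $\|v_S\|_2$. Getting one $\theta$ that works for a non-negligible fraction of supports needs a pigeonhole over a grid of $\theta$'s. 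That costs a factor of order $e^{cn}$ in your threshold.
\item The fixed-vector estimate is off by $\sqrt n$. At scale $\sqrt n/D$, Step 1 gives $\lesssim\sqrt n/D$, not $1/D$. Tensorization then yields $(C\sqrt n/D)^{n-1}$, and against a net of size $(CD/\sqrt n)^n$ the product is $C^{2n-1}D/\sqrt n$, so the union bound does not close. You need the device of Lemma~\ref{lemma:Invertibility_on_a_level_set}: take scale $c\sqrt n/D$ with $c$ small relative to the constants, and a net of mesh $\mu\sqrt n/D$ with $\mu\ll c$ whose exponential size does not depend on $\mu$.
\item You give no net at all for the level sets of a structure that holds only on a random fraction of supports. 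That net is the actual content of the step.
\end{enumerate}

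\textbf{The missing observation.} Random restriction creates no new arithmetic structure, so no new LCD is needed. Set $f_i(\theta)=\operatorname{dist}(\theta v_i,\mathbb{Z})^2\in[0,1/4]$. For each fixed $\theta$, Hoeffding's inequality for sampling without replacement shows that $\operatorname{dist}(\theta v_S,\mathbb{Z}^S)^2=\sum_{i\in S}f_i(\theta)$ lies within $O(\sqrt c\,n)$ of $\frac12\operatorname{dist}(\theta v,\mathbb{Z}^n)^2$, except with probability $2e^{-cn}$. So the corrected version of your quantity is comparable to the ordinary LCD of $v$; the reverse comparison, uniformly in $\theta\le e^{cn}$, follows from a union bound over a grid. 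For the ordinary LCD, Rudelson--Vershynin's level-set nets of size $(CD/\sqrt n)^n$ are available.

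\textbf{A cleaner alternative.} You can avoid conditioning on $S$ altogether. Average over the signs, then apply negative association of the indicators $\mathbf{1}_{\{i\in S\}}$ to the nonincreasing functions $\delta\mapsto|\cos(sv_i)|^{\delta}$. This gives
\[
|\mathbf{E}e^{isW_v}|\le\mathbf{E}_S\prod_{i\in S}|\cos(sv_i)|\le\prod_{i=1}^n\frac{1+|\cos(sv_i)|}{2}\le\exp\big(-\operatorname{dist}(sv/\pi,\mathbb{Z}^n)^2\big).
\]
This is exactly the input to the Esseen-based proof of the Rudelson--Vershynin small ball theorem. Hence, for unit $v$,
\[
\mathcal{L}(W_v,\varepsilon)\lesssim_\gamma\varepsilon+1/\mathrm{LCD}_{\alpha,\gamma}(v)+e^{-\alpha^2},
\]
and the rest of their argument goes through essentially unchanged.

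\textbf{Comparison with the paper.} The paper conditions on the numbers $(p,q)$ of $+1$ and $-1$ entries rather than on the support. This turns $W_v$ into Tran's permutation statistic (Theorem~\ref{LOT} via Lemma~\ref{CCLCD}). It then imports Tran's CLCD of the difference vector $\mathrm{D}(v)$, together with his stability lemma and level-set net (Lemmas~\ref{lemma:clcd_stability} and~\ref{lemma:discretization_level_sets}). That buys a direct reuse of fully worked-out machinery. The price is that $\mathrm{D}(v)$ is blind to constant vectors, so the paper must separately show the random normal is not almost-constant (Proposition~\ref{prop:Random normal is non-almost-constant}). Your route, once completed, shows that with independent signs the model behaves like an i.i.d.\ model from the LCD point of view, and replaces that step by excluding compressible vectors.
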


We now deduce Theorem \ref{main} from Theorem \ref{theorem:distance}. As in \cite{MR2407948}, the first step is to decompose the unit sphere into compressible and incompressible vectors.
\begin{mydef}[Compressible and incompressible vectors]
Fix $\delta, \rho \in (0, 1)$. A vector $x \in \mathbb{R}^n$ is called \textit{sparse} if $|\operatorname{supp}(x)| \leq \delta n$. A vector $x \in \mathbb{S}^{n-1}$ is called \textit{compressible} if $x$ is within Euclidean distance $\rho$ from the set of all sparse vectors. A vector $x \in \mathbb{S}^{n-1}$ is called \textit{incompressible} if it is not compressible. The sets of sparse, compressible, and incompressible vectors will be denoted by $\operatorname{Sparse}, \operatorname{Comp}$ and $\operatorname{Incomp}$ respectively.
\end{mydef}

\noindent Using the decomposition of the unit sphere $\mathbb{S}^{n-1} = \operatorname{Comp}(\delta, \rho) \cup \operatorname{Incomp}(\delta, \rho)$, we break the invertibility problem into two subproblems, for compressible and incompressible vectors:
\begin{align*}
\textbf{P} \left( s_n(M_n) \le \varepsilon n^{-1/2} \right) \le \textbf{P} \left( \inf_{x \in \operatorname{Comp}(\delta, \rho)} \|x^{\top}M_n\|_2 \le \varepsilon n^{-1/2} \right)+ \textbf{P} \left( \inf_{x \in \operatorname{Incomp}(\delta, \rho)} \|x^{\top}M_n\|_2 \le \varepsilon n^{-1/2} \right).
\end{align*}
The compressible part admits an exponentially small bound. For the incompressible vectors, we use a version of the “invertibility via distance” bound from \cite{MR2407948}, which holds for any $n\times n$ random matrix $M_n$:
\begin{align*}
\textbf{P}\left(\inf _{x \in \operatorname{Incomp}(\delta, \rho)}\left\|x^{\top}M_n\right\|_2 \leq \varepsilon \frac{\rho}{\sqrt{n}}\right) \leq \frac{1}{\delta n} \sum_{k=1}^n \textbf{P}\left(\operatorname{dist}\left(R_k, H_k\right) \leq \varepsilon\right) ,
\end{align*}
where $R_1, \ldots, R_n$ are the row vectors of $M_n$, and $H_k$ is the subspace spanned by all rows except the $k$-th one.
\begin{myrem}
On the event $s_n(M_n)\le \varepsilon n^{-1/2}$, there exists $v\in\mathbb S^{n-1}$ such that $\|M_n v\|_2\le \varepsilon n^{-1/2}$. Since $M_n$ and its transpose have the same singular values, this is equivalent to the existence $x \in \mathbb{S}^{n-1}$ such that $\|x^\top M_n\|_2 \leq \varepsilon n^{-1/2}$. We use this left-sided formulation because the rows of $M_n$ are independent.
\end{myrem}
Thus, proving distance-type theorems, such as Theorem \ref{theorem:distance}, is crucial for studying the problem of smallest singular value of random matrices. In \cite{MR2407948}, Rudelson and Vershynin obtained a result analogous to Theorem \ref{theorem:distance} under the assumption that the entries are independent and identically distributed. They introduced the Least Common Denominator (LCD) to measure how close a suitably scaled vector is to the integer lattice. Using the LCD, they estimated the Lévy concentration function.

In the dependent setting where $R_j$ contains $n/2$ zeros and $n/2$ ones, in \cite{tran2020smallestsingularvaluerandom}, Tran developed a combinatorial version of the least common denominator (CLCD) to estimate $\mathcal{L}(\langle R_j, v_j \rangle, \varepsilon)$ and handle the dependence among the coordinates. In this paper, we extend the CLCD framework to the present signed setting.

\begin{mydef}[{Combinatorial Least Common Denominator, \cite[Definition 1.4]{tran2020smallestsingularvaluerandom}}]
For a vector $v \in \mathbb{R}^n$, and parameters $\alpha, \gamma>0$, define

\begin{align*}
    \operatorname{CLCD}_{\alpha, \gamma}(v):=\inf \left\{\theta>0: \operatorname{dist}\left(\theta \mathrm{D}(v), \mathbb{Z}^{\binom{n}{2}}\right)<\min (\gamma\|\theta  \mathrm{D}(v)\|_2, \alpha)\right\} .
\end{align*}
Here by $\mathrm{D}(v)$ we denote the vector in $\mathbb{R}^{\binom{n}{2}}$ whose $(i, j)$-coordinate is $v_i-v_j$, for $1 \leq i<j \leq n$.
\end{mydef}

\begin{myrem}
A key ingredient in Tran's work \cite{tran2020smallestsingularvaluerandom} is a method based on combinatorial statistics. Specifically, to exploit the analytic framework of Hal\'{a}sz \cite{MR494478} and Roos \cite{MR4904157}, Tran introduced the random sum
\begin{align*}
    W_{v} := \eta_1 v_1 + \dots + \eta_n v_n,
\end{align*}
where the vector $(\eta_1, \dots, \eta_n)^{\top}$ is chosen uniformly from $\{0, 1\}^n$ subject to the strict linear constraint $\sum_{i=1}^n \eta_i = n/2$. However, in our model, the sum of the number of 1 and -1 is fixed, but the number of 1 and -1 itself is not fixed. Consequently, we necessitate the construction of a new form of $W_{v}$ adapted to this structure, requiring us to re-establish its connection with the arithmetic structure of the vector.
\end{myrem}
In our model, we construct the random sum defined as:
\begin{align*}
    W_{v} := \xi_1 v_1 + \dots + \xi_n v_n.
\end{align*}
where the vector $(\xi_1, \dots, \xi_n)^{\top}$ is chosen uniformly from the set of vectors in $\{-1,0, 1\}^n$ satisfying the constraint $\sum_{i=1}^n |\xi_i| = n/2$.
Let $S\subset\{1,\dots,n\}$ be a uniformly random subset with $|S|=n/2$, and let $\sigma_1,\dots,\sigma_n$ be independent Rademacher random variables, independent of $S$, so that $$\textbf{P}(\sigma_i = 1) = \textbf{P}(\sigma_i = -1) = 1/2,\qquad i\in[n].$$ 

Then
$(\xi_1,\dots,\xi_n)^\top \stackrel d= (\sigma_1\mathbf 1_{\{1\in S\}},\dots,\sigma_n\mathbf 1_{\{n\in S\}})^\top,$
which implies that
\begin{align*}
W_v \stackrel d= \sum_{i=1}^n \sigma_i v_i\,\mathbf 1_{\{i\in S\}}.
\end{align*}

Therefore, the randomness of $W_v$ in our model is more involved, since it comes not only from the random signs but also from the random choice of the support, which makes the estimation of small ball probabilities substantially more delicate. Inspired by the work of Tran \cite{tran2020smallestsingularvaluerandom}, we establish the following relation between the CLCD of $v$ and the anti-concentration properties of $W_v$. This may be viewed as a variant of the Littlewood--Offord theory.

\begin{Theorem}\label{LOT}
For any $b>0$ and $\gamma \in(0,1)$ there exists $C>0$ depending only on $b$ and $\gamma$ for which the following holds. Let $v \in \mathbb{R}^n$ such that $\|\mathrm{D}(v)\|_2 \geq b \sqrt{n}$. Then for every $\alpha>0$ and $\varepsilon \geq 0$, we have

\begin{align*}
\mathcal{L}\left(W_{v}, \varepsilon\right) \leq C \varepsilon+\frac{C}{\operatorname{CLCD}_{\alpha, \gamma}(v)}+C e^{-2 \alpha^2 / n}.
\end{align*}
\end{Theorem}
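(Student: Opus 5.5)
The plan is to use Esseen's inequality together with a decoupling of the random support $S$ through a random perfect matching. The decoupling makes the summands conditionally independent, and each pair then produces exactly the differences $v_i-v_j$ that enter $\mathrm{D}(v)$.

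\emph{Step 1 (matching representation).} Pick a uniformly random perfect matching $\pi=\{\{i_1,j_1\},\dots,\{i_{n/2},j_{n/2}\}\}$ of $[n]$. Independently, from each pair choose one endpoint with probability $1/2$, and let $S$ be the set of chosen endpoints. The law of $S$ is invariant under permutations of $[n]$ and $|S|=n/2$ always, so $S$ is a uniform $n/2$-subset. Consequently, conditionally on $\pi$,
\begin{align*}
W_v=\sum_{k=1}^{n/2} Y_k ,
\end{align*}
where the $Y_k$ are independent and $Y_k$ is uniform on $\{\pm v_{i_k},\pm v_{j_k}\}$. Its characteristic function is $\tfrac12(\cos(tv_{i_k})+\cos(tv_{j_k}))$, and therefore
\begin{align*}
\bigl|\textbf{E}[e^{itY_k}\mid \pi]\bigr| = \bigl|\cos\tfrac{t(v_{i_k}+v_{j_k})}{2}\bigr|\,\bigl|\cos\tfrac{t(v_{i_k}-v_{j_k})}{2}\bigr|\le \exp\Bigl(-c\,\operatorname{dist}\bigl(\tfrac{t(v_{i_k}-v_{j_k})}{2\pi},\mathbb Z\bigr)^2\Bigr).
\end{align*}
Note that the random signs $\sigma_i$ are what kill the sum $v_{i_k}+v_{j_k}$ here and leave only the difference.

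\emph{Step 2 (Esseen).} Esseen's inequality gives
\begin{align*}
\mathcal L(W_v,\varepsilon)\le C\int_{-1}^{1}\bigl|\textbf{E}e^{itW_v/\varepsilon}\bigr|\,dt\le C\int_{-1}^1 \textbf{E}_\pi \exp\Bigl(-c\sum_{k}\operatorname{dist}\bigl(\theta\,(v_{i_k}-v_{j_k}),\mathbb Z\bigr)^2\Bigr)dt,
\end{align*}
where $\theta=t/(2\pi\varepsilon)$. It suffices to prove the bound with $\varepsilon$ replaced by $\varepsilon'=\max(\varepsilon, c_0/\operatorname{CLCD}_{\alpha,\gamma}(v))$, since $\mathcal L(W_v,\varepsilon)\le\mathcal L(W_v,\varepsilon')$. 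With this choice every $\theta$ in the range of integration satisfies $\theta<\operatorname{CLCD}_{\alpha,\gamma}(v)$, so by the definition of CLCD,
\begin{align*}
F(\theta):=\operatorname{dist}(\theta\mathrm D(v),\mathbb Z^{\binom n2})^2\ge \min\bigl(\gamma^2\theta^2\|\mathrm D(v)\|_2^2,\ \alpha^2\bigr).
\end{align*}

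\emph{Step 3 (passing from a random matching to all pairs; the main obstacle).} For a fixed pair $\{i,j\}$, the probability that it lies in $\pi$ is $1/(n-1)$. Hence the mean of $X_\pi=\sum_k\operatorname{dist}(\theta(v_{i_k}-v_{j_k}),\mathbb Z)^2$ is $F(\theta)/(n-1)$. What is needed is a bound of the form
\begin{align*}
\textbf{E}_\pi e^{-cX_\pi}\le e^{-c'F(\theta)/n}+(\text{negligible}).
\end{align*}
The summands of $X_\pi$ lie in $[0,1/4]$. I would attempt a concentration inequality for linear statistics of a uniform random matching, with exchangeable-pair/Stein or martingale (Azuma–Hoeffding) bounds revealing the matching pair by pair, to show $\textbf{P}(X_\pi\le \tfrac12\textbf{E}X_\pi)\le e^{-c\textbf{E}X_\pi}$. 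Only the lower tail is needed. An alternative I would try if that fails is to follow Tran's argument via Hal\'asz-type level sets, applied to the $(n-1)$ disjoint perfect matchings of a round-robin decomposition of $K_n$. Averaging over these matchings and applying Jensen/H\"older would also let one replace $X_\pi$ by $F(\theta)/(n-1)$ up to constants. This is the step I expect to be genuinely delicate.

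\emph{Step 4 (integration).} Assuming Step 3, the integrand is at most
\begin{align*}
\exp\bigl(-c\gamma^2\theta^2\|\mathrm D(v)\|_2^2/n\bigr)+\exp(-c\alpha^2/n).
\end{align*}
Using $\|\mathrm D(v)\|_2\ge b\sqrt n$, the first term is at most $\exp(-c\gamma^2b^2t^2/(4\pi^2\varepsilon'^2))$. Its integral over $t$ is $O_{b,\gamma}(\varepsilon')$, which gives $C\varepsilon+C/\operatorname{CLCD}_{\alpha,\gamma}(v)$. The second term integrates to $2e^{-c\alpha^2/n}$. The constant in the exponent can be adjusted to match the stated $e^{-2\alpha^2/n}$ by tracking the normalisation in $|\cos(\pi x)|\le\exp(-2\operatorname{dist}(x,\mathbb Z)^2)$ and in the matching concentration. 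This gives the statement of Theorem \ref{LOT}.
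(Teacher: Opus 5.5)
Your Steps 1, 2 and 4 are sound. The matching construction does give a uniform $n/2$-subset, the signs bound each pair's characteristic function by $|\cos\frac{t(v_{i_k}-v_{j_k})}{2}|$, and the Esseen reduction with $\varepsilon'=\max(\varepsilon,c_0/\operatorname{CLCD}_{\alpha,\gamma}(v))$ is fine.

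\textbf{The gap: Step 3 is unproven.} This step carries the whole content of the theorem, and two of the three tools you name cannot deliver it.
\begin{itemize}
\item Jensen goes the wrong way: $\mathbf{E}_\pi e^{-cX_\pi}\ge e^{-c\,\mathbf{E}X_\pi}$ by convexity. Averaging over the $n-1$ round-robin matchings produces an arithmetic mean of exponentials, which Jensen (or AM--GM) bounds from below, not above.
\item Azuma--Hoeffding, with $n/2$ increments of size $O(1)$, gives only $\mathbf{P}(X_\pi\le\mu/2)\le e^{-c\mu^2/n}$, where $\mu=F(\theta)/(n-1)$. For $v$ with $\|\mathrm{D}(v)\|_2\asymp\sqrt n$, the only available lower bound in the Gaussian regime is $\mu\gtrsim\gamma^2b^2\theta^2$. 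Then $\int_{-1}^{1}\exp(-c\gamma^4b^4t^4/(\varepsilon'^4 n))\,dt$ is of order $\varepsilon' n^{1/4}$, not $\varepsilon'$, so you lose a polynomial factor.
\end{itemize}

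What you need is a multiplicative lower-tail bound $\mathbf{P}(X_\pi\le\mu/2)\le e^{-c\mu}$, uniform over edge weights in $[0,1/4]$. This is true. One way to get it: re-pair two uniformly chosen edges of $\pi$ to form an exchangeable pair $(\pi,\pi')$. One checks that $\mathbf{E}[X_\pi-X_{\pi'}\mid\pi]=\lambda(X_\pi-\mu)$ with $\lambda=\frac{(n-1)/2}{\binom{n/2}{2}}$. One also checks that $\frac{1}{2\lambda}\mathbf{E}[(X_\pi-X_{\pi'})^2\mid\pi]\lesssim X_\pi+\mu$. Chatterjee's concentration inequality for exchangeable pairs then yields the lower tail. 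But this argument is the crux, and it is absent from your proposal.

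\textbf{The exponent constant.} Even with that lemma, your route gives $Ce^{-c\alpha^2/n}$ for some small absolute $c$, not the stated $Ce^{-2\alpha^2/n}$. Rescaling $\alpha$ cannot fix this. $\operatorname{CLCD}_{\alpha,\gamma}(v)$ is nonincreasing in $\alpha$, so enlarging $\alpha$ to improve the exponential term weakens the $C/\operatorname{CLCD}_{\alpha,\gamma}(v)$ term. Your remark that the constant ``can be adjusted'' is therefore unjustified. (For Section~\ref{sec4} any $c>0$ would do, but the statement as written is not reached.)

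\textbf{How the paper proceeds.} The paper avoids all of this by conditioning on the numbers $p,q$ of entries equal to $\pm1$. Given $(p,q)$, $\xi$ is a uniform permutation of $a^{(p,q)}$, so $W_v$ is a mixture of Tran's statistics $W_{a,v}$. Since $\mathrm{D}(a)\otimes\mathrm{D}(v)$ consists of $n^2/4$ copies of $\mathrm{D}(v)$, $pq\le n^2/16$ copies of $2\mathrm{D}(v)$, and zeros, two facts follow:
\begin{itemize}
\item $\|\mathrm{D}(a)\otimes\mathrm{D}(v)\|_2\ge\frac n2\|\mathrm{D}(v)\|_2$;
\item $\operatorname{CLCD}^a_{\alpha n/2,\gamma/\sqrt2}(v)\ge\operatorname{CLCD}_{\alpha,\gamma}(v)$.
\end{itemize}
Lemma~\ref{CCLCD} with $L=\alpha n/2$ then gives exactly $e^{-8L^2/n^3}=e^{-2\alpha^2/n}$.

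Once the matching lemma is supplied, your decoupling would be a self-contained alternative that uses the signs directly rather than Tran's Hal\'asz-type theorem. The cost is that lemma and the sharp exponent.
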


Note that the requirement $\|\mathrm{D}(v)\|_2 \geq b \sqrt{n}$ holds for a ``typical" vector $v$.  Given this theorem, it is natural to prove Theorem \ref{main} using the geometric approach of Rudelson and Vershynin \cite{MR2407948}.

\subsection{Notation}
The inner product in $\mathbb{R}^n$ is denoted by $\langle \cdot, \cdot \rangle$, and the Euclidean norm is denoted by $\|\cdot\|_2$. The Euclidean unit ball and sphere in $\mathbb{R}^n$ are denoted by $B_2^n$ and $\mathbb{S}^{n-1}$, respectively. Given $S, P \subset \mathbb{R}^n$, we use the standard notation $N(S, P)$ for the least number of translates of $P$ needed to cover $S$. We write $[n]$ for the set $\{1, 2, \ldots, n\}$, and $\binom{X}{k}$ is the family of all $k$-element subsets of a set $X$. For any vector $v \in \mathbb{R}^n$ and any subset $I \subseteq [n]$, let $v_I$ denote the restriction of $v$ to the coordinates indexed by $I$.

\noindent Suppose that $A = (a_{ij})_{1 \le i \le m, 1 \le j \le n}$ is an $m \times n$ real matrix, and write $A(i,j):=a_{ij}$. The operator norm of $A$ is defined as $\|A\| := \sup_{v \in \mathbb{S}^{n-1}} \|Av\|_2$, while the Hilbert-Schmidt norm of $A$ is given by $\|A\|_{\text{HS}} := \left(\sum_{i,j} a_{ij}^2\right)^{1/2}$. We also define the Orlicz $\psi_2$-norm (sub-Gaussian norm) and $\psi_1$-norm (subexponential norm) of a real-valued random variable $X$ by $\|X\|_{\psi_2} := \inf\left\{ t>0 : \textbf{E}\exp\left(\frac{X^2}{t^2}\right) \le 2 \right\}$, $\|X\|_{\psi_1} := \inf\left\{ t>0 : \textbf{E}\exp\left(\frac{|X|}{t}\right) \le 2 \right\}.$

\noindent Throughout this paper, we make extensive use of asymptotic notation. For functions \(f, g\), \(f = O_\alpha(g)\) (or \(f \lesssim_\alpha g\)) means \(f \leq C_\alpha g\) for some constant \(C_\alpha\) depending only on \(\alpha\); \(f = \Omega_\alpha(g)\) (or \(f \gtrsim_\alpha g\)) means \(f \geq c_\alpha g\) for some constant \(c_\alpha > 0\) depending only on \(\alpha\). For parameters \(\varepsilon, \delta\), the relation \(\varepsilon \ll_\alpha \delta\) means that \(\varepsilon\) is smaller than \(c_\alpha(\delta)\) for a sufficiently decaying function $c_\alpha$ depending on $\alpha$. All logarithms are natural unless specified otherwise, and floor and ceiling functions are omitted when their inclusion does not affect the argument.

\noindent We will also use $C, c, c_1, c_2$, etc. to denote unspecified positive constants whose values may differ at each occurrence and are understood to be absolute unless explicitly stated otherwise.

\subsection{Organization}
The remainder of this paper is organized as follows. In Section \ref{sec2}, we present some preliminaries; the new results are Lemma \ref{le23}, Proposition \ref{prop:first_singular_value}, and Lemma \ref{FFV}. In Section \ref{sec3}, we introduce a variant of the Littlewood--Offord theory based on the CLCD. Then we prove Theorem \ref{theorem:distance} in Section \ref{sec4} and prove Theorem \ref{main} in Section \ref{sec5}.

\section{Preliminaries}\label{sec2}
In this section, we introduce some lemmas that will be used to prove our main result. We recall a concentration inequality for sums of independent, mean-zero, subexponential random variables (see, e.g., \cite[Theorem 2.8.1]{MR3837109}).
\begin{Theorem}[Bernstein inequality]\label{Bernstein} Let $Y_1, \ldots, Y_m$ be independent, mean-zero, subexponential random variables. Then, for every $t \geq 0$, we have
\begin{align*}
\textbf{P}\left(\left|\sum_{i=1}^m Y_i\right| \geq t\right) \leq 2 \exp \left[-c_1 \min \left(\frac{t^2}{\sum_{i=1}^m\left\|Y_i\right\|_{\psi_1}^2}, \frac{t}{\max _i\left\|Y_i\right\|_{\psi_1}}\right)\right],
\end{align*}
where $c_1>0$ is an absolute constant.
\end{Theorem}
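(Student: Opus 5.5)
This is the standard Bernstein inequality, and I would prove it by the Chernoff method with a bound on the moment generating function of each summand. By homogeneity (rescaling all $Y_i$ by a common constant leaves the ratios $t^2/\sum_i\|Y_i\|_{\psi_1}^2$ and $t/\max_i\|Y_i\|_{\psi_1}$ unchanged after also rescaling $t$), no normalization is actually needed. Write $K_i:=\|Y_i\|_{\psi_1}$, $\sigma^2:=\sum_i K_i^2$ and $K:=\max_i K_i$.

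\textbf{Step 1 (MGF bound).} I would show that for a mean-zero $Y$ with $\|Y\|_{\psi_1}\le K_0$ we have
\begin{align*}
\textbf{E}\, e^{\lambda Y}\le \exp\left(C\lambda^2K_0^2\right) \qquad \text{whenever } |\lambda|\le \frac{c}{K_0}.
\end{align*}
Expand $e^{\lambda Y}=1+\lambda Y+\sum_{p\ge2}\lambda^pY^p/p!$. The linear term vanishes because $\textbf{E} Y=0$. From $\textbf{E} e^{|Y|/K_0}\le 2$ we get the moment bound $\textbf{E}|Y|^p\le 2\,p!\,K_0^p$, since $x^p/p!\le e^{x}$ for $x\ge0$. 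Hence
\begin{align*}
\textbf{E}\, e^{\lambda Y}\le 1+2\sum_{p\ge2}(|\lambda| K_0)^p\le 1+4\lambda^2K_0^2 \qquad \text{for } |\lambda| K_0\le \tfrac12,
\end{align*}
and then $1+x\le e^x$ gives the claim.

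\textbf{Step 2 (Chernoff).} By independence, for $0<\lambda\le c/K$,
\begin{align*}
\textbf{P}\left(\sum_i Y_i\ge t\right)\le e^{-\lambda t}\prod_i \textbf{E}\, e^{\lambda Y_i}\le \exp\left(-\lambda t+C\lambda^2\sigma^2\right).
\end{align*}
Optimize by taking $\lambda=\min\bigl(t/(2C\sigma^2),\,c/K\bigr)$.
\begin{itemize}
\item In the first case, $\lambda=t/(2C\sigma^2)$, the exponent equals $-t^2/(4C\sigma^2)$.
\item In the second case, $\lambda=c/K\le t/(2C\sigma^2)$, so $C\lambda^2\sigma^2\le \lambda t/2$ and the exponent is at most $-ct/(2K)$.
\end{itemize}
Together these give the bound $\exp\left[-c_1\min\left(t^2/\sigma^2,\,t/K\right)\right]$. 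Applying the same argument to $-Y_i$ handles the lower tail, and a union bound yields the factor $2$.

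\textbf{Main obstacle.} The only delicate point is Step 1: it must use the mean-zero assumption to kill the linear term, and it must derive the moment growth $p!\,K_0^p$ from the $\psi_1$ condition with constants uniform in $p$. Both were handled above via $x^p/p!\le e^x$. Everything else is routine bookkeeping of absolute constants.
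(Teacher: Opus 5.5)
Your proposal is correct and is essentially the standard Chernoff-plus-MGF proof found in the reference the paper cites for this result (Vershynin, \emph{High-dimensional probability}, Theorem 2.8.1); the paper itself gives no proof and simply quotes that theorem. Your only difference is in Step 1, where you derive $\textbf{E}\, e^{\lambda Y}\le \exp(4\lambda^2K_0^2)$ for $|\lambda|K_0\le 1/2$ directly from $\textbf{E}|Y|^p\le 2\,p!\,K_0^p$, making the argument self-contained rather than relying on the book's proposition on equivalent characterizations of subexponential variables.
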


\begin{lemma}\label{le23}
Let $n \in \mathbb{N}$ be even and let \(M\) be an \(m\times n\) random matrix, $1\le m \le n$, whose rows are independent and uniformly distributed over the set of \(\{-1,0,1\}\)-vectors with exactly \(n/2\) zero coordinates. Fix  $v \in \mathbb{S}^{n-1}$. Then, for every $t \geq 0$, 
\begin{align*}
\textbf{P}\left(\left|\|M v\|_2^2-\frac{m}{2} \right| \geq t\right) \leq 2 \exp \left[-c_2 \min \left(\frac{t^2}{ n}, t\right)\right],   
\end{align*}
where $c_2>0$ is an absolute constant.
\end{lemma}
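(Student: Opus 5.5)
The plan is to write $\|Mv\|_2^2=\sum_{i=1}^m \langle R_i,v\rangle^2$ with independent rows $R_i$, center each term, and apply Bernstein's inequality (Theorem \ref{Bernstein}) to $Y_i:=\langle R_i,v\rangle^2-\textbf{E}\langle R_i,v\rangle^2$. First I compute the mean. Using the representation $R_i\stackrel d=(\sigma_1\mathbf 1_{\{1\in S\}},\dots,\sigma_n\mathbf 1_{\{n\in S\}})$ with independent Rademacher signs $\sigma_k$ independent of $S$, we get $\textbf{E}[\xi_k\xi_l]=0$ for $k\neq l$ (condition on $S$ and use independence of the signs), and $\textbf{E}\xi_k^2=\textbf{P}(k\in S)=1/2$. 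Hence $\textbf{E}\langle R_i,v\rangle^2=\frac12\|v\|_2^2=\frac12$, and $\textbf{E}\|Mv\|_2^2=m/2$, which gives the centering in the statement.

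Second, I need $\|Y_i\|_{\psi_1}\le C$ with $C$ absolute. Since $\|X^2-\textbf{E}X^2\|_{\psi_1}\lesssim\|X^2\|_{\psi_1}=\|X\|_{\psi_2}^2$, it suffices to show that $X=\langle R_i,v\rangle$ is sub-Gaussian with absolute constant. Condition on the support $S$: given $S$, $X=\sum_{k\in S}\sigma_k v_k$ is a Rademacher sum, so by Hoeffding $\textbf{E}[e^{\lambda X}\mid S]\le \exp(\lambda^2\|v_S\|_2^2/2)\le \exp(\lambda^2/2)$ since $\|v\|_2=1$. Averaging over $S$ preserves this bound, so $\|X\|_{\psi_2}\le C$ uniformly in $v$ and $S$. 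This is the key point where the signed model is easier than Tran's $\{0,1\}$ model: the random signs absorb all dependence coming from the support constraint, so no negative-association or sampling-without-replacement argument is needed.

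Finally, applying Theorem \ref{Bernstein} with $\sum_i\|Y_i\|_{\psi_1}^2\le C^2 m\le C^2 n$ and $\max_i\|Y_i\|_{\psi_1}\le C$ yields
\begin{align*}
\textbf{P}\left(\left|\|Mv\|_2^2-\tfrac m2\right|\ge t\right)\le 2\exp\left[-c_1\min\left(\frac{t^2}{C^2 n},\frac tC\right)\right],
\end{align*}
and absorbing constants gives $c_2$. The only mildly delicate step is the uniform sub-Gaussian bound, which conditioning on $S$ handles. Using $m\le n$ to replace $m$ by $n$ in the denominator only weakens the bound, which is harmless.
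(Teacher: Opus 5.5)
Your proposal is correct and follows essentially the same route as the paper. Both arguments compute $\textbf{E}\langle R_i,v\rangle^2=1/2$ via the sign--support representation, obtain the sub-Gaussian bound $\textbf{E}e^{\lambda X}\le e^{\lambda^2/2}$ by conditioning on $S$ and using the Rademacher moment generating function bound, and then apply Bernstein's inequality to the centered squares, using $m\le n$.
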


\begin{proof}[Proof of Lemma \ref{le23}]
Fix an even integer $n\ge 2$. Since $M\stackrel{d}{=} Q\circ \Xi$, where $Q$ is an $m\times n$ random matrix with entries in $\{0,1\}$ whose rows are independent and uniformly distributed over vectors with exactly $n/2$ ones and $\Xi$ is an i.i.d.\ sign matrix independent of $Q$, each row vector $\xi=(\xi_1,\dots,\xi_n)\in\{-1,0,1\}^n$ of matrix $M$ has the same distribution as
\begin{align*}
(\xi_1,\dots,\xi_n) \stackrel d= (\sigma_1\mathbf 1_{\{1\in S\}},\dots,\sigma_n\mathbf 1_{\{n\in S\}}),
\end{align*}
where $\sigma_1,\dots,\sigma_n$ are i.i.d. Rademacher random variables, and the support set $S$ is uniformly distributed over all subsets of $[n]$ of cardinality $n/2$, and $S$ is independent of $\{\sigma_i\}_{i=1}^n$.
In particular, for any fixed $v\in\mathbb S^{n-1}$, the corresponding random sum can be written as
\begin{align*}
X:=\sum_{i=1}^n v_i\xi_i
\stackrel{d}=\sum_{i=1}^n v_i\sigma_i\mathbf 1_{\{i\in S\}}.
\end{align*}
Then for all $j \in[m]$, the random variables $X_j:=\left\langle M v, \boldsymbol{e}_j\right\rangle$ are i.i.d. copies of $X$. Thus
\begin{align*}
\|Mv\|_2^2=X_1^2+\ldots+X_m^2.
\end{align*}
Since $\mathbf E[\sigma_i]=0$ and $\mathbf 1_{\{i\in S\}}$ is independent of $\sigma_i$, we have
\begin{align*}
\mathbf E[\xi_i]
=\mathbf E[\sigma_i\mathbf 1_{\{i\in S\}}]
=\mathbf E[\sigma_i]\mathbf E[\mathbf 1_{\{i\in S\}}]
=0,
\qquad i\in[n].
\end{align*}
Therefore, $\mathbf E[X]=\sum_{i=1}^n v_i\mathbf E[\xi_i]=0.$

Moreover,
\begin{align*}
\mathbf E[X^2]
=\mathbf E\left[\left(\sum_{i=1}^n v_i\xi_i\right)^2\right] 
=\sum_{i=1}^n v_i^2\mathbf E[\xi_i^2]
+\sum_{i\neq j} v_iv_j\mathbf E[\xi_i\xi_j].
\end{align*}
Since $\xi_i^2=\sigma_i^2\mathbf 1_{\{i\in S\}}=\mathbf 1_{\{i\in S\}}$,
we obtain
\begin{align*}
\mathbf E[\xi_i^2]
=\mathbf E[\mathbf 1_{\{i\in S\}}]=\textbf P(i\in S)=\frac12.
\end{align*}
For $i\neq j$, since
\begin{align*}
\mathbf E[\xi_i\xi_j]
=\mathbf E[\sigma_i\sigma_j\mathbf 1_{\{i\in S\}}\mathbf 1_{\{j\in S\}}]
=\mathbf E[\sigma_i\sigma_j]\mathbf E[\mathbf 1_{\{i\in S\}}\mathbf 1_{\{j\in S\}}]
=\mathbf E[\sigma_i]\mathbf E[\sigma_j]\mathbf E[\mathbf 1_{\{i\in S\}}\mathbf 1_{\{j\in S\}}]
=0.
\end{align*}
Consequently,
\begin{align*}
\mathbf E[X^2]=\frac12\sum_{i=1}^n v_i^2=\frac12,
\end{align*}
where in the last step we used $\|v\|_2=1$.

We next show that $X$ is sub-Gaussian.
For every $\lambda\in\mathbb R$, we have
\begin{align*}
\mathbf E e^{\lambda X}
=\mathbf E_S\left[\mathbf E_\sigma \exp\left(\lambda\sum_{i=1}^n v_i\sigma_i \mathbf 1_{\{i\in S\}}\right)\right].
\end{align*}
Here and throughout, subscripts on expectation indicate the corresponding source of randomness. For each fixed realization of $S$, the random variables $\sigma_1,\dots,\sigma_n$ are independent, hence
\begin{align*}
\mathbf E_\sigma \exp\left(\lambda\sum_{i=1}^n v_i\sigma_i\mathbf 1_{\{i\in S\}}\right)
=\prod_{i=1}^n \mathbf E_{\sigma_i} e^{\lambda v_i \sigma_i\mathbf 1_{\{i\in S\}}}.
\end{align*}
Using the classical bound for the moment generating function of a Rademacher random variable,
\begin{align*}
\mathbf E_\sigma e^{\lambda v_i\sigma_i\mathbf 1_{\{i\in S\}}}
=\cosh(\lambda v_i \mathbf 1_{\{i\in S\}})
\le \exp\left(\frac{\lambda^2 v_i^2 (\mathbf 1_{\{i\in S\}})^2}{2}\right)
=\exp\left(\frac{\lambda^2 v_i^2 \mathbf 1_{\{i\in S\}}}{2}\right),
\end{align*}
we obtain
\begin{align*}
\mathbf E_\sigma \exp\left(\lambda\sum_{i=1}^n v_i \sigma_i\mathbf 1_{\{i\in S\}}\right)
&\le \prod_{i=1}^n \exp\left(\frac{\lambda^2 v_i^2 \mathbf 1_{\{i\in S\}}}{2}\right) \\
&=\exp\left(\frac{\lambda^2}{2}\sum_{i=1}^n v_i^2 \mathbf 1_{\{i\in S\}}\right) \\
&\le \exp\left(\frac{\lambda^2}{2}\sum_{i=1}^n v_i^2\right)
=\exp\left(\frac{\lambda^2}{2}\right).
\end{align*}

Taking expectation over $S$ yields $\mathbf E e^{\lambda X}\le e^{\lambda^2/2}.$
Thus $X$ is a mean-zero sub-Gaussian random variable. Furthermore, $X^2$ is a subexponential random variable, then by the definitions of sub-Gaussian and subexponential random variables, there exists a constant $K>0$ such that 
\begin{align*}
\|X^2-\textbf E X^2\|_{\psi_1}\le \|X^2\|_{\psi_1}+\|\mathbf{E}X^2\|_{\psi_1}\le K.
\end{align*}
Let $Y_j=X_j^2-\textbf{E} X_j^2$. Then $\{Y_j\}_{j=1}^m$ are independent, mean-zero, subexponential random variables. Since $m \leq n$, applying Theorem \ref{Bernstein}, we obtain
\begin{align*}
\textbf{P}\left(\left|\|M v\|_2^2-\frac{m}{2} \right| \geq t\right) \leq 2 \exp \left[-c_2 \min \left(\frac{t^2}{ n}, t\right)\right],   
\end{align*}
where $c_2$ is an absolute constant.
\end{proof}
Then we introduce the $\varepsilon$-net argument. The following is the volumetric estimate; for details, we can see \cite{milman1986asymptotic}.
\begin{lemma}[Volumetric estimate]\label{NONet}
For any $0<\varepsilon<1$ there exists an $\varepsilon$-net $\mathcal{N} \subset {\mathbb{S}}^{n-1}$ such that
\begin{align*}
|\mathcal{N}| \leq\left(1+\frac{2}{\varepsilon}\right)^n \leq\left(\frac{3}{\varepsilon}\right)^n .
\end{align*} 
\end{lemma}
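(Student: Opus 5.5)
The statement immediately above is the volumetric estimate, Lemma \ref{NONet}. I would prove it by the standard maximal-separated-set argument. I take $\mathcal N\subset\mathbb S^{n-1}$ to be a maximal $\varepsilon$-separated subset, meaning that $\|x-y\|_2>\varepsilon$ for all distinct $x,y\in\mathcal N$. Such a set exists, for instance by Zorn's lemma or greedily. It is finite, because the balls used below are disjoint subsets of a bounded region and each has positive volume.

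First, maximality makes $\mathcal N$ an $\varepsilon$-net. If some $z\in\mathbb S^{n-1}$ had $\operatorname{dist}(z,\mathcal N)>\varepsilon$, then $\mathcal N\cup\{z\}$ would still be $\varepsilon$-separated, which contradicts maximality.

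Second, I bound the cardinality by comparing volumes.
\begin{itemize}
\item Because $\mathcal N$ is $\varepsilon$-separated, the balls $x+\frac{\varepsilon}{2}B_2^n$ for $x\in\mathcal N$ are pairwise disjoint.
\item Each of these balls lies inside $(1+\frac{\varepsilon}{2})B_2^n$, since $\|x\|_2=1$.
\item Volume is additive on disjoint sets and scales as $\operatorname{vol}(rB_2^n)=r^n\operatorname{vol}(B_2^n)$.
\end{itemize}
Together these give
\begin{align*}
|\mathcal N|\left(\frac{\varepsilon}{2}\right)^n\operatorname{vol}(B_2^n)\le \left(1+\frac{\varepsilon}{2}\right)^n\operatorname{vol}(B_2^n).
\end{align*}
Dividing through yields $|\mathcal N|\le(1+\frac{2}{\varepsilon})^n$. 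For $\varepsilon<1$ we have $1+\frac{2}{\varepsilon}<\frac{1}{\varepsilon}+\frac{2}{\varepsilon}=\frac{3}{\varepsilon}$, which gives the second inequality.

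There is no real obstacle here. The one point to state explicitly is that the separation is measured with the strict inequality $>\varepsilon$. This is what makes the half-radius balls genuinely disjoint, so that the volume comparison is valid.
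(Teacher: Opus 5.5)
Your proof is correct. The paper does not prove this lemma itself but cites Milman--Schechtman, and your argument (a maximal $\varepsilon$-separated subset of $\mathbb{S}^{n-1}$, with the disjoint balls of radius $\varepsilon/2$ packed inside $(1+\frac{\varepsilon}{2})B_2^n$) is the standard volumetric proof found there.
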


Using $\varepsilon$-nets and Lemma \ref{le23}, we prove a basic bound on the largest singular value:

\begin{proposition}\label{prop:first_singular_value}
Let $n \in \mathbb{N}$ be even and let \(M\) be an \(m\times n\) random matrix, $1\le m \le n$, whose rows are independent and uniformly distributed over the set of \(\{-1,0,1\}\)-vectors with exactly \(n/2\) zero coordinates. Then for all $t \geq C_3$, we have
\begin{align*}
\textbf{P}\left(\|M\| \geq t \sqrt{n}\right) \leq 2 \exp \left(-c_3 t^2 n\right).
\end{align*} 
\end{proposition}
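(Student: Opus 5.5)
The plan is a standard $\varepsilon$-net argument combined with the concentration bound of Lemma \ref{le23}. Fix a $\tfrac12$-net $\mathcal N\subset\mathbb S^{n-1}$ given by Lemma \ref{NONet}, so that $|\mathcal N|\le 6^n$. Choose $x\in\mathbb S^{n-1}$ with $\|Mx\|_2=\|M\|$, and pick $y\in\mathcal N$ with $\|x-y\|_2\le\tfrac12$. Then
\begin{align*}
\|M\|=\|Mx\|_2\le \|My\|_2+\|M\|\,\|x-y\|_2\le \|My\|_2+\tfrac12\|M\|,
\end{align*}
so $\|M\|\le 2\max_{y\in\mathcal N}\|My\|_2$. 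Consequently
\begin{align*}
\textbf P\left(\|M\|\ge t\sqrt n\right)\le \sum_{y\in\mathcal N}\textbf P\left(\|My\|_2^2\ge \tfrac{t^2 n}{4}\right).
\end{align*}

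For each fixed $y\in\mathbb S^{n-1}$ I apply Lemma \ref{le23} with deviation $s:=\tfrac{t^2n}{4}-\tfrac m2$. Since $m\le n$, for $t\ge 2\sqrt2$ we have $s\ge \tfrac{t^2n}{8}$. In the regime $t\ge C_3$ with $C_3\ge 2\sqrt2$ we also have $s\ge n$, which gives
\begin{align*}
\min\left(\frac{s^2}{n},s\right)=s\ge \frac{t^2 n}{8}.
\end{align*}
Lemma \ref{le23} therefore yields $\textbf P(\|My\|_2^2\ge t^2n/4)\le 2\exp(-c_2t^2n/8)$.

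A union bound then gives
\begin{align*}
\textbf P(\|M\|\ge t\sqrt n)\le 2\cdot 6^n\exp\left(-\frac{c_2t^2n}{8}\right)=2\exp\left(n\log 6-\frac{c_2t^2n}{8}\right).
\end{align*}
Now take $C_3$ large enough that $c_2C_3^2/16\ge \log 6$ (in addition to $C_3\ge 2\sqrt2$). For $t\ge C_3$ the exponent is then at most $-c_2t^2n/16$, which proves the claim with $c_3=c_2/16$.

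There is no genuine obstacle here. The only point requiring care is checking that the minimum in Lemma \ref{le23} is attained by the linear term, so that the tail is of order $e^{-ct^2n}$ rather than $e^{-ct^4 n}$ or worse. This is exactly what the assumption $t\ge C_3$ guarantees, because it forces $s\gtrsim t^2 n\ge n$. The net cardinality $6^n$ is then absorbed by choosing $C_3$ large.
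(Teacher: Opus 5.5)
Your proposal is correct and follows the paper's proof essentially step for step. Both arguments use the $\tfrac12$-net reduction $\|M\|\le 2\max_{y\in\mathcal N}\|My\|_2$, apply Lemma \ref{le23} to each net point to get a tail of order $e^{-ct^2 n}$, and absorb the $6^n$ union-bound factor by taking $C_3$ large. Your explicit check that the linear term realizes the minimum in Lemma \ref{le23}, via $s\ge t^2n/8\ge n$ for $t\ge 2\sqrt2$, is simply a more careful version of a step the paper states briefly.
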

\begin{proof}
Let $\mathcal{N}$ be a $(1 / 2)$-net of $\mathbb{S}^{n-1}$. For any $u \in \mathbb{S}^{n-1}$, we can choose a $v \in \mathcal{N}$ such that $\|v-u\|_2<1 / 2$. Then
\begin{align*}
\|M u\|_2 \leq\|M v\|_2+\|M\| \cdot\|v-u\|_2 \leq\|M v\|_2+\frac{1}{2}\|M\| .
\end{align*} 
This shows that $\|M\| \leq 2 \sup _{v \in \mathcal{N}}\|M v\|_2$. For every vector $v\in\mathcal{N}$, it follows from Lemma \ref{le23} that for $t\ge2$,
\begin{align*}
\textbf{P}(\|Mv\|_2 \geq t \sqrt{n} / 2) \leq \textbf{P}\left(\left|\| M v\|_2^2-\frac{m}{2}\right| \geq t^2 n / 8\right) \leq 2 e^{-c t^2 n}.
\end{align*} 
Taking the union bound and using Lemma \ref{NONet} yields
\begin{align*}
\textbf{P}\left(\|M\| \geq t \sqrt{n}\right) \leq |\mathcal{N}|\sup_{v\in\mathcal{N}}\textbf{P}\left(\|Mv\|_2 \geq t \sqrt{n} / 2\right)\le6^n\cdot2 e^{-c t^2 n}\le 2e^{-c_3t^2n}, 
\end{align*} 
provided that $t \ge C_3$ for an appropriately chosen constant $C_3 > 2$. This
completes the proof.
\end{proof}

Lemma \ref{le23} also implies the following invertibility estimate for a fixed vector.

\begin{lemma}\label{FFV}
Let $n \in \mathbb{N}$ be even and let \(M\) be an \(m\times n\) random matrix, $n/2 \leq m \leq n$, whose rows are independent and uniformly distributed over the set of \(\{-1,0,1\}\)-vectors with exactly \(n/2\) zero coordinates. There exists a constant $c_4>0$ such that the following holds for any fixed $v \in \mathbb{S}^{n-1}$
\begin{align*}
\textbf{P}\left(\|M v\|_2 \leq \sqrt{n}/4\right) \leq 2 e^{-c_4 n}.
\end{align*} 
\end{lemma}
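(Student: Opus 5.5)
We will derive Lemma \ref{FFV} directly from the concentration inequality of Lemma \ref{le23}, exactly as the preceding remark suggests. By Lemma \ref{le23}, $\|Mv\|_2^2$ concentrates around its mean $m/2$. The hypothesis $m\ge n/2$ gives $m/2\ge n/4$, so the mean sits well above the threshold $n/16=(\sqrt n/4)^2$. The lower tail event is therefore a deviation of linear size, and Bernstein's inequality (through Lemma \ref{le23}) makes it exponentially unlikely.

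Concretely, the key steps run in this order. First, on the event $\|Mv\|_2\le \sqrt n/4$ we have $\|Mv\|_2^2\le n/16$, and hence
\begin{align*}
\left|\|Mv\|_2^2-\frac m2\right|\ \ge\ \frac m2-\frac n{16}\ \ge\ \frac n4-\frac n{16}=\frac{3n}{16}.
\end{align*}
Second, we apply Lemma \ref{le23} with $t=3n/16$. For $n\ge 16/3$ we have $t\ge 1$, so $t^2/n = 9n/256 \le t$ and the minimum equals $9n/256$. Third, this yields
\begin{align*}
\textbf{P}\left(\|Mv\|_2\le \sqrt n/4\right)\le 2\exp\left(-\tfrac{9c_2}{256}\,n\right).
\end{align*}
So the lemma holds with $c_4=9c_2/256$, at least for $n\ge 6$. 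For the finitely many smaller even $n$, we can shrink $c_4$ if needed, or simply observe that the minimum in Lemma \ref{le23} is still at least a constant multiple of $n$, since both $t^2/n$ and $t$ are of order $n$ once $t=3n/16$.

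There is no real obstacle here. The only point needing care is the use of the assumption $m\ge n/2$: it is exactly what keeps the mean $m/2$ a constant factor above the threshold $n/16$. We should also check that the $\min$ in the exponent is of order $n$ in both regimes, which holds because $t$ is proportional to $n$.
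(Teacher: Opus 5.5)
Your proof is correct and matches the paper's, which also applies Lemma \ref{le23} with $t=3n/16$, using $m\ge n/2$ to see that $\|Mv\|_2\le\sqrt n/4$ forces $\bigl|\|Mv\|_2^2-m/2\bigr|\ge 3n/16$. One small correction: the restriction $n\ge 6$ and the separate treatment of small $n$ are unnecessary, since $t^2/n\le t$ is equivalent to $t\le n$ (not to $t\ge 1$), so $t^2/n=9n/256\le 3n/16=t$ for every $n$ and $c_4=9c_2/256$ works for all even $n$.
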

\begin{proof}
Applying Lemma \ref{le23} with $t=3n/16$, we can obtain
\begin{align*}
\textbf{P}\left(\|Mv\|_2 \le  \sqrt{n} / 4\right) \leq \textbf{P}\left(\left|\| M v\|_2^2-\frac{m}{2}\right| \geq 3n/16\right) \leq 2 e^{-c_4 n}.
\end{align*} 
\end{proof}

\section{Anti-concentration for combinatorial statistics}\label{sec3}
Let \(a, v \in \mathbb{R}^n\) be vectors. The combinatorial statistic
\begin{align*}
W_{a,v} := a_1 v_{\pi(1)}+a_2 v_{\pi(2)}+\cdots+a_n v_{\pi(n)},
\end{align*} 
with $\pi$ a uniform random permutation of $[n]$, plays a foundational role in statistics (for an overview, see \cite{MR2103758}). Tran introduced a combinatorial variant of the LCD, which will be key to controlling the anti-concentration of $W_{a,v}$.
\begin{mydef}[{Combinatorial least common denominator, \cite[Definition 3.1]{tran2020smallestsingularvaluerandom}}]\label{def:CCLCD}
Given two vectors $a$ and $v$ in $\mathbb{R}^n$, as well as parameters $L, u>0$, the Combinatorial Least Common Denominator of the pair $(a, v)$ is

\begin{align*}
\operatorname{CLCD}_{L, u}^a(v):=\inf \left\{\theta>0: \operatorname{dist}\left(\theta  (\mathrm{D}(a) \otimes \mathrm{D}(v)), \mathbb{Z}^{\binom{n}{2}^2}\right)<\min (u\|\theta(\mathrm{D}(a) \otimes \mathrm{D}(v))\|_2, L)\right\} .
\end{align*} 
\end{mydef}

Here, by $\otimes$ we denote the tensor product. In particular, $\mathrm{D}(a) \otimes \mathrm{D}(v)$ is a vector in $\mathbb{R}^{{\binom{n}{2}}^2}$ whose $(i, j, k, \ell)$-coordinate is $\left(a_i-a_j\right)\left(v_k-v_{\ell}\right)$, for $1 \leq i<j \leq n$ and $1 \leq k<\ell \leq n$.

\begin{lemma}[{\cite[Theorem 3.2]{tran2020smallestsingularvaluerandom}}]\label{CCLCD}
Let $a$ and $v$ be two vectors in $\mathbb{R}^n$ with $\|\mathrm{D}(a) \otimes \mathrm{D}(v)\|_2 \geq b n^{3 / 2}$ for some $b>0$. Let $L>0$ and $u \in(0,1)$. Then for any $\varepsilon \geq 0$, we have
\begin{align*}
\mathcal{L}\left(W_{a, v}, \varepsilon\right) \leq C \varepsilon+\frac{C}{\operatorname{CLCD}_{L, u}^{a}(v)}+C e^{-8 L^2 / n^3}.
\end{align*}
The constant $C>0$ here depends only on $b$ and $u$.
\end{lemma}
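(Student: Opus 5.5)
The plan is to follow the Esseen--Hal\'asz scheme: bound the characteristic function $\varphi(t)=\textbf{E}\,e^{itW_{a,v}}$ pointwise in terms of the arithmetic structure of $\mathrm{D}(a)\otimes \mathrm{D}(v)$, and then integrate using Esseen's inequality
\begin{align*}
\mathcal{L}(W_{a,v},\varepsilon)\le C\varepsilon\int_{-1/\varepsilon}^{1/\varepsilon}|\varphi(t)|\,dt .
\end{align*}

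\textbf{Step 1: decoupling the permutation.} Write $\pi=\pi_0\circ\tau$, where $\pi_0$ is a uniform permutation. Let $\{i_1,j_1\},\dots,\{i_{n/2},j_{n/2}\}$ be a uniform random perfect matching of $[n]$, and let $\tau$ swap each pair independently with probability $1/2$. This product is again uniform. Conditionally on $\pi_0$ and the matching, $W_{a,v}$ equals a constant plus $\sum_k \beta_k d_k$, where the $\beta_k$ are i.i.d.\ Bernoulli$(1/2)$ and
\begin{align*}
d_k=(a_{i_k}-a_{j_k})\bigl(v_{\pi_0(j_k)}-v_{\pi_0(i_k)}\bigr).
\end{align*}
Hence, using $|\cos x|\le \exp(-c\,\mathrm{dist}(x/\pi,\mathbb Z)^2)$,
\begin{align*}
|\varphi(t)|\le \textbf{E}\prod_k\Bigl|\cos\frac{t d_k}{2}\Bigr|\le \textbf{E}\exp\Bigl(-c\sum_k \mathrm{dist}\bigl(\tfrac{t d_k}{2\pi},\mathbb Z\bigr)^2\Bigr).
\end{align*}

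\textbf{Step 2: from a random matching to all pairs (the main obstacle).} The goal is a Roos-type bound
\begin{align*}
|\varphi(t)|\le C\exp\Bigl(-\frac{c}{n^{3}}\,\mathrm{dist}\bigl(\theta\,\mathrm{D}(a)\otimes\mathrm{D}(v),\mathbb Z^{\binom n2^2}\bigr)^2\Bigr)+Ce^{-cn},\qquad \theta=\frac{t}{2\pi}
\end{align*}
(up to adjusting $\theta$ by a constant factor). In expectation over $\pi_0$ and the matching, each term $\mathrm{dist}(\theta d_k,\mathbb Z)^2$ averages $\mathrm{dist}(\theta(a_i-a_j)(v_k-v_\ell),\mathbb Z)^2$ over all index quadruples. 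There are $n/2$ terms and about $n^4$ quadruples, which explains the normalization $n^{-3}$.

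The difficulty is that we need the exponential of the sum to be small, not merely the sum to be large in mean. I would try to get concentration of $\sum_k \mathrm{dist}(\theta d_k,\mathbb Z)^2$ around its mean. Each summand lies in $[0,1/4]$, and changing one transposition in $\pi_0$ or the matching affects $O(1)$ summands. So a bounded-differences inequality for uniform permutations and matchings gives lower-tail deviations of order $\sqrt n$ with probability $e^{-cn}$. That suffices once the mean exceeds a large multiple of $\sqrt n$; otherwise the bound is trivial up to constants. If this turns out too lossy, the fallback is Roos's direct Fourier-analytic estimate for permutation statistics, which yields the displayed bound.

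\textbf{Step 3: integration.} Set $D=\mathrm{CLCD}^a_{L,u}(v)$. For $0<\theta<D$, the definition of $D$ gives
\begin{align*}
\mathrm{dist}\bigl(\theta\,\mathrm{D}(a)\otimes\mathrm{D}(v),\mathbb Z\bigr)\ge \min\bigl(u\theta\|\mathrm{D}(a)\otimes\mathrm{D}(v)\|_2,\,L\bigr)\ge \min\bigl(ub\,\theta n^{3/2},\,L\bigr).
\end{align*}
Therefore, for $|t|\le 2\pi D$, Step 2 yields
\begin{align*}
|\varphi(t)|\le C e^{-c u^2 b^2 t^2}+Ce^{-cL^2/n^3}+Ce^{-cn}.
\end{align*}

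If $1/\varepsilon\le 2\pi D$, integrate this over $|t|\le 1/\varepsilon$ in Esseen's inequality. The Gaussian term contributes $C\varepsilon$. The remaining terms, integrated over an interval of length $2/\varepsilon$ and multiplied by $\varepsilon$, contribute $Ce^{-cL^2/n^3}$. The $e^{-cn}$ term is either absorbed by adjusting constants or handled directly in Step 2.

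If instead $\varepsilon<1/(2\pi D)$, use monotonicity in $\varepsilon$: $\mathcal L(W,\varepsilon)\le\mathcal L(W,1/(2\pi D))$. The previous case then gives the bound $C/D+Ce^{-cL^2/n^3}$. Tracking the absolute constants, so that the exponent reads $8L^2/n^3$ after rescaling $L$ by a constant, is routine.
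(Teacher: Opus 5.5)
The gap is in Step 2. A bounded-differences inequality for $S=\sum_k\operatorname{dist}(\theta d_k,\mathbb{Z})^2$, viewed as a function of $\pi_0$ and the matching, gives only $\textbf{P}(S\le \textbf{E}S-s)\le e^{-cs^2/n}$. So it bounds the probability of a lower deviation of order $\sqrt n$ only by a constant, and probability $e^{-cn}$ requires $s\asymp n$. Write $m=\textbf{E}S\asymp n^{-3}\operatorname{dist}(\theta\,\mathrm{D}(a)\otimes\mathrm{D}(v),\mathbb{Z}^{\binom{n}{2}^2})^2$. What you actually obtain is
\begin{align*}
|\varphi(t)|\le e^{-cm/2}+e^{-cm^2/n}.
\end{align*}
This is weaker than $e^{-cm}$ on the whole range $1\ll m\ll n$. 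Moreover, for $m\lesssim\sqrt n$ the target $Ce^{-cm}$ is not "trivial up to constants"; only $m=O(1)$ is.

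This range is exactly what produces the $C\varepsilon$ term. Below the CLCD, in the regime $u\theta\|\mathrm{D}(a)\otimes\mathrm{D}(v)\|_2<L$, you only know $m\gtrsim u^2b^2\theta^2$. Your estimate then reads $|\varphi(t)|\lesssim e^{-ct^2}+e^{-ct^4/n}$, which has no decay for $|t|\lesssim n^{1/4}$. The Esseen integral is therefore of order $n^{1/4}$, and you end with $\mathcal{L}(W_{a,v},\varepsilon)\lesssim n^{1/4}\varepsilon$, with $e^{-cL^4/n^7}$ in place of $e^{-cL^2/n^3}$. That is not the lemma, and it would destroy the linear dependence on $\varepsilon$ in Theorem \ref{main}.

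What is missing is a multiplicative lower tail $\textbf{P}(S\le m/2)\le e^{-cm}$, i.e. a Chernoff/Bernstein-type bound whose variance proxy is proportional to the mean. Such a bound is natural here, because the summands lie in $[0,1/4]$ and so their squares sum to at most $S/4$. It could plausibly be obtained from Talagrand's convex-distance inequality on products of symmetric groups, but bounded differences cannot supply it.

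The paper does not prove this lemma; it cites Tran's Theorem 3.2, which avoids the issue entirely. There one writes $\varphi(t)=\frac{1}{n!}\operatorname{per}\bigl(e^{ita_jv_k}\bigr)_{j,k}$ and applies Roos's permanent inequality. This gives a deterministic bound of the form $|\varphi(t)|\le\exp\bigl(-\frac{c}{n^3}\sum_{i<j,\,k<\ell}\sin^2\bigl(\tfrac t2(a_i-a_j)(v_k-v_\ell)\bigr)\bigr)$, with no additive error. That is your stated fallback, and with it your Steps 1--2 become unnecessary. Your Step 1 computation (including $\textbf{E}S\asymp n^{-3}\operatorname{dist}^2$) and your Step 3 integration are correct and match that argument.

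Finally, the constant $8$ in $e^{-8L^2/n^3}$ cannot be recovered by "rescaling $L$". Since $\operatorname{CLCD}^a_{L,u}(v)$ is nonincreasing in $L$, enlarging $L$ worsens the $C/\operatorname{CLCD}$ term. The $8$ comes from the explicit constant in Roos's bound, although any constant would suffice for the paper's application.
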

For the detailed proof of Lemma \ref{CCLCD}, we can see \cite[Theorem 3.2]{tran2020smallestsingularvaluerandom}. We now turn to the proof of Theorem \ref{LOT}.

\begin{proof}[Proof of Theorem \ref{LOT}]
Let us recall the setting of $W_{v}$ which we considered in Section \ref{sec1}:
\begin{align}\label{W_v}
    W_{v} := \xi_1 v_1 + \dots + \xi_n v_n.
\end{align}
where the vector $(\xi_1, \dots, \xi_n)^{\top}$ is chosen uniformly from the set of vectors in $\{-1,0, 1\}^n$ satisfying the constraint $\sum_{i=1}^n |\xi_i| = n/2$.

Let $p:=|\{i\in[n]:\xi_i=1\}|$, $q:=|\{i\in[n]:\xi_i=-1\}|$, we have $p+q=n/2$. For any fixed pair $(p,q)$ with $p+q=n/2$, define the deterministic vector
\begin{align*}
a^{(p,q)}:=\big(\underbrace{1,\ldots,1}_{p},\underbrace{0,\ldots,0}_{n/2},
\underbrace{-1,\ldots,-1}_{q}\big)\in\{-1,0,1\}^n.
\end{align*}
Let $\pi$ be a uniform random permutation of $[n]$, independent of everything else, and write $\pi(a^{(p,q)})$
for the permuted vector. Then, conditional on $(p,q)$, the law of $\xi$ is uniform over all vectors in $\{-1,0,1\}^n$
with exactly $p$ entries equal to $1$, exactly $q$ entries equal to $-1$, and the remaining $n/2$ entries equal to $0$.
Consequently,
\begin{align*}
\xi\,|\, (p,q)\ \overset{d}{=}\ \pi(a^{(p,q)}).
\end{align*}
Here and in what follows, the notation $A \mid B$ is understood in the sense of the conditional law of $A$ given $B$.

Let $W_{v}$ denote the corresponding linear form (or combinatorial random sum) defined as in Theorem \ref{LOT}. For each fixed $(p,q)$, let $W_{a^{(p,q)},v}$ be denoted as 
\begin{align*}
W_{a^{(p,q)},v} := a^{(p,q)}_1 v_{\pi(1)}+a^{(p,q)}_2 v_{\pi(2)}+\cdots+a^{(p,q)}_n v_{\pi(n)}\overset{d}{=}a^{(p,q)}_{\pi(1)} v_1+a^{(p,q)}_{\pi(2)} v_2+\cdots+a^{(p,q)}_{\pi(n)} v_n.
\end{align*}
Then
\begin{align*}
W_{v}\,\big|\, (p,q)\ \overset{d}{=}\ W_{a^{(p,q)},v}\quad
\text{and for any }\lambda\in\mathbb{R},\quad
\textbf{P}(|W_{v}-\lambda|\le\varepsilon\ |\,p,q)=\textbf{P}(|W_{a^{(p,q)},v}-\lambda|\le\varepsilon).
\end{align*}
Averaging over $(p,q)$ yields
\begin{align*}
\mathcal{L}(W_{v},\varepsilon)
=\sup_{\lambda}\textbf{E}_{p,q}\,\textbf{P}(|W_{v}-\lambda|\le\varepsilon\ |\,p,q)
\le \sup_{p+q=n/2}\mathcal{L}(W_{a^{(p,q)},v},\varepsilon).
\end{align*}
Under this ordering, we set $L=\alpha n/2, \gamma=\sqrt{2}u$. $D(a)=\left(a_i-a_j\right)_{i<j}\in\{0,1,2\}$. For convenience, we set $a := a^{(p,q)}$. Then, through a simple calculation, we can obtain
\begin{align*}
|\{ (i,j) : 1 \le i < j \le n, |a_i - a_j| = 1 \}| = \frac{n^2}{4},\quad
|\{ (i,j) : 1 \le i < j \le n, |a_i - a_j| = 2 \}| = pq \in \left[0, \frac{n^2}{16}\right].
\end{align*}

The coordinates of $D(a) \otimes D(v)$ are given by $\left(a_i-a_j\right)\left(v_k-v_{\ell}\right)$,  for $1 \le i < j \le n$ and $1\le k <  \ell\le n$. Thus, after the following coordinate rearrangement (which does not modify the Euclidean norm/distance to lattice points), we obtain
\begin{align*}
D(a) \otimes D(v) \equiv (\underbrace{D(v) \oplus \cdots \oplus D(v)}_{{n^2}/{4} \text{ copies}}) \oplus (\underbrace{2 D(v) \oplus \cdots \oplus 2 D(v)}_{pq \text{ copies}}) \oplus 0\oplus\cdots \oplus 0 .
\end{align*}
\noindent Hence,
\begin{align*}
\|D(a) \otimes D(v)\|_2=\sqrt{\frac{n^2}{4}+4pq}\|D(v)\|_2\in \left[\frac{n}{2}\|D(v)\|_2,\frac{n}{\sqrt{2}}\|D(v)\|_2\right].
\end{align*}

Therefore, under the assumption of Lemma \ref{CCLCD}, since $\mathcal{L}(W_{v},\varepsilon)
\le \sup_{p+q=n/2}\mathcal{L}(W_{a,v},\varepsilon)$ and
 $\|D(a) \otimes D(v)\|_2\ge \frac{n}{2}\|D(v)\|_2 \ge \frac12b n^{3/2}$.
It suffices to show that for any $\{(p,q):p+q=n/2\}$,
\begin{align*}
{\operatorname{CLCD}_{L, u}^{a}(v)}\ge {\operatorname{CLCD}_{\alpha, \gamma}(v)},
\end{align*}
if we fix any $t\in\left\{\theta>0: \operatorname{dist}\left(\theta  (\mathrm{D}(a) \otimes \mathrm{D}(v)), \mathbb{Z}^{\binom{n}{2}^2}\right)<\min (u\|\theta(\mathrm{D}(a) \otimes \mathrm{D}(v))\|_2, L)\right\}$. Then, by the definitions of ${\operatorname{CLCD}_{L, u}^{a}(v)}$ and ${\operatorname{CLCD}_{\alpha, \gamma}(v)}$, we have
\begin{align*}
\operatorname{dist}\left(t  \mathrm{D}(v), \mathbb{Z}^{\binom{n}{2}}\right)&\le\frac{2}{n}\operatorname{dist}\left(t  (\mathrm{D}(a) \otimes \mathrm{D}(v)), \mathbb{Z}^{\binom{n}{2}^2}\right)\\
&\le\frac{2}{n}\min (u\|t(\mathrm{D}(a) \otimes \mathrm{D}(v))\|_2, L)\\
&\le\min \left(2u\sqrt{\frac{1}{4}+\frac{4pq}{n^2}}\|t\mathrm{D}(v)\|_2, \frac{2L}{n}\right)
\\
&\le\min \left(\sqrt2u\|t\mathrm{D}(v)\|_2, \frac{2L}{n}\right).
\end{align*}
Hence, we obtain ${\operatorname{CLCD}_{\frac{2L}{n}, \sqrt2u}(v)}\le {\operatorname{CLCD}_{L, u}^{a}(v)}$, we then get
\begin{align*}
\mathcal{L}\left(W_{v}, \varepsilon\right) \le C \varepsilon+\frac{C}{\operatorname{CLCD}_{\alpha, \gamma}(v)}+C e^{-2 \alpha^2 / n}.
\end{align*}
\end{proof}

\section{Proof of Theorem \ref{theorem:distance}}\label{sec4}
Recall that the condition $\|\mathrm{D}(v)\|_2 \geq b \sqrt{n}$ is required for Theorem \ref{theorem:distance} to hold. Here we briefly state that $\|\mathrm{D}(v)\|_2 \geq b \sqrt{n}$ holds for any non-almost-constant vector.
\begin{mydef}
Fix $\delta, \rho \in (0,1)$ whose values will be chosen later. A vector $v \in \mathbb{S}^{n-1}$ is called \textit{almost-constant} if one can find $\lambda \in \mathbb{R}$ such that there are at least $(1-\delta)n$ coordinates $i \in [n]$ satisfying $|v_i - \lambda| \leq \frac{\rho}{\sqrt{n}}$. A vector $v \in \mathbb{S}^{n-1}$ is called \textit{non-almost-constant} if it is not almost-constant. The sets of almost-constant and non-almost-constant vectors will be denoted by $\operatorname{Cons}(\delta,\rho)$ and $\mathcal{N}(\delta,\rho)$, respectively.
\end{mydef}
\begin{lemma}[{\cite[Lemma 2.2]{tran2020smallestsingularvaluerandom}}]\label{lemma:D(v) lower bound}
Let $\delta, \rho \in (0,1)$. Then for any vector $v \in \mathbb{S}^{n-1} \setminus \operatorname{Cons}(\delta,\rho)$, there are disjoint subsets $\sigma_1 = \sigma_1(v)\subset[n]$ and $\sigma_2 = \sigma_2(v)\subset[n]$ of cardinality $|\sigma_1|, |\sigma_2| \geq \delta n / 8$ and such that
\begin{align*}
  \frac{\rho}{\sqrt{2n}} \leq |v_i - v_j| \leq \frac{6}{\sqrt{\delta n}} \quad  \quad \text{for all } i \in \sigma_1 \text{ and } j \in \sigma_2.
\end{align*}
\end{lemma}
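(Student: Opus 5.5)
The plan is to argue by contradiction from the definition of almost-constant vectors, choosing $\sigma_1,\sigma_2$ as sets of coordinates lying to the left and right of a suitable median-type threshold. First we control large coordinates. Since $\|v\|_2=1$, at most $\delta n/4$ coordinates satisfy $|v_i|>2/\sqrt{\delta n}$ (Chebyshev: $\#\{i:|v_i|>s\}\le 1/s^2$). Let $J\subset[n]$ be the set of remaining coordinates, so $|J|\ge (1-\delta/4)n$ and $|v_i|\le 2/\sqrt{\delta n}$ for $i\in J$. Any two coordinates in $J$ then differ by at most $4/\sqrt{\delta n}\le 6/\sqrt{\delta n}$, so the upper bound in the claim is automatic as long as we take $\sigma_1,\sigma_2\subset J$.

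Next we produce the lower bound. Sort the values $\{v_i\}_{i\in J}$ and let $\lambda$ be their median. We look at the sets $J_-=\{i\in J: v_i\le\lambda-\rho/(2\sqrt{2n})\}$ and $J_+=\{i\in J: v_i\ge \lambda+\rho/(2\sqrt{2n})\}$. Since $v$ is not almost-constant, fewer than $(1-\delta)n$ coordinates lie within $\rho/\sqrt n$ of $\lambda$. Hence more than $\delta n$ coordinates lie outside that window, and at least $\delta n-\delta n/4=3\delta n/4$ of them belong to $J$; these all lie in $J_-\cup J_+$. By symmetry, say $|J_+|\ge 3\delta n/8\ge\delta n/8$. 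Set $\sigma_2=J_+$ and $\sigma_1=\{i\in J: v_i\le\lambda\}$. Then $|\sigma_1|\ge |J|/2\ge \delta n/8$, the two sets are disjoint, and for $i\in\sigma_1$, $j\in\sigma_2$ we get $|v_i-v_j|\ge \rho/(2\sqrt{2n})$.

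That lower bound is off by a factor $2$ from the target $\rho/\sqrt{2n}$, so the last step is to adjust the constants. We simply use the window $\rho/\sqrt{2n}$ to define $J_+$ and $J_-$ instead, which is allowed because $\rho/\sqrt{2n}<\rho/\sqrt n$: every coordinate outside the almost-constant window is at distance at least $\rho/\sqrt n>\rho/\sqrt{2n}$ from $\lambda$. With that choice the pair $\sigma_1$ (at or below the median) and $\sigma_2$ (at distance at least $\rho/\sqrt{2n}$ above $\lambda$) gives exactly the required separation. The only delicate point is this bookkeeping of the counting: $|J|\ge(1-\delta/4)n$, the median split, and the "more than $\delta n$ outside the window" bound must together leave $\ge\delta n/8$ elements on each side. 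I expect this to be routine once $J$ is fixed, and the main care goes into using the median rather than an arbitrary $\lambda$, so that $\sigma_1$ is automatically large.
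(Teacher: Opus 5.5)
Your proof is correct. The paper does not prove this lemma at all; it quotes it verbatim from Tran's paper (Lemma 2.2). So there is no in-paper argument to compare with, and your median-splitting argument gives a complete, self-contained proof.

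The counting checks out:
\begin{itemize}
\item Chebyshev gives $|J^c|\le \delta n/4$, so any two coordinates in $J$ differ by at most $4/\sqrt{\delta n}$.
\item Applying non-almost-constancy at the median $\lambda$ of $\{v_i\}_{i\in J}$ (legitimate, since the condition holds for every $\lambda\in\mathbb{R}$) leaves more than $3\delta n/4$ indices of $J$ with $|v_i-\lambda|>\rho/\sqrt n$.
\item Hence one side of $\lambda$ contains more than $3\delta n/8$ of these indices.
\item The opposite closed half of $J$ has at least $|J|/2\ge 3n/8$ elements and is disjoint from that side.
\end{itemize}

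Two small remarks. First, the detour through the window $\rho/(2\sqrt{2n})$ is unnecessary. If you set $J_+=\{i\in J: v_i>\lambda+\rho/\sqrt n\}$ directly, you get the stronger separation $|v_i-v_j|>\rho/\sqrt n\ge\rho/\sqrt{2n}$ with no adjustment. Second, in the symmetric case you should say explicitly that $\sigma_1$ becomes $\{i\in J: v_i\ge\lambda\}$, which is also of size at least $|J|/2$ by the median property.
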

\noindent Then we can observe that
\begin{align*}
\|\mathrm{D}(v)\|_2\ge\sqrt{\frac{1}{2}|\sigma_1| |\sigma_2|\left(\frac{\rho}{\sqrt{2n}}\right)^2}\ge\frac{\delta\rho}{16}\sqrt{n}.
\end{align*}
Therefore, the proof of Theorem \ref{theorem:distance} can be divided into two parts: in Section \ref{subsection:Random normal is non-almost-constant}, we prove that random normal is non-almost-constant, and in Section \ref{subsection:Invertibility for non-almost-constant vectors}, we discuss the invertibility for non-almost-constant vectors.
\subsection{Random normal is non-almost-constant}\label{subsection:Random normal is non-almost-constant}
The following proposition tells us that with probability at least $1-e^{-c_5n}$, any unit vector orthogonal to $H_n$ is in $\mathcal{N}(\delta,\rho)$.
\begin{proposition}\label{prop:Random normal is non-almost-constant}
Let $\delta, \rho \in \left(0, \frac{1}{12}\right)$. Assume that $n$ is sufficiently large.
\begin{align*}
\textbf{P}(\exists v \in\operatorname{Cons}(\delta,\rho) \text{ and } v\bot H_n)\le\textbf{P}(\inf_{v\in\operatorname{Cons}(\delta,\rho)}\|M_n'v\|_2\le\frac{\sqrt{n}}{16})\le e^{-c_5n},
\end{align*}
where the $(n-1)\times n$ random matrix $M_n'$ is composed of the first $n-1$ rows of the random matrix $M_n$, i.e., the submatrix of $M_n$ obtained by removing the last row.
\end{proposition}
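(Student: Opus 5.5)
The plan is to reduce $\operatorname{Cons}(\delta,\rho)$ to a low-complexity set (a constant vector plus a $\delta n$-sparse vector), up to an error $u$ with $\|u\|_2\le\rho$. On the main part I will use a net and the single-vector estimate; the error I will absorb with an explicit operator-norm bound. Throughout I will work for arbitrary fixed $\delta,\rho\in(0,1/12)$, keeping every constant explicit rather than shrinking $\delta,\rho$. The bound is only claimed with some $c_5=c_5(\delta,\rho)>0$, so that is what the argument will have to produce. The first inequality in the statement is immediate: if $v\perp H_n$ then $M_n'v=0$.

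\emph{Structural decomposition.} Take $v\in\operatorname{Cons}(\delta,\rho)$ with its $\lambda$ and a set $I$, $|I|\ge(1-\delta)n$, on which $|v_i-\lambda|\le\rho/\sqrt n$. Write
\begin{align*}
v=\lambda\mathbf 1+z+u,
\end{align*}
where $z:=(v-\lambda\mathbf 1)_{I^c}$ is supported on at most $\delta n$ coordinates and $u:=(v-\lambda\mathbf 1)_I$ satisfies $\|u\|_2\le\rho$. Hence $y:=\lambda\mathbf 1+z$ lies in
\begin{align*}
\mathcal Y:=\bigcup_{|J|=\delta n}\operatorname{span}\{\mathbf 1,e_j:j\in J\},
\end{align*}
and $\|y\|_2\in[1-\rho,1+\rho]$. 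For each $J$ I take an $\varepsilon_0$-net of this annulus in a space of dimension $\delta n+1$, where $\varepsilon_0$ is a small absolute constant. The resulting net $\mathcal N_0$ of $\mathcal Y\cap\{1-\rho\le\|y\|\le 1+\rho\}$ has
\begin{align*}
|\mathcal N_0|\le\binom{n}{\delta n}(3/\varepsilon_0)^{\delta n+1}\le \exp\big((H(\delta)+\delta\log(3/\varepsilon_0))n+O(1)\big).
\end{align*}

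\emph{Triangle inequality.} For $y'\in\mathcal N_0$ with $\|y-y'\|\le\varepsilon_0$,
\begin{align*}
\|M_n'v\|_2\ge\|M_n'y'\|_2-\|M_n'\|(\varepsilon_0+\rho).
\end{align*}
The generic constant $C_3$ of Proposition \ref{prop:first_singular_value} is not enough here. Instead I will redo its proof with explicit constants, using the sub-Gaussian bound $\mathbf E e^{\lambda X}\le e^{\lambda^2/2}$ from Lemma \ref{le23}. That bound gives $\mathbf E e^{sX^2}\le(1-2s)^{-1/2}$, so Chernoff yields $\mathbf P(\|Mw\|_2^2\ge tn)\le\big(e^{-st}(1-2s)^{-1/2}\big)^{n}$. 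Taking a $1/2$-net ($5^n$ points), $s=0.4$ and $t\approx 12$ makes this summable, so $\|M_n'\|\le 7\sqrt n$ with probability $1-e^{-cn}$. Then $\rho<1/12$ and a small $\varepsilon_0$ give $\|M_n'\|(\varepsilon_0+\rho)\le 0.6\sqrt n$.

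\emph{Lower bound on the net (main obstacle).} It remains to show that, with probability at least $1-e^{-cn}$, $\|M_n'y'\|_2\ge(0.6+1/16)\sqrt n$ for all $y'\in\mathcal N_0$. By Lemma \ref{le23}, $\mathbf E\|M_n'y'\|^2=\frac{n-1}{2}\|y'\|^2\ge 0.42n$, so the typical value is about $0.65\sqrt n$, which has only a thin margin over $0.66\sqrt n$. This is where I expect the difficulty. Two things must be balanced: the per-vector lower-tail probability must beat $|\mathcal N_0|\approx e^{(0.29+0.09\log(3/\varepsilon_0))n}$, and the margin above is too thin as it stands. My first attempt would be to sharpen the approximation step rather than the lower tail. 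Since $u$ has all coordinates at most $\rho/\sqrt n$ and is supported on $I$, I would bound $\sup_{u}\|M_n'u\|$ over this structured set by running the Chernoff/net argument directly on the set $\{\|u\|\le\rho\}$. This should give $\|M_n'u\|\lesssim\rho\cdot(1+o(1))\sqrt{2n}$-type bounds, which are much better than $7\rho\sqrt n$, and the resulting slack would then let the lower tail from Lemma \ref{le23} (with an explicit $c_2$, obtained as above via $\mathbf E e^{-\tau X^2}$) beat the entropy $|\mathcal N_0|$. A union bound over $\mathcal N_0$, together with the operator-norm event, would then conclude.
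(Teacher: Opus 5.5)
There is a genuine gap. The step you identify as the main obstacle is never carried out, and as you set it up it is false rather than delicate.

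\emph{The margin is negative.} You need $\|M_n'y'\|_2\ge(0.6+1/16)\sqrt n\approx0.66\sqrt n$ for all $y'\in\mathcal N_0$. For $\rho$ near $1/12$, however, $\mathcal N_0$ contains points with $\|y'\|_2\approx1-\rho\approx0.92$. Lemma \ref{le23}, applied to $y'/\|y'\|_2$ with $t=n^{2/3}$, gives $\|M_n'y'\|_2^2=\frac{n-1}{2}\|y'\|_2^2+o(n)\approx0.42n$ with probability tending to one. So the typical value $0.65\sqrt n$ lies below your threshold, and the per-vector failure probability is close to $1$, not exponentially small.

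\emph{The sketched repair does not fix this.} A Chernoff/net argument only gives $\sup_u\|M_n'u\|_2\le C\rho\sqrt n$ with $C$ well above $\sqrt2$. Nets lose constant factors, and the bound $\mathbf Ee^{\lambda X}\le e^{\lambda^2/2}$ of Lemma \ref{le23} has variance proxy $1$, not $\mathbf EX^2=1/2$. The target $(1+o(1))\rho\sqrt{2n}$ is essentially the sharp Bai--Yin size of $\rho\|M_n'\|$ itself, which a net argument cannot produce.

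\emph{A uniform union bound is blocked by localized directions.} Note that $e_1\in\operatorname{Cons}(\delta,\rho)$. Each row misses coordinate $1$ with probability $1/2$, so $\mathbf P(M_n'e_1=0)=2^{-(n-1)}$. On that event the net point $y'$ nearest $e_1$ has $\|M_n'y'\|_2\le\varepsilon_0\|M_n'\|$. Hence no per-vector bound better than about $2^{-n}$ exists, and the union bound would need $\log|\mathcal N_0|<n\log2$. Your own count gives about $(H(\delta)+\delta\log(3/\varepsilon_0))n\approx0.9n$ for $\delta$ near $1/12$, using the $\varepsilon_0\approx2\cdot10^{-3}$ that your budget $7(\rho+\varepsilon_0)\le0.6$ forces. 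Enlarging $\varepsilon_0$ only enlarges the error $\varepsilon_0\|M_n'\|$ that must be absorbed.

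The missing idea is that $\delta,\rho$ must be small relative to the absolute constants, which is how the paper proceeds:
\begin{itemize}
\item It uses a net of size $e^{2\delta\log(5/\delta)n}$ with approximation error $\delta+2\rho$.
\item It applies Lemma \ref{FFV} at level $\sqrt n/8$ to net points of norm in $[1/2,3/2]$, taking $\delta$ small enough that $2\delta\log(5/\delta)<c_4/2$.
\item It sets $\delta=\rho=1/(100K)$, where $\|M_n'\|\le K\sqrt n$, so that $\sqrt n/16+(\delta+2\rho)K\sqrt n\le\sqrt n/8$.
\end{itemize}
The approximation error is thus made negligible against $K$, instead of being balanced against the mean. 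With that restriction, your decomposition, net and a union bound via Lemma \ref{FFV} go through.

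Be aware that this establishes the proposition only for sufficiently small $\delta,\rho$. That is all Theorem \ref{theorem:distance} uses. Since $\operatorname{Cons}(\delta,\rho)$ increases with $\delta$ and $\rho$, it does not yield the full range $(0,1/12)$. Covering that range would need a genuinely finer argument, which your proposal does not contain.
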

\begin{proof}
For any vector $v\in\operatorname{Cons}(\delta,\rho)$, by definition, there exists a scalar $\lambda \in \mathbb{R}$ and an index set $\sigma \subset [n]$ with cardinality $|\sigma| = (1 - \delta)n$ such that $|v_i - \lambda| \le \rho/\sqrt{n}$ for all $i \in \sigma$.

To discretize the range of $\lambda$, consider the grid defined by
\begin{align}\label{eq:grid_lambda}
    \mathcal{N}_\lambda = [-2, 2] \cap \frac{\rho}{\sqrt{n}}\mathbb{Z}. 
\end{align}

Any $\lambda$ that satisfies $|v_i - \lambda| \le \rho/\sqrt{n}$ can be approximated by some $\lambda_0 \in \mathcal{N}_\lambda$ that satisfies $|\lambda - \lambda_0| \le \rho/\sqrt{n}$. By the triangle inequality, this implies
\begin{align*}
    |v_i - \lambda_0| \le \frac{2\rho}{\sqrt{n}} \quad \text{for all } i \in \sigma.
\end{align*}
Next, we approximate the projection $v_{\sigma^c}$ by quantizing its coordinates uniformly with step size $\sqrt{\delta/n}$. Specifically, there exists a vector $\widetilde{u} \in \sqrt{\frac{\delta}{n}}\mathbb{Z}^{\sigma^c}$ such that $\|v_{\sigma^c} - \widetilde{u}\|_\infty \le \sqrt{\delta/n}$. 
Given that $\|v\|_2 = 1$, we have
\begin{align}\label{eq:norm_bound}
    \|\widetilde{u}\|_2 \le \|v_{\sigma^c}\|_2 + \|v_{\sigma^c} - \widetilde{u}\|_2\le \|v\|_2 + \sqrt{|\sigma^c|}  \|v_{\sigma^c} - \widetilde{u}\|_\infty \le 1 + \delta \le \frac{3}{2}.
\end{align}

We now obtain an approximate vector $u \in \mathcal{F}$ by setting $u_\sigma = (\lambda_0, \dots, \lambda_0)$ and $u_{\sigma^c} = \widetilde{u}$. 
Accordingly, we define
\begin{align*}
    \mathcal{F} := \bigcup_{|\sigma|=(1-\delta)n} \left( \left\{ \lambda_0 \mathbf{1}_\sigma : \lambda_0 \in [-2, 2] \cap \frac{\rho}{\sqrt{n}}\mathbb{Z} \right\} \oplus \left\{ \widetilde{u} \in \sqrt{\frac{\delta}{n}}\mathbb{Z}^{\sigma^c} : \|\widetilde{u}\|_2 \le 3/2 \right\} \right),
\end{align*}
using the construction of the grid in \eqref{eq:grid_lambda} and the bound \eqref{eq:norm_bound}.

From the definition of $u$, we have
\begin{align*}
    \|v - u\|_2 
    \le \|v_{\sigma^c} - \widetilde{u}\|_2 + \|v_\sigma - u_\sigma\|_2 \le \sqrt{\delta n} \|v_{\sigma^c} - \widetilde{u}\|_\infty + \sqrt{n} \max_{i \in \sigma} |v_i - \lambda_0| \le \delta + 2\rho.
\end{align*}
Let $\beta := \delta + 2\rho$, we observe that $\beta \in (0, 1/4)$. Define $\mathcal G$ to be the set of all vectors $u=u(v)$ produced by the above rounding procedure from some $v\in \operatorname{Cons}(\delta,\rho)$. Then for any $u\in {\mathcal G}$, we have $1/2\le\|u\|_2 \le 3/2$. 

It remains to estimate the cardinality of $\mathcal{F}$. The number of ways to choose the index set $\sigma$ in $\mathcal{F}$ is given by the binomial coefficient $\binom{n}{\delta n}$, which satisfies the standard bound $\binom{n}{\delta n} \le \left(\frac{e}{\delta}\right)^{\delta n}$.
For the scalar component $\lambda_0$, the size of the grid definition in \eqref{eq:grid_lambda} implies there are at most $1 + 4\sqrt{n}/\rho$ possibilities. Furthermore, a standard volumetric argument bounds the number of choices for $\widetilde{u}$ in $\mathcal{F}$ by $(5/\delta)^{\delta n}$. 

Combining these estimates, we bound the cardinality of the net $\mathcal{G}$:
\begin{align*}
|\mathcal G|\le|\mathcal{F}| \le \left(\frac{e}{\delta}\right)^{\delta n} \cdot \left(1 + \frac{4\sqrt{n}}{\rho}\right) \cdot \left(\frac{5}{\delta}\right)^{\delta n}\le e^{2\delta \log(5/\delta)n}.
\end{align*}

By Proposition \ref{prop:first_singular_value}, there exists absolute constants $K \ge 1$ such that the upper bound $\|M_n'\|  \le K\sqrt{n}$ holds with probability at least $1 - e^{-n}$. To conclude the proof, it suffices to show that for some constant $c > 0$, the probability of the event
\begin{align*}
    \mathcal{E} := \left\{ \inf_{v \in \operatorname{Cons}(\delta, \rho)} \|M_n'v\|_2 \le \frac{\sqrt{n}}{16} \quad \text{and} \quad \|M_n'\| \le K\sqrt{n} \right\}
\end{align*}
is bounded by $2e^{-cn}$. 
By Lemma \ref{FFV}, for each fixed vector $w \in \mathcal{G}$, we have
\begin{align*}
    \textbf{P}\left(\|M_n'\frac{w}{\|w\|_2}\|_2 \le \frac{\sqrt{n}}{4}\right) \le 2e^{-c_4 n}.
\end{align*}
Since $\frac12\le\|w\|_2\le\frac32$, applying a union bound over the net $\mathcal{G}$, we obtain
\begin{align}\label{eq:netF}
    \textbf{P} \left( \inf_{w \in \mathcal{G}} \|M_n'w\|_2 \le \frac{\sqrt{n}}{8} \right) \le e^{2\delta \log(5/\delta)n} \cdot 2e^{-c_4 n} \le 2e^{-c_4 n/2}, 
\end{align}
where the last inequality holds for sufficiently small $\delta$.

Suppose that $\mathcal E$ occurs. Then $\|M_n'\|  \le K\sqrt{n}$ and there exists $v \in \operatorname{Cons}(\delta, \rho)$ such that $\|M_n'v\|_2 \le \sqrt{n}/16$. By the construction of the net $\mathcal{G}$, there exists a $w \in \mathcal{G}$ satisfying $\|M_n'(v - w)\|_2 \le (\delta + 2\rho) \|M_n'\| .$
Choosing $\delta=\rho=\frac 1 {100K}$, we have
\begin{align*}
    \|M_n'w\|_2 &\le \|M_n'v\|_2 + \|M_n'(v - w)\|_2 \le \frac{\sqrt{n}}{16} + (\delta + 2\rho)K\sqrt{n} \le \frac{\sqrt{n}}{8}.
\end{align*}
Furthermore, by \eqref{eq:netF}, we obtain
\begin{align*}
   \textbf{P}(\mathcal{E})\le \textbf{P} \left( \inf_{w \in \mathcal{G}} \|M_n'w\|_2 \le \frac{\sqrt{n}}{8} \right) \le 2e^{-c_5 n},
\end{align*}
for sufficiently large $n$. This completes the proof.
\end{proof}

\subsection{Invertibility for non-almost-constant vectors}\label{subsection:Invertibility for non-almost-constant vectors}
Consequently, the proof reduces to studying the invertibility of vectors that are not almost-constant. Throughout the remainder of the proof, we set
\begin{align*}
\alpha = \mu n.
\end{align*}
For any $v \in\mathcal{N}(\delta,\rho)$ and $v\bot H_n$, if $\operatorname{CLCD}_{\mu n,\gamma}(v) \ge e^{cn}$, then the main result \eqref{eq:Levy} follows from Theorem \ref{LOT}. Therefore, the argument is reduced to proving the following proposition.

\begin{proposition} \label{prop:random_normal}
There exist constants $\mu, \gamma, c_6 \in (0, 1)$ such that for all sufficiently large $n$,
\begin{align*}
\textbf{P} \left( \exists v \in \mathcal{N}(\delta, \rho) \text{ such that }M_n'v = 0 \text{ and } \operatorname{CLCD}_{\mu n, \gamma}(v) \le e^{c_6 n} \right) \le 2^{-n}.
\end{align*}
\end{proposition}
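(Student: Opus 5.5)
We follow the level-set/net strategy of Tran (and of Rudelson--Vershynin), adapted to the signed rows. Write $\mathcal N := \mathcal N(\delta,\rho)$. For $D\in[1,e^{c_6 n}]$, let $S_D$ be the set of $v\in\mathcal N$ with $D\le \operatorname{CLCD}_{\mu n,\gamma}(v)<2D$. Since there are only $O(n)$ dyadic values of $D$, it suffices to prove $\textbf{P}(\exists v\in S_D,\ M_n'v=0)\le 2^{-2n}$ for each $D$. Very small CLCD is excluded separately. Lemma \ref{lemma:D(v) lower bound} gives $\|\mathrm D(v)\|_2\ge \frac{\delta\rho}{16}\sqrt n$, and the entries of $\mathrm D(v)$ restricted to $\sigma_1\times\sigma_2$ lie in $[\rho/\sqrt{2n},6/\sqrt{\delta n}]$. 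For $\theta\le c\sqrt{n}$ these entries of $\theta\mathrm D(v)$ lie in $(0,1/2)$, so the distance of $\theta\mathrm D(v)$ to the lattice is at least a constant fraction of its norm. Hence for $\gamma$ small, $\operatorname{CLCD}_{\mu n,\gamma}(v)\ge c_0\sqrt n$ on $\mathcal N$, and we may take $D\ge c_0\sqrt n$.

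The first step is small ball at level $D$. For $v\in S_D$, Theorem \ref{LOT} with $\alpha=\mu n$ gives
\[\mathcal L(\langle R_i,v\rangle,\varepsilon)\le C\varepsilon+C/D+Ce^{-2\mu^2 n}.\]
Because $D\le e^{c_6n}$ with $c_6<\mu^2$, this is at most $C(\varepsilon+1/D)$. The same bound holds for any $w$ with $\|w-v\|_2\le \sqrt n/D$ after enlarging $\varepsilon$ by $\sqrt n\|w-v\|_2$, but we only need it at net points. The rows are independent, so tensorization gives, for a fixed $w$,
\[\textbf{P}\big(\|M_n'w\|_2\le \varepsilon\sqrt n\big)\le \big(C\varepsilon+C/D\big)^{n-1}.\]
Applying this with $\varepsilon\approx\sqrt n/D$, say $\varepsilon= K\beta$ where $\beta=\sqrt n/D\cdot$const, yields a bound of $(C'\sqrt n/D)^{n-1}$.

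The second step is a net for $S_D$, and it is the main obstacle. We need an $\beta$-net $\mathcal G_D$ of $S_D$ with $\beta\asymp \sqrt n/D$ and $|\mathcal G_D|\le (C''D/\sqrt n)^{n}$, with the constant $C''$ beating the small-ball constant up to a factor of $2^{-3n}$. We build it as in Tran. For $v\in S_D$, let $\theta\in[D,2D)$ with $\operatorname{dist}(\theta\mathrm D(v),\mathbb Z^{\binom n2})<\min(\gamma\theta\|\mathrm D(v)\|_2,\mu n)$. Fix an anchor coordinate $i_0$ (one of $n$ choices) and define $p_j:=$ the nearest integer to $\theta(v_j-v_{i_0})$. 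Averaging over $i_0$ and using the distance bound, $\theta(v-v_{i_0}\mathbf 1)$ is within $O(\mu\sqrt n)$ of the integer vector $p$ in $\ell_2$, at least for a good choice of $i_0$, since the squared distance sums to $n$ times the row contributions.

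So $v$ is determined up to error $O(\mu\sqrt n/D)$ by three pieces of data: the integer point $p$ with $\|p\|_2\lesssim D\|\mathrm D(v)\|/\sqrt n\lesssim D$, the scalar $\theta$ from a $1/n$-grid, and the constant shift $v_{i_0}$ from a $1/D$-grid. The integer points in a ball of radius $CD$ in $\mathbb Z^n$ number $(1+CD/\sqrt n)^n$, and this is precisely where $D\ge c_0\sqrt n$ is used. The issue is that the error $\mu\sqrt n/D$ must be comparable to $\beta$ while the small-ball constant $C$ is fixed. We resolve this by choosing $\mu$ (and then $\gamma$, $c_6$) small, so that $(C\mu)^{n}\cdot C''^{n}\le 2^{-3n}$. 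The orthogonality direction of the span of $\mathbf 1$ contributes only a polynomial factor.

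The third step is the union bound. On $\{\|M_n'\|\le K\sqrt n\}$, which fails with probability at most $e^{-n}$ by Proposition \ref{prop:first_singular_value}, any $v\in S_D$ with $M_n'v=0$ gives a net point $w$ with $\|M_n'w\|_2\le K\sqrt n\,\beta$. The union bound then gives
\[|\mathcal G_D|\,(CK\mu\sqrt n/D+C/D)^{n-1}\lesssim (C''D/\sqrt n)^n (C'''\mu\sqrt n/D)^{n-1}\le 2^{-2n},\]
where the $1/D$ term is dominated since $D\ge c_0\sqrt n$ forces $1/D\lesssim \mu\sqrt n/D$ only if $\mu\gtrsim1/\sqrt n$. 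Because of this, I would instead keep the full $\varepsilon$-term and take $\beta=\mu\sqrt n/D$ with $\mu$ small. The care needed in balancing $\mu$ against $C$ and the net constant is exactly the step I expect to require the most work. Summing over $O(n)$ values of $D$ finishes the proof.
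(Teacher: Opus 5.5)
Your architecture is the paper's: dyadic CLCD level sets, the $\Omega(\sqrt n)$ lower bound from non-almost-constancy (Lemma \ref{lemma:large CLCD}), a net built from an anchor coordinate and the lattice point near $\theta\mathrm D(v)$, small ball via Theorem \ref{LOT} plus tensorization, and a union bound on $\{\|M_n'\|\le K\sqrt n\}$ with $\mu$ small against a net constant independent of $\mu$.

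\textbf{The gap: the small-ball estimate at the net points.} Your net points $\lambda_0\mathbf 1+p/\theta_0$ are not in $S_D$, so Theorem \ref{LOT} says nothing about them directly. The mechanism you offer, enlarging $\varepsilon$ by $\sqrt n\|w-v\|_2$ via $|\langle R_i,w-v\rangle|\le\|R_i\|_2\|w-v\|_2$, loses a factor $\sqrt n$.
\begin{itemize}
\item With $\|w-v\|_2\asymp\mu\sqrt n/D$, the per-row scale becomes $\asymp\mu n/D$.
\item Tensorization then only gives $(C\mu n/D)^{n-1}$.
\item Against $|\mathcal G_D|\approx(C''D/\sqrt n)^n$, the product is of order $(CC''\mu\sqrt n)^n$, which diverges.
\end{itemize}
In your third step you nevertheless use the unenlarged bound $C(\varepsilon+1/D)$ at net points, which is unjustified as written.

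The paper fills this with its ``CLCD on the net'' lemma. By the stability Lemma \ref{lemma:clcd_stability}, every net point has $\operatorname{CLCD}_{\mu n/2,\gamma/2}(w)\ge H/16$ and $\|\mathrm D(w)\|_2\gtrsim_{\delta,\rho}\sqrt n$; this is exactly where $\mu\ll\gamma$ is needed.

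A simpler repair within your setup:
\begin{itemize}
\item For each net point $w$, fix some $v_w\in S_D$ with $\|w-v_w\|_2\le\beta$.
\item On the norm event, $M_n'v=0$ for some $v\in S_D$ gives $\|M_n'v_w\|_2\le 2K\sqrt n\beta$ for the corresponding $w$.
\item Union-bound over $\{v_w\}\subset S_D$, to which Theorem \ref{LOT} applies verbatim.
\end{itemize}
The perturbation has to be absorbed at the matrix level through $\|M_n'\|$, not row by row. This route also removes the need for $\mu\ll\gamma$.

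\textbf{Smaller corrections.}
\begin{itemize}
\item \emph{Shift grid.} The shift $v_{i_0}$ must come from a $\mu/D$-grid (as in the paper's $\Lambda=\frac{\mu}{2H}\mathbb Z$), not a $1/D$-grid. An error of $1/D$ in the shift costs $\sqrt n/D$ in $\ell_2$. That makes the net scale $\sqrt n/D$ rather than $\mu\sqrt n/D$, and the final product becomes of order $(CC''K)^n$, which is not small. The finer grid costs only a polynomial factor.
\item \emph{Choice of anchor.} The anchor $i_0$ must be good simultaneously for the lattice distance and for $\|v-v_{i_0}\mathbf 1\|_2=O(1)$. Otherwise $\|p\|_2$ can be of order $D\sqrt n$ and the lattice-point count loses $n^{n/2}$. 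The paper restricts to $|v_{i_0}|\le\sqrt{2/n}$; applying Markov to both averages over $i_0$ also works.
\item \emph{Order of constants.} The constant $C$ in Theorem \ref{LOT} depends on $\gamma$. So fix $\gamma<\delta\rho/12$ first, then $\mu$, then $c_6$ small (in particular $c_6\le 2\mu^2$). Choosing $\gamma$ after $\mu$, as your wording suggests, would be circular.
\item \emph{The $C/D$ term.} Your worry here is unfounded. $C/D\le CK\mu\sqrt n/D$ as soon as $K\mu\sqrt n\ge 1$, which holds for large $n$ with $\mu$ fixed; the paper's requirement $n\ge(32/c)^2$ is the same observation.
\end{itemize}
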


In the following, we establish the fundamental properties of the CLCD required for the proof of Proposition \ref{prop:random_normal}. 

A crucial property of the CLCD is its approximate stability under small perturbations. This property allows us to discretize the range of possible random normal vectors.

\begin{lemma}[{Stability of CLCD, \cite[Lemma 2.14]{tran2020smallestsingularvaluerandom}}]\label{lemma:clcd_stability}
Consider a vector $v \in \mathbb{R}^n$ and parameters $\alpha > 0$, $\gamma \in (0, 1)$. For any $w \in \mathbb{R}^n$ satisfying $ \|v - w\|_2 < \frac{\gamma \|D(v)\|_2}{5\sqrt{n}},$
we have
\begin{align*}
 \operatorname{CLCD}_{\alpha/2, \gamma/2}(w) \ge \min \left\{ \operatorname{CLCD}_{\alpha, \gamma}(v), \frac{\alpha}{4\sqrt{n}\|v - w\|_2} \right\}.
\end{align*}
\end{lemma}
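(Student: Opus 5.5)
The plan is to argue directly from the definition of the CLCD, in the style of the stability lemma for the classical LCD in \cite{MR2407948}. Write $D:=\mathrm{D}$, so that $D$ is linear and
\begin{align*}
\|D(x)\|_2^2=n\|x\|_2^2-\Big(\sum_i x_i\Big)^2\le n\|x\|_2^2 .
\end{align*}
Consequently $\|D(v)-D(w)\|_2\le \sqrt n\,\|v-w\|_2=:\sqrt n\,\eta$. The hypothesis then reads $\sqrt n\,\eta<\gamma\|D(v)\|_2/5$.

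Let $\theta>0$ be any element of the defining set for $\operatorname{CLCD}_{\alpha/2,\gamma/2}(w)$. This means there is $p\in\mathbb{Z}^{\binom n2}$ with
\begin{align*}
\|\theta D(w)-p\|_2<\min\big(\tfrac{\gamma}{2}\|\theta D(w)\|_2,\ \tfrac{\alpha}{2}\big).
\end{align*}
If $\theta\ge \alpha/(4\sqrt n\,\eta)$, there is nothing to prove for this $\theta$. So assume $\theta<\alpha/(4\sqrt n\,\eta)$; we will show that $\theta$ lies in the defining set for $\operatorname{CLCD}_{\alpha,\gamma}(v)$. Taking the infimum over $\theta$ then gives the claimed lower bound.

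In this case the triangle inequality gives
\begin{align*}
\|\theta D(v)-p\|_2\le \|\theta D(w)-p\|_2+\theta\sqrt n\,\eta .
\end{align*}
We check the two parts of the minimum separately.

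For the $\alpha$-part, $\theta\sqrt n\,\eta<\alpha/4$, so $\|\theta D(v)-p\|_2<\alpha/2+\alpha/4<\alpha$.

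For the $\gamma$-part, first note that $\|D(w)\|_2\le \|D(v)\|_2+\sqrt n\,\eta\le (1+\gamma/5)\|D(v)\|_2$. Also $\theta\sqrt n\,\eta<\tfrac{\gamma}{5}\theta\|D(v)\|_2$. Hence
\begin{align*}
\|\theta D(v)-p\|_2<\tfrac{\gamma}{2}(1+\tfrac{\gamma}{5})\theta\|D(v)\|_2+\tfrac{\gamma}{5}\theta\|D(v)\|_2 .
\end{align*}
The right-hand side is at most $\gamma\,\theta\|D(v)\|_2$, because $\tfrac12+\tfrac{\gamma}{10}+\tfrac15<1$ for $\gamma<1$.

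Combining the two parts, $\operatorname{dist}(\theta D(v),\mathbb{Z}^{\binom n2})<\min(\gamma\|\theta D(v)\|_2,\alpha)$. Therefore $\theta\ge \operatorname{CLCD}_{\alpha,\gamma}(v)$. This shows that every admissible $\theta$ for $w$ is at least the minimum of the two quantities in the statement, which proves the lemma.

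The only delicate point is the $\gamma$-part. There the slack from halving $\gamma$ has to absorb both the change in $\|D(\cdot)\|_2$ and the additive error $\theta\sqrt n\,\eta$. This is exactly where the threshold $\gamma\|D(v)\|_2/(5\sqrt n)$ in the hypothesis is used.
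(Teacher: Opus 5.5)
Your proof is correct. You take an arbitrary admissible $\theta$ for $\operatorname{CLCD}_{\alpha/2,\gamma/2}(w)$ and dispose of the case $\theta\ge \alpha/(4\sqrt n\,\|v-w\|_2)$; otherwise you use linearity of $\mathrm{D}$, the bound $\|\mathrm{D}(x)\|_2\le\sqrt n\,\|x\|_2$ and the triangle inequality to show that both the $\alpha$- and $\gamma$-constraints hold for $v$. The paper gives no proof of its own and simply cites Tran's Lemma 2.14, which, as far as I recall, is proved by this same direct argument adapted from the Rudelson--Vershynin LCD stability lemma.
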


The next lemma provides a lower bound on the CLCD for non-almost-constant vectors.
\begin{lemma}[{Non-almost-constant vectors have large CLCD, \cite[Lemma 2.15]{tran2020smallestsingularvaluerandom}}]\label{lemma:large CLCD}
Let $\delta, \rho \in (0, 1)$, and fix $v \in \mathcal{N}(\delta, \rho)$. Then for every $\alpha > 0$ and every $\gamma \in \left(0, \frac{1}{12} \delta \rho\right)$, we have
\begin{align*}
\operatorname{CLCD}_{\alpha, \gamma}(v) \geq \frac{1}{7} \sqrt{\delta n}.
\end{align*}
\end{lemma}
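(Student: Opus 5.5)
We need to prove Lemma \ref{lemma:large CLCD}: for $v\in\mathcal N(\delta,\rho)$, $\alpha>0$ and $\gamma<\delta\rho/12$, we have $\operatorname{CLCD}_{\alpha,\gamma}(v)\ge \sqrt{\delta n}/7$. Since the condition defining the CLCD requires $\operatorname{dist}(\theta \mathrm D(v),\mathbb Z^{\binom n2})<\gamma\|\theta\mathrm D(v)\|_2$, it suffices to show the following: for every $0<\theta<\sqrt{\delta n}/7$,
\begin{align*}
\operatorname{dist}(\theta \mathrm D(v),\mathbb Z^{\binom n2})\ge \gamma\|\theta\mathrm D(v)\|_2 .
\end{align*}
The $\alpha$ part of the minimum only makes the condition harder to satisfy, so it can be ignored.

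The upper side of this comparison is easy. Since $\|v\|_2=1$,
\begin{align*}
\|\mathrm D(v)\|_2^2=n\|v\|_2^2-\Big(\sum_i v_i\Big)^2\le n,
\end{align*}
so $\gamma\|\theta \mathrm D(v)\|_2\le \gamma\theta\sqrt n$.

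For the lower side, we use Lemma \ref{lemma:D(v) lower bound}. It gives disjoint sets $\sigma_1,\sigma_2$, each of size at least $\delta n/8$, such that $\rho/\sqrt{2n}\le |v_i-v_j|\le 6/\sqrt{\delta n}$ for all $i\in\sigma_1$, $j\in\sigma_2$. For these pairs, $\theta<\sqrt{\delta n}/7$ gives $|\theta(v_i-v_j)|< 6/7$. Two cases arise.

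If $|\theta(v_i-v_j)|\le 1/2$, the nearest integer is $0$, and the distance is at least $\theta\rho/\sqrt{2n}$. If $|\theta(v_i-v_j)|\in(1/2,6/7)$, the nearest integer is $\pm1$, and the distance is at least $1/7$. Since $\theta<\sqrt{\delta n}/7$ and $\rho<1$, we have $\theta\rho/\sqrt{2n}\le \rho\sqrt\delta/(7\sqrt2)<1/7$. So in both cases every such coordinate is at distance at least $\theta\rho/\sqrt{2n}$ from $\mathbb Z$.

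There are at least $|\sigma_1||\sigma_2|\ge \delta^2n^2/64$ such pairs. Each unordered pair appears once in $\mathrm D(v)$, and since $\sigma_1,\sigma_2$ are disjoint, distinct $(i,j)$ give distinct coordinates. Summing squares over these coordinates gives
\begin{align*}
\operatorname{dist}(\theta\mathrm D(v),\mathbb Z^{\binom n2})\ge \frac{\delta n}{8}\cdot\frac{\theta\rho}{\sqrt{2n}}=\frac{\delta\rho\,\theta\sqrt n}{8\sqrt2}.
\end{align*}

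It remains to compare the two sides. We have $\delta\rho/(8\sqrt2)\ge\delta\rho/12>\gamma$, so the distance is at least $\gamma\theta\sqrt n\ge\gamma\|\theta\mathrm D(v)\|_2$. Hence no $\theta<\sqrt{\delta n}/7$ belongs to the defining set, and the infimum is at least $\sqrt{\delta n}/7$.

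The main point requiring care is the second case: there the fractional part may sit near $1$. This is exactly why the restriction $\theta<\sqrt{\delta n}/7$, combined with the upper bound $6/\sqrt{\delta n}$ from Lemma \ref{lemma:D(v) lower bound}, is needed to keep $|\theta(v_i-v_j)|$ bounded away from $1$.
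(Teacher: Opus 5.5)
Your proof is correct. The paper gives no argument of its own here and simply cites Tran's Lemma 2.15, and your route is the standard one the constants $1/7$ and $\delta\rho/12$ (via $8\sqrt2<12$) are built for: bound $\|\mathrm{D}(v)\|_2\le\sqrt n$, then use the pairs in $\sigma_1\times\sigma_2$ from Lemma \ref{lemma:D(v) lower bound}, where $|\theta(v_i-v_j)|<6/7$ forces $\operatorname{dist}(\theta(v_i-v_j),\mathbb{Z})\ge\theta\rho/\sqrt{2n}$, to get $\operatorname{dist}(\theta\mathrm{D}(v),\mathbb{Z}^{\binom{n}{2}})\ge\delta\rho\,\theta\sqrt n/(8\sqrt2)$.
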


To study the set of non-almost-constant vectors $\mathcal{N}(\delta, \rho)$, we partition it into level sets according to the magnitude of the CLCD. It suffices to show that, with high probability, the normal vector does not belong to any level set associated with a small CLCD.

The argument proceeds by constructing an approximating net of controlled cardinality. Exploiting the stability of the CLCD under small perturbations, we observe that if the normal vector has a small CLCD, then there exists a vector in the net with a comparably small CLCD. By applying small ball probability estimates to individual vectors in the net and taking a union bound, we obtain the desired probability bound.

Unless stated otherwise, we assume throughout this section that the parameters satisfy the hierarchy:
\begin{align}\label{eq:constants}
    0 < \delta, \rho \ll 1, \quad \text{and} \quad 0 < \mu \ll_{\delta, \rho} \gamma \ll_{\delta, \rho} 1.
\end{align}

Let $H_0 := \frac{1}{7}\sqrt{\delta n}$. By Lemma \ref{lemma:large CLCD}, we have the lower bound $\operatorname{CLCD}_{\alpha, \gamma}(v) \ge H_0$ for every $v \in \mathcal{N}(\delta, \rho)$. This motivates the following filtration of the sphere based on the CLCD.

\begin{mydef}[Level sets of CLCD]
Let $H \ge H_0/2$. We define the level set $S_H \subseteq \mathbb{S}^{n-1}$ associated with the parameter $H$ as
\begin{align*}
S_H := \left\{ v \in \mathcal{N}(\delta, \rho) : H \le \operatorname{CLCD}_{\mu n, \gamma}(v) \le 2H \right\}.
\end{align*}   
\end{mydef}

To bound the small ball probability, we recall the tensorization lemma of Rudelson and Vershynin \cite[Lemma 2.2]{MR2407948}.

\begin{lemma}[Tensorization] \label{lemma:tensorization}
Let $\varepsilon_0 \in (0, 1)$ and $B \ge 1$. Let $X_1, \dots, X_m$ be independent
 random variables such that each $X_i$ satisfies
\begin{align*}
\textbf{P}\left(|X_i| \le \varepsilon\right) \le B\varepsilon \quad \text{for all } \varepsilon \ge \varepsilon_0.
\end{align*}
Then there exists a universal constant $C > 0$ such that
\begin{align*}
\textbf{P}\left( \|(X_1, \dots, X_m)\|_2 \le \varepsilon \sqrt{m} \right) \le (CB\varepsilon)^m \quad \text{for every } \varepsilon \ge \varepsilon_0.
\end{align*}
\end{lemma}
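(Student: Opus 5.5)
The tensorization lemma is a statement about arbitrary independent random variables and does not use the matrix model, so I would reproduce the standard Rudelson--Vershynin argument via the exponential moment (Laplace transform) method. Fix $\varepsilon \ge \varepsilon_0$. By Markov's inequality applied to $\exp(-t\sum_i X_i^2/\varepsilon^2)$ with $t=1$, and using independence,
\begin{align*}
\textbf{P}\left(\sum_{i=1}^m X_i^2 \le \varepsilon^2 m\right) \le e^{m}\,\textbf{E}\exp\left(-\frac{1}{\varepsilon^2}\sum_{i=1}^m X_i^2\right) = e^{m}\prod_{i=1}^m \textbf{E}\exp\left(-\frac{X_i^2}{\varepsilon^2}\right).
\end{align*}
It then suffices to show that each factor satisfies $\textbf{E}\exp(-X_i^2/\varepsilon^2) \le C' B\varepsilon$ for an absolute $C'$. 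With $C = eC'$ this gives the claimed bound $(CB\varepsilon)^m$.

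To bound a single factor, I would write it as a layer-cake integral:
\begin{align*}
\textbf{E}\exp\left(-\frac{X_i^2}{\varepsilon^2}\right) = \int_0^1 \textbf{P}\left(e^{-X_i^2/\varepsilon^2} > s\right)\,ds = \int_0^\infty 2u e^{-u^2}\,\textbf{P}(|X_i| < \varepsilon u)\,du,
\end{align*}
after the substitution $s = e^{-u^2}$. I would then split the range of $u$.

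For $u \ge 1$ we have $\varepsilon u \ge \varepsilon_0$, so the hypothesis applies and gives $\textbf{P}(|X_i| < \varepsilon u) \le B\varepsilon u$. For $u<1$, monotonicity gives $\textbf{P}(|X_i| < \varepsilon u) \le \textbf{P}(|X_i|\le\varepsilon) \le B\varepsilon$, which is valid since $\varepsilon\ge\varepsilon_0$. Combining the two ranges yields
\begin{align*}
\textbf{E}\exp\left(-\frac{X_i^2}{\varepsilon^2}\right) \le B\varepsilon\int_0^\infty 2u\max(1,u)e^{-u^2}\,du \le C' B\varepsilon.
\end{align*}

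The only delicate point is this restriction $\varepsilon\ge\varepsilon_0$: the hypothesis gives no information below scale $\varepsilon_0$, and the short range $u<1$ is exactly where this matters. Using monotonicity to bound it by the value at $\varepsilon$ itself resolves it. No other obstacle arises, and the constant $C$ is universal because $\int_0^\infty 2u\max(1,u)e^{-u^2}du$ is an absolute number.
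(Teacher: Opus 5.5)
Your proof is correct. It is exactly the standard Rudelson--Vershynin argument: Markov's inequality applied to $\exp(m-\sum_i X_i^2/\varepsilon^2)$, factorization by independence, and the layer-cake bound on each factor $\exp(-X_i^2/\varepsilon^2)$ split at $u=1$. The paper gives no proof of its own and simply cites their Lemma 2.2, so you have reproduced the argument it relies on.
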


The tensorization lemma tells us how to bound the anti-concentration of the random vector $M_n v$ for a fixed $v$. Specifically, let $R_1, \dots, R_n$ denote the independent rows of the matrix $M_n$. We observe that $\|M_n v\|_2^2 = \sum_{i=1}^n \langle R_i, v \rangle^2$.
We apply Lemma \ref{lemma:tensorization} to the random variables $X_i := \langle R_i, v \rangle$. 
In particular,  we have
\[
X_i=\langle R_i,v\rangle \stackrel{d}{=} W_v,
\]
where \(W_v\) is defined in \eqref{W_v}.
Moreover, we can use Theorem \ref{LOT} to bound the Lévy concentration function of each $X_i$. This yields the following lemma.

\begin{lemma}\label{lemma:Invertibility_on_a_single_vector_via_small_ball_probability}
For any $b > 0$ and $\mu, \gamma \in (0, 1)$, there exist $C_7,c_7  > 0$ such that the following hold. For any $v \in \mathbb{R}^n$ with $\|\mathrm{D}(v)\|_2 \geq b\sqrt{n}$ and any $\varepsilon \geq \frac{1}{\operatorname{CLCD}_{\mu n, \gamma}(v)} + e^{-c_7 n}$, we have
\begin{align*}
\textbf{P} \left( \|M_n v\|_2 \leq \varepsilon \sqrt{n} \right) \le (C_7 \varepsilon)^n.
\end{align*}
\end{lemma}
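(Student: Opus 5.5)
We combine Theorem \ref{LOT} with the tensorization Lemma \ref{lemma:tensorization}. Fix $v$ with $\|\mathrm{D}(v)\|_2 \ge b\sqrt n$. Let $R_1,\dots,R_n$ be the rows of $M_n$ and set $X_i := \langle R_i, v\rangle$. These are independent and each has the law of $W_v$. Applying Theorem \ref{LOT} with $\alpha=\mu n$ and the given $\gamma$ yields a constant $C=C(b,\gamma)$ such that for every $\varepsilon\ge 0$,
\begin{align*}
\textbf{P}(|X_i|\le \varepsilon)\le \mathcal L(W_v,\varepsilon)\le C\varepsilon+\frac{C}{\operatorname{CLCD}_{\mu n,\gamma}(v)}+Ce^{-2\mu^2 n}.
\end{align*}
The last term is exponentially small in $n$ because $\alpha^2/n=\mu^2 n$; this is exactly why we take $\alpha$ linear in $n$.

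Next we choose $c_7 := \min(2\mu^2,1)$, or any smaller positive constant, and define the threshold $\varepsilon_0 := \operatorname{CLCD}_{\mu n,\gamma}(v)^{-1}+e^{-c_7 n}$. For $\varepsilon\ge\varepsilon_0$ we have $1/\operatorname{CLCD}_{\mu n,\gamma}(v)\le\varepsilon$ and $e^{-2\mu^2 n}\le e^{-c_7 n}\le \varepsilon$. Hence $\textbf{P}(|X_i|\le\varepsilon)\le 3C\varepsilon$ for all $\varepsilon\ge\varepsilon_0$. Lemma \ref{lemma:tensorization} requires $\varepsilon_0<1$ and $B\ge 1$. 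If $\varepsilon_0\ge1$, or more generally if $3C'\varepsilon\ge 1$ for the final constant $C'$, the claimed bound $(C_7\varepsilon)^n\ge1$ holds trivially. So we may assume $\varepsilon_0<1$ and apply the lemma with $B:=\max(3C,1)$ and $m=n$. This gives
\begin{align*}
\textbf{P}\big(\|M_nv\|_2\le\varepsilon\sqrt n\big)=\textbf{P}\big(\|(X_1,\dots,X_n)\|_2\le\varepsilon\sqrt n\big)\le (C''B\varepsilon)^n,
\end{align*}
and we set $C_7:=C''B$. This constant depends only on $b,\gamma$ (and $\mu$ through $c_7$), as required.

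The main point to verify is that the hypotheses of Theorem \ref{LOT} are met uniformly, so that its constant does not depend on $v$ or $n$; this holds because the only assumption is $\|\mathrm D(v)\|_2\ge b\sqrt n$. Everything else is bookkeeping: the additive error terms in Theorem \ref{LOT} are absorbed into $\varepsilon$ by the lower bound on $\varepsilon$. We do not need $v$ to be a unit vector; we only use $\|\mathrm D(v)\|_2\ge b\sqrt n$ and the CLCD.
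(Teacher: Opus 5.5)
Your proposal is correct and matches the paper's argument, which the paper only sketches in the paragraph before the lemma: apply Theorem \ref{LOT} with $\alpha=\mu n$ to each $X_i=\langle R_i,v\rangle$, absorb the $1/\operatorname{CLCD}_{\mu n,\gamma}(v)$ and $e^{-2\mu^2 n}$ terms into $\varepsilon$ using the lower bound on $\varepsilon$, and tensorize with Lemma \ref{lemma:tensorization}. Your explicit choice $c_7=\min(2\mu^2,1)$ and your treatment of the trivial case $\varepsilon_0\ge 1$ just fill in bookkeeping the paper leaves unstated.
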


\begin{lemma}\label{lemma:discretization_level_sets}
Assume that the parameters $\delta, \rho, \mu$ and $\gamma$ satisfy \eqref{eq:constants}. Then there exists a net $\mathcal{F}\subset S_H + \frac{4\mu\sqrt n}{H}B_2^n$ of cardinality at most $\mu^{-2} H^2 \cdot (C_8 H/\sqrt{n})^n$ such that for any $v$ in $S_H$, there exists a vector $w$ in $\mathcal{F}$ such that $\|v-w\|_2\le4\mu\sqrt{n}/H$. 
\end{lemma}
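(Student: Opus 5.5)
Fix $v\in S_H$ and write $\theta:=\operatorname{CLCD}_{\mu n,\gamma}(v)\in[H,2H]$. The plan is to follow Tran's discretization of level sets, as in the analogous lemma of \cite{tran2020smallestsingularvaluerandom}. The approximants will be built from the lattice point that witnesses the CLCD.

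By definition of the infimum, there is $\theta'\in[\theta,\theta+\eta]$ (with $\eta$ tiny, say $\theta'\le 2H+1$) and $p\in\mathbb{Z}^{\binom n2}$ such that
\begin{align*}
\|\theta'\mathrm{D}(v)-p\|_2<\min(\gamma\|\theta'\mathrm{D}(v)\|_2,\mu n)\le \mu n .
\end{align*}
We want to recover $v$ from $p$ up to an additive constant. Consider the linear map $\mathrm{D}:\mathbb{R}^n\to\mathbb{R}^{\binom n2}$. Its kernel is $\mathrm{span}(\mathbf 1)$, and on $\mathbf 1^\perp$ one has $\|\mathrm{D}(x)\|_2=\sqrt n\|x\|_2$, because $\sum_{i<j}(x_i-x_j)^2=n\|x\|_2^2-(\sum x_i)^2$. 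Hence the pseudo-inverse $\mathrm{D}^+$ has operator norm $1/\sqrt n$. Setting $z:=\mathrm{D}^+p/\theta'$, we get
\begin{align*}
\|(v-\bar v\mathbf 1)-z\|_2\le \frac{\mu n}{\sqrt n\,\theta'}\le\frac{\mu\sqrt n}{H},
\end{align*}
where $\bar v$ is the mean of the coordinates of $v$.

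To complete the approximation, we also approximate the scalar $\bar v\in[-1,1]$ and the scalar $\theta'\in[H,2H+1]$. We use grids of mesh about $\mu/H$ for $\bar v$ and about $\mu/\sqrt n$ relative scale for $\theta'$; this contributes the $\mu^{-2}H^2$ factor (up to adjusting constants, using $H\gtrsim\sqrt{\delta n}$). The errors from rounding $\bar v$ contribute at most $\sqrt n\cdot\mu/H$. The error from replacing $\theta'$ by a grid value $\theta_0$ with $|\theta'-\theta_0|/\theta'\le\mu/\sqrt n$ contributes $\|z\|_2\cdot\mu/\sqrt n\lesssim \mu/\sqrt n$, since $\|z\|_2\le 2$. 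Altogether this gives total error at most $4\mu\sqrt n/H$.

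The counting step, which I expect to be the main obstacle, bounds the number of admissible lattice points $p$. Since $\|p\|_2\le \theta'\|\mathrm{D}(v)\|_2+\mu n\le (2H+1)\sqrt n+\mu n\lesssim H\sqrt n$, and $p$ lies within $\mu n$ of the $(n-1)$-dimensional subspace $\mathrm{Im}(\mathrm{D})$, naive counting in $\mathbb{Z}^{\binom n2}$ is far too large. We therefore count only the images $\mathrm{D}^+p$ after rounding. The vector $\theta_0 z=\mathrm{D}^+p$ lies in the ball of radius $\lesssim H$ in $\mathbf 1^\perp$. We replace it by a point of a $\mu\sqrt n$-net of that ball. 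Dividing by $\theta_0$, this costs an extra $\mu\sqrt n/H$ error, which is absorbed into the constant $4$. By the volumetric estimate (Lemma \ref{NONet}, scaled), that net has cardinality $(1+CH/(\mu\sqrt n))^n\le (C_8H/\sqrt n)^n$, absorbing $\mu$ into $C_8$ given $H\ge H_0/2\gtrsim\sqrt{\delta n}$ and the hierarchy \eqref{eq:constants}. Note that $C_8$ then depends on $\mu$; if an absolute $C_8$ is needed, one should instead count lattice points of $\mathbb{Z}^{\binom n2}$ near $\mathrm{Im}(\mathrm{D})$ directly, as Tran does, and this is the delicate part.

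Finally, to place the net inside $S_H+\frac{4\mu\sqrt n}{H}B_2^n$, we keep only those net points that are within $4\mu\sqrt n/H$ of some $v\in S_H$. For each such point we may replace it by such a $v$ itself, at the cost of doubling the radius, which is again absorbed by adjusting constants. The cardinality is the product of the three counts, giving the claimed bound $\mu^{-2}H^2(C_8H/\sqrt n)^n$.
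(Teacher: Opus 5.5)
Your approximation step is sound: on $\mathbf 1^\perp$ one has $\|\mathrm{D}(x)\|_2=\sqrt n\|x\|_2$, so $\|(v-\bar v\mathbf 1)-\mathrm{D}^+p/\theta'\|_2\le\mu\sqrt n/H$. The gap is in the counting step, which you yourself leave open, and that step is where the lemma's content lies.

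\textbf{Why the generic net fails.} Covering a ball of radius $\asymp H$ by a generic $\mu\sqrt n$-net costs $(CH/(\mu\sqrt n))^n$ points, so your $C_8\asymp 1/\mu$. The lemma is only usable with $C_8$ independent of $\mu$ (it may depend on $\delta$). In the proof of Lemma \ref{lemma:Invertibility_on_a_level_set}, the union bound $|\mathcal F|\,(C_7c\sqrt n/H)^{n-1}\le\mu^{-2}H^3(C_7C_8c)^n$ forces $c\lesssim 1/(C_7C_8)$. Passing from the net back to $S_H$ forces $8K\mu\le c$. With $C_8\asymp 1/\mu$, $\mu$ cancels from the compatibility condition $8K\mu\le c\lesssim 1/(C_7C_8)$, so it can no longer be met by taking $\mu$ small. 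Put differently, a generic net at scale $\mu\sqrt n/H$ is exactly what the single-vector bound $(C_7\varepsilon)^n$ cannot pay for. Vectors of small CLCD must be shown to admit a net smaller by a factor $\mu^{n}$.

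\textbf{Why the pseudo-inverse cannot supply this.} Exploiting integrality does not rescue it. Since $\mathrm{D}^\top\mathrm{D}=nI-\mathbf 1\mathbf 1^\top$, we have $\mathrm{D}^+=\frac1n\mathrm{D}^\top$, so $\mathrm{D}^+p\in\frac1n\mathbb{Z}^n$. That lattice is so fine that it has roughly $(C\sqrt n\,H)^n$ points in the relevant ball. Averaging over all rows of $p$ destroys the arithmetic structure. Also, neither the paper nor Tran counts lattice points of $\mathbb{Z}^{\binom n2}$ near $\mathrm{Im}(\mathrm{D})$, so that is not the fix either.

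\textbf{The missing idea: use a single pivot row of $p$.} Let $v^{(j)}$ be the vector of differences $v_i-v_j$ ($i<j$) and $v_j-v_k$ ($k>j$), and let $p^{(j)}$ be the matching entries of $p$.
\begin{enumerate}
\item Each pair occurs in exactly two rows, so $\sum_j\|\theta'v^{(j)}-p^{(j)}\|_2^2=2\|\theta'\mathrm{D}(v)-p\|_2^2<2\mu^2n^2$.
\item Pigeonholing over the at least $n/2$ indices with $|v_j|\le\sqrt{2/n}$ gives such a $j$ with $\|v^{(j)}-p^{(j)}/\theta'\|_2\le 2\mu\sqrt n/H$.
\item Hence $v$ lies within $2\mu\sqrt n/H$ of $v_j\mathbf 1+q/\theta'$, where $q\in\mathbb{Z}^n$ is genuinely integral: $q_i=p_{ij}$ for $i<j$, $q_k=-p_{jk}$ for $k>j$, and $q_j=0$.
\item The condition $|v_j|\le\sqrt{2/n}$ gives $\|v^{(j)}\|_2\le\sqrt6$, hence $\|q\|_2\le\sqrt6\,\theta'+2\mu\sqrt n\le 7H$.
\item Lattice points in a ball of radius $7H$ number at most $(1+CH/\sqrt n)^n\le (C_\delta H/\sqrt n)^n$, using $H\gtrsim\sqrt{\delta n}$. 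This has no dependence on $\mu$.
\item Rounding $v_j$ and $\theta'$ gives the factor $\mu^{-2}H^2$, and the factor $n$ for the choice of $j$ is absorbed into $C_8^n$.
\end{enumerate}

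A smaller slip: a grid for $\theta'$ with relative mesh $\mu/\sqrt n$ leaves an error of order $\mu/\sqrt n$. That exceeds $4\mu\sqrt n/H$ once $H\gg n$, so the mesh should be absolute, of order $\mu$, as in the paper.
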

\begin{proof}
We study the additive structure of the level set $S_H$ to construct an approximating net with controlled cardinality. Fix a vector $v = (v_1, \dots, v_n) \in S_H$ and let
\begin{align*}
    \sigma := \left\{ i \in [n] : |v_i| \le \sqrt{2/n} \right\}.
\end{align*}
Given the normalization $\|v\|_2 = 1$, it follows from a simple calculation that $|\sigma| \ge n/2$. Let $T := \operatorname{CLCD}_{\mu n, \gamma}(v)$. By the definition of the level set $S_H$, we have $H \le T < 2H$. 
According to the definition of the CLCD, there exists a lattice point $p = (p_{ij})_{1 \le i < j \le n} \in \mathbb{Z}^{\binom{n}{2}}$ such that 
\begin{align}\label{eq:distance1}
    \|T D(v) - p\|_2 < \mu n. 
\end{align}
For each $1 \le j \le n$, we define the auxiliary vectors $v^{(j)} \in \mathbb{R}^{n-1}$ and $p^{(j)} \in \mathbb{Z}^{n-1}$ as follows:
\begin{align*}
    v^{(j)} &:= (v_1 - v_j, \dots, v_{j-1} - v_j, v_{j} - v_{j+1}, \dots, v_j - v_n), \\
    p^{(j)} &:= (p_{1j}, \dots, p_{j-1\,j}, p_{j\,j+1}, \dots, p_{jn}).
\end{align*}
\eqref{eq:distance1} implies the following distance estimate:
\begin{align*}
    \sum_{j \in [n]} \|Tv^{(j)} - p^{(j)}\|_2^2 = 2\|T D(v) - p\|_2^2 < 2(\mu n)^2.
\end{align*}
Recalling that $|\sigma| \ge n/2$, a pigeonhole argument ensures the existence of an index $j \in \sigma$ such that 
\begin{align*}
    \|Tv^{(j)} - p^{(j)}\|_2^2 \le \frac{2(\mu n)^2}{|\sigma|} \le 4\mu^2 n.
\end{align*}
Normalizing by $T$ and utilizing the lower bound $T \ge H$, we obtain that for some $j \in [n]$:
\begin{align}\label{eq:distance2}
    \left\| v^{(j)} - \frac{p^{(j)}}{T} \right\|_2 \le \frac{2\mu \sqrt{n}}{T} \le \frac{2\mu \sqrt{n}}{H}. 
\end{align}
To control the magnitude of the lattice point $p^{(j)}$, we first observe that by the inequality $(x+y)^2 \le 2x^2 + 2y^2$,
\begin{align*}
    \|v^{(j)}\|_2^2 \le 2n v_j^2 + 2(v_1^2 + \dots + v_n^2) \le 6,
\end{align*}
where we have used the bound $|v_j| \le \sqrt{2/n}$ and the unit norm assumption $\|v\|_2 = 1$. Combining this bound with \eqref{eq:distance2} yields
\begin{align*}
    \|p^{(j)}\|_2 \le T\|v^{(j)}\|_2 + \|Tv^{(j)} - p^{(j)}\|_2 \le 2\sqrt{6}H  + 2\mu \sqrt{n} \le 7H,
\end{align*}
where the last inequality follows from $H \ge \frac{1}{14}\sqrt{\delta n}$ .

To localize the original vector $v$, we discretize the respective ranges of $v_j$ and $T$. Let $\Lambda$ and $\Theta$ be the discrete sets defined as follows:
\begin{align*}
    \Lambda := \frac{\mu}{2H} \mathbb{Z} \cap [-1, 1], \quad \Theta := \frac{1}{7} \mu \mathbb{Z} \cap [H, 2H].
\end{align*}
For any $j \in \sigma$ and $T \in [H, 2H]$, we can find $\lambda_0 \in \Lambda$ and $T_0 \in \Theta$ satisfying the approximation bounds,
\begin{align*}
    |v_j - \lambda_0| \le \frac{\mu}{2H}, \quad |T - T_0| \le \mu/7.
\end{align*}
Defining the vector $w = \left( \frac{p_{1j}}{T_0} + \lambda_0, \cdots, \frac{p_{j-1\,j}}{T_0} + \lambda_0, \lambda_0, -\frac{p_{j\,j+1}}{T_0} + \lambda_0, \cdots, -\frac{p_{jn}}{T_0} + \lambda_0 \right)$, then we obtain
\begin{align*}
\|v - w\|_2^2& = \sum_{i=1}^{j-1} \left( v_i - \frac{p_{ij}}{T_0} - \lambda_0 \right)^2+ (v_j - \lambda_0)^2 + \sum_{k=j+1}^{n} \left( v_k + \frac{p_{jk}}{T_0} - \lambda_0 \right)^2 \\
&\leq 3 \sum_{i=1}^{j-1} \left\{ \left( v_i - v_j - \frac{p_{ij}}{T} \right)^2 + (v_j - \lambda_0)^2 + \left( \frac{p_{ij}}{T} - \frac{p_{ij}}{T_0} \right)^2 \right\} + (v_j - \lambda_0)^2 \\
&\quad + 3 \sum_{k=j+1}^{n} \left\{ \left( v_k - v_j + \frac{p_{jk}}{T} \right)^2 + (v_j - \lambda_0)^2 + \left( \frac{p_{jk}}{T_0} - \frac{p_{jk}}{T} \right)^2 \right\} \\
&= 3\left\| v^{(j)} - \frac{p^{(j)}}{T} \right\|_2^2 + (3n - 2)(v_j - \lambda_0)^2 + \left( \frac{1}{T} - \frac{1}{T_0} \right)^2 \left\| p^{(j)} \right\|_2^2 \leq \frac{14\mu^2 n}{H^2},
\end{align*}
where the first inequality follows from $(x+y+z)^2 \le 3(x^2 + y^2 + z^2)$, while the second inequality follows from $\left\| v^{(j)} - \frac{p^{(j)}}{T} \right\|_2 \leq 2\mu\sqrt{n}/H$, $|v_j - \lambda_0| \leq \mu/(2H)$, $\left| \frac{1}{T} - \frac{1}{T_0} \right| \leq \mu/(7H^2)$ and $\left\| p^{(j)} \right\| \leq 7H$.

The above study tells us that $v$ is within the Euclidean distance $4\mu \sqrt{n}/H$ of the set $\mathcal{F}_j$ defined by
\begin{align*}
    \mathcal{F}_j := \left\{ \left( \frac{q_1}{T_0} + \lambda_0, \dots, \frac{q_n}{T_0} + \lambda_0 \right) : \lambda_0 \in \Lambda, T_0 \in \Theta, q \in \mathbb{Z}^{n}, q_j=0, \|q\|_2\le7H \right\}.
\end{align*}

There are at most $1 + 4H/\mu$ choices for $\lambda_0 \in \Lambda$, at most $1 + 7H/\mu$ ways to choose $T_0 \in \Theta$. Furthermore, the number of lattice points $q$ in the $n-1$-dimensional ball $B(0, 7H)$ is bounded by $(1 + cH/\sqrt{n})^n$ .

Define $\mathcal F$ to be the set of all vectors $w=w(v)$ produced by the above rounding procedure from some $v\in S_H$. Then
\begin{align*}
|\mathcal{F}|\le|\mathcal{F}_1\cup\cdots\cup\mathcal{F}_n| \leq n\cdot(1 + 4H/\mu) \cdot (1 + 7H/\mu) \cdot (1 + cH/\sqrt{n})^n \leq \mu^{-2} H^2 (C_8 H/\sqrt{n})^n,
\end{align*}
where the last inequality follows from the lower bound \(H\ge H_0=\frac17\sqrt{\delta n}\). This completes our proof.
\end{proof}

To conveniently study the invertibility of vectors in $\mathcal{F}$, we use the stability of CLCD (Lemma \ref{lemma:clcd_stability}) to study the CLCD of vectors in $\mathcal{F}$.
\begin{lemma}[CLCD on the net]
Let $\mathcal{F}$ be the net defined in Lemma \ref{lemma:discretization_level_sets} and assume that the parameters $\delta, \rho, \mu$ and $\gamma$ satisfy \eqref{eq:constants}.  For every $w \in \mathcal{F}$, we have $\operatorname{CLCD}_{\mu n/2,\gamma/2}(w) \geq H/16$ and $\|D(w) \|_2\gtrsim_{\delta,\rho} \sqrt{n}$.
\end{lemma}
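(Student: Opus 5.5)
Each $w\in\mathcal{F}$ comes from a rounding $w=w(v)$ of some $v\in S_H$ with $\|v-w\|_2\le 4\mu\sqrt n/H$. The plan is to transfer both properties from $v$ to $w$: the CLCD bound through Lemma \ref{lemma:clcd_stability}, and the bound on $\|D(w)\|_2$ through the triangle inequality.

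\textbf{Lower bound on $\|D(w)\|_2$.} Since $v\in\mathcal{N}(\delta,\rho)$, Lemma \ref{lemma:D(v) lower bound} and the computation after it give $\|D(v)\|_2\ge \frac{\delta\rho}{16}\sqrt n$. The map $D$ is linear and satisfies $\|D(x)\|_2^2=n\|x\|_2^2-(\sum_i x_i)^2\le n\|x\|_2^2$, so $\|D\|\le\sqrt n$. Hence
\begin{align*}
\|D(w)\|_2\ge \|D(v)\|_2-\sqrt n\,\|v-w\|_2\ge \frac{\delta\rho}{16}\sqrt n-\frac{4\mu n}{H}.
\end{align*}
Using $H\ge H_0/2=\frac1{14}\sqrt{\delta n}$, the error term is at most $56\mu\sqrt{n/\delta}$. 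Since $\mu\ll_{\delta,\rho}1$, this is at most $\frac{\delta\rho}{32}\sqrt n$, which gives $\|D(w)\|_2\ge\frac{\delta\rho}{32}\sqrt n\gtrsim_{\delta,\rho}\sqrt n$.

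\textbf{Lower bound on the CLCD.} We apply Lemma \ref{lemma:clcd_stability} with $\alpha=\mu n$. First check its hypothesis $\|v-w\|_2<\gamma\|D(v)\|_2/(5\sqrt n)$. The right-hand side is at least $\gamma\delta\rho/80$, while the left-hand side is at most $4\mu\sqrt n/H\le 56\mu/\sqrt\delta$. The hierarchy \eqref{eq:constants} allows $\mu\ll_{\delta,\rho}\gamma$, so the hypothesis holds. The lemma then gives
\begin{align*}
\operatorname{CLCD}_{\mu n/2,\gamma/2}(w)\ge\min\Big\{\operatorname{CLCD}_{\mu n,\gamma}(v),\ \frac{\mu n}{4\sqrt n\,\|v-w\|_2}\Big\}\ge \min\Big\{H,\ \frac{\mu n H}{16\mu n}\Big\}=\frac{H}{16}.
\end{align*}
Here we used $\operatorname{CLCD}_{\mu n,\gamma}(v)\ge H$, which holds because $v\in S_H$.

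The main point needing care is the stability hypothesis, because it ties $\mu$ to $\gamma$ through the $H$-independent bound $\|v-w\|_2\lesssim\mu/\sqrt\delta$. The argument only requires $\mu\le c\,\gamma\delta^{3/2}\rho$ for a small absolute constant $c$, which \eqref{eq:constants} permits. The requirement $\gamma<\delta\rho/12$ from Lemma \ref{lemma:large CLCD} is also compatible with \eqref{eq:constants}.
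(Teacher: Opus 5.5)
Your proof is correct and follows the paper's argument: you transfer the CLCD bound from $v\in S_H$ to $w$ via Lemma \ref{lemma:clcd_stability} with $\alpha=\mu n$, after checking its hypothesis using $\mu\ll_{\delta,\rho}\gamma$, and you get the bound on $\|D(w)\|_2$ from $\|D\|\le\sqrt n$ and the triangle inequality. The only difference is that you make the constants explicit (for example $\|v-w\|_2\le 56\mu/\sqrt{\delta}$ and $\mu\le c\,\gamma\delta^{3/2}\rho$), where the paper writes $\|v-w\|_2\lesssim_\delta\mu$.
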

\begin{proof}
Since net $\mathcal{F}\subset S_H + \frac{4\mu\sqrt n}{H}B_2^n$, for any vector $w$ in the net $\mathcal{F}$, there exists a vector $v \in S_H$ such that $\|v - w\|_2 \le 4\mu\sqrt{n}/H$. Since $\mu \ll_{\delta} 1$, we observe that $\|v - w\|_2 \lesssim_\delta \mu$.

Recall that for $v \in S_H \subseteq \mathcal{N}(\delta, \rho)$, Lemma \ref{lemma:D(v) lower bound} implies the lower bound $\|D(v)\|_2 \gtrsim_{\delta, \rho} \sqrt{n}$. Given the condition $\mu \ll_{\delta, \rho} \gamma$, thus $\|v - w\|_2\lesssim_\delta \mu < \frac{\gamma \|D(v)\|_2}{5\sqrt{n}}$ .
Therefore by Lemma \ref{lemma:clcd_stability}, we obtain
\begin{align*}
    \operatorname{CLCD}_{\mu n/2, \gamma/2}(w) \ge \min \left\{ \operatorname{CLCD}_{\mu n, \gamma}(v), \frac{\mu \sqrt{n}}{4 \|v - w\|_2} \right\} \ge H/16 ,
\end{align*}
where the last inequality follows from the facts that $\operatorname{CLCD}_{\mu n, \gamma}(v) \ge H$ and $\|v - w\|_2 \le 4\mu\sqrt{n}/H$. 

Furthermore, since for any $x\in\mathbb{R}^n$ we have $\|D(x)\|_2\le\sqrt{n}\|x\|_2$, by the triangle inequality, we obtain
\begin{align*}
    \|D(w)\|_2 \ge \|D(v)\|_2 - \|D(v - w)\|_2 \ge \|D(v)\|_2 - \sqrt{n} \|v - w\|_2
    &\ge \|D(v)\|_2 / 2 \gtrsim_{\delta, \rho} \sqrt{n} .
\end{align*}
In the last inequality, we use the bound $\|v - w\|_2 \le \frac{\gamma \|D(v)\|_2}{5\sqrt{n}} \le \frac{\|D(v)\|_2}{5\sqrt{n}}$. This completes the proof.
\end{proof}

\begin{lemma}\label{lemma:Invertibility_on_a_level_set}
There exist constants $\mu, \gamma, c_9 \in (0, 1)$ and $C_9 > 0$ such that the following holds. Suppose that $n \ge C_9$ and $H_0 \le H \le e^{c_9 n}$. Then
\begin{align*}
\textbf{P} \left( \inf_{v \in S_H} \|M_n' v\|_2 \le \frac{c_9 n}{H} \right) \le 2e^{-n}.
\end{align*}
\end{lemma}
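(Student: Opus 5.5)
This is the standard net plus union bound argument, run on the level set $S_H$. Fix $H$ with $H_0\le H\le e^{c_9 n}$ and let $\mathcal F$ be the net of Lemma \ref{lemma:discretization_level_sets}. Every $v\in S_H$ has some $w\in\mathcal F$ with $\|v-w\|_2\le 4\mu\sqrt n/H$.

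First, work on the event $\{\|M_n'\|\le K\sqrt n\}$, which fails with probability at most $e^{-n}$ by Proposition \ref{prop:first_singular_value}. On this event, if $\|M_n'v\|_2\le c_9 n/H$ for some $v\in S_H$, then
\begin{align*}
\|M_n'w\|_2\le \frac{c_9 n}{H}+K\sqrt n\cdot\frac{4\mu\sqrt n}{H}\le \frac{(c_9+4K\mu)n}{H}=:\varepsilon\sqrt n ,
\end{align*}
where $\varepsilon=(c_9+4K\mu)\sqrt n/H$.

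Next, bound the probability for a single $w\in\mathcal F$. By the lemma on CLCD on the net, $\operatorname{CLCD}_{\mu n/2,\gamma/2}(w)\ge H/16$ and $\|D(w)\|_2\ge b\sqrt n$ with $b=b(\delta,\rho)$. Lemma \ref{lemma:Invertibility_on_a_single_vector_via_small_ball_probability} is stated for the square $M_n$ and parameters $(\mu n,\gamma)$. I would apply it with $(\mu/2,\gamma/2)$ in place of $(\mu,\gamma)$, and to the $(n-1)$ rows of $M_n'$. The tensorization in Lemma \ref{lemma:tensorization} with $m=n-1$ gives the same bound, with exponent $n-1$ and $\sqrt{n}$ versus $\sqrt{n-1}$ only changing constants.

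The threshold condition is $\varepsilon\ge 16/H+e^{-c_7n}$. Since $\sqrt n/H$ may be as small as $e^{-c_9 n}\sqrt n$, replace $\varepsilon$ by
\begin{align*}
\varepsilon':=\max\Big(\varepsilon,\ \frac{16}{H}+e^{-c_7 n}\Big).
\end{align*}
Using $H\le e^{c_9 n}$ with $c_9<c_7$, and $\sqrt n\ge 16$, this gives $\varepsilon'\lesssim (c_9+4K\mu+1)\sqrt n/H$. Hence
\begin{align*}
\textbf P\big(\|M_n'w\|_2\le \varepsilon'\sqrt n\big)\le (C\sqrt n/H)^{n-1},
\end{align*}
where $C$ depends on $\delta,\rho,\mu,\gamma$ only through $C_7$.

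Then take the union bound. The total is at most
\begin{align*}
|\mathcal F|\,(C\sqrt n/H)^{n-1}\le \mu^{-2}H^2\Big(\frac{C_8H}{\sqrt n}\Big)^n\Big(\frac{C\sqrt n}{H}\Big)^{n-1}=\mu^{-2}H^2\cdot\frac{C_8H}{\sqrt n}\cdot (C_8C)^{n-1}.
\end{align*}
This is the main obstacle: the powers of $H/\sqrt n$ cancel, but what remains is $(C_8C)^{n-1}$, which is exponentially large rather than small. To get decay, I must exploit the smallness of $c_9$ and $\mu$. The constant $C$ is really $C_7(c_9+4K\mu)$ once the main term dominates, i.e.\ once $\varepsilon\ge 16/H+e^{-c_7n}$. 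So I would not take the maximum; instead I would argue that $(c_9+4K\mu)\sqrt n/H\ge 16/H+e^{-c_7 n}$. This holds for large $n$ because $\sqrt n(c_9+4K\mu)\ge 32$ and $H\le e^{c_9 n}<e^{c_7 n}/\sqrt n$. The single-vector bound then becomes $(C_7(c_9+4K\mu)\sqrt n/H)^{n-1}$, and the union bound is at most
\begin{align*}
\mu^{-2}H^3\,\big(C_8C_7(c_9+4K\mu)\big)^{n-1}.
\end{align*}
The difficulty is that $C_7$ itself depends on $\mu,\gamma$ through Theorem \ref{LOT}. I would check that this dependence enters only through $b$ and $\gamma$, and that $\mu$ appears only in the $e^{-c_7 n}$ threshold. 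If so, I choose $\gamma$ first, then $\mu$ and $c_9$ small enough that $C_8C_7(c_9+4K\mu)\le e^{-4}$. Since $H^3\le e^{3c_9n}$ and $\mu^{-2}$ is a constant, the bound is at most $e^{-n}$ for $n\ge C_9$. Adding the $e^{-n}$ from the operator norm event gives $2e^{-n}$.
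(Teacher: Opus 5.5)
Your proposal is correct and follows the paper's proof: it uses the net of Lemma \ref{lemma:discretization_level_sets}, the CLCD-on-the-net bounds, the tensorized single-vector estimate for the $n-1$ rows of $M_n'$, a union bound in which the powers of $H/\sqrt n$ cancel, and the event $\|M_n'\|\le K\sqrt n$ to pass from $S_H$ to the net. Your constant bookkeeping is actually more careful than the paper's: you target $(c_9+4K\mu)n/H$ on the net and fix $\gamma$, then $\mu$, then $c_9<c_7$. The check you propose goes through, since Theorem \ref{LOT} makes $C_7$ depend only on $b,\gamma$, with $\mu$ entering only through the $e^{-\mu^2 n/2}$ term, so $c_7\lesssim\mu^2$. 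This avoids the paper's simultaneous requirements $8K\mu\le c$ and $c\le c_7$, which cannot both hold for $\mu<1\le K$.
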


\begin{proof}
By Proposition \ref{prop:first_singular_value}, there exists a constant $K \ge 1$ such that the event $\{ \|M_n'\|  > K\sqrt{n} \}$ occurs with probability at most $e^{-n}$. Consequently, to conclude the proof, it suffices to establish the existence of constants $C, c > 0$ such that for all $n \ge C$ and $H_0 \le H \le e^{c n}$, the event
\begin{align*}
    \mathcal{E} := \left\{ \inf_{v \in S_H} \|M_n' v\|_2 \le \frac{cn}{2H} \quad \text{and} \quad  \|M_n'\|  \le K\sqrt{n} \right\}
\end{align*}
has probability bounded by $e^{-n}$.

We set the parameter choices as follows:
\begin{align*}
    c = \min \left\{ \frac{1}{e^5C_7 C_8}, c_7, 1 \right\}, \quad C = \max \{ (32/c)^2, (C_7 c)^{-2} \},
\end{align*}
where $C_7, c_7$ are the constants associated with the small ball probability estimates (Lemma \ref{lemma:Invertibility_on_a_single_vector_via_small_ball_probability}), and $C_8$ is the constant from the net construction (Lemma \ref{lemma:discretization_level_sets}). Furthermore, we set the parameters $0 < \delta, \gamma \ll 1$ and $0 < \mu \ll_{\delta, \rho, K} \gamma$.

Let $\mathcal{F}$ be the net constructed in Lemma \ref{lemma:discretization_level_sets}. For any $w \in \mathcal{F}$, as established in the proof of Lemma \ref{lemma:discretization_level_sets}, we have $\operatorname{CLCD}_{\mu n/2, \gamma/2}(w) \ge H/16$ and $\|D(w)\|_2 \gtrsim_{\delta, \rho} \sqrt{n}$. 

Define the threshold $\varepsilon := c\sqrt{n}/H$. Under the assumption $n \ge (32/c)^2$ and $c\le c_7$, this threshold satisfies $\varepsilon \ge \frac{1}{\operatorname{CLCD}_{\mu n/2, \gamma/2}(w)} + e^{-c_7 n}.$ Thus, similarly to Lemma \ref{lemma:Invertibility_on_a_single_vector_via_small_ball_probability}, we obtain
\begin{align*}
    \textbf{P} \left(\|M_n' w\|_2 \le \frac{cn}{H} \right) \le \left( \frac{C_7 c \sqrt{n}}{H} \right)^{n-1}.
\end{align*}
We now apply a union bound over the net $\mathcal{F}$. Recalling the cardinality bound $|\mathcal{F}| \le \mu^{-2} H^2 (C_8 H / \sqrt{n})^n$ and $n \ge (C_7 c)^{-2}$ and $H \le e^{cn}\le e^n$, we have
\begin{align*}
\textbf{P} \left( \inf_{w \in \mathcal{F}} \|M_n' w\|_2 \le \frac{cn}{H} \right)
&\le \mu^{-2} H^2 \left( \frac{C_8 H}{\sqrt{n}} \right)^n \left( \frac{C_7 c \sqrt{n}}{H} \right)^{n-1}  \\
&\le \mu^{-2} H^3 \cdot (C_8 C_7 c)^n  \\
&\le \mu^{-2}e^{3n} \cdot e^{-5n} \le e^{-n}.
\end{align*}
Finally, assume the event $\mathcal{E}$ occurs. Then there exists $v \in S_H$ with $\|M_n' v\|_2 \le cn/(2H)$. By the approximation property of the net, there exists $w \in \mathcal{F}$ such that $\|v - w\|_2 \le 4\mu\sqrt{n}/H$. Then, we obtain
\begin{align*}
    \|M_n' w\|_2 \le \|M_n' v\|_2 + \|M_n'\|  \|v - w\|_2 \le \frac{cn}{2H} + K\sqrt{n} \cdot \frac{4\mu\sqrt{n}}{H} \le \frac{cn}{H},
\end{align*}
where $\mu$ is chosen sufficiently small ($8K\mu \le c$). This implies that there exists a constant $c_9\in(0,1)$ such that
\begin{align*}
\textbf{P} \left( \mathcal{E} \right) \le \textbf{P} \left( \inf_{w \in \mathcal{F}} \|M_n' w\|_2 \le \frac{c_9n}{H} \right)\le e^{-n}.
\end{align*}
This completes the proof.
\end{proof}

In the remainder of this section, we prove Proposition \ref{prop:random_normal} and Theorem \ref{theorem:distance}.
\begin{proof}[Proof of Proposition \ref{prop:random_normal}]
Let $\mu, \gamma, c_9 \in (0, 1)$ be the constants established in Lemma \ref{lemma:Invertibility_on_a_level_set}. We focus on the event
\begin{align*}
    \mathcal{E} := \left\{ \exists v \in \mathcal{N}(\delta, \rho) : M_n' v = 0  ,\ \operatorname{CLCD}_{\mu n, \gamma}(v) \le e^{c_9 n} \right\}.
\end{align*}
According to Lemma \ref{lemma:large CLCD}, for all $v\in\mathcal{N}(\delta, \rho)$, we have $\operatorname{CLCD}_{\mu n, \gamma}(v)\ge H_0 = \frac{1}{7}\sqrt{\delta n}$. Thus, we have
\begin{align*}
\textbf{P}(\mathcal{E}) \le \sum_{k} \textbf{P} \left\{ \exists v \in S_{2^k} : M_n' v = 0 \right\},
\end{align*}
where the sum runs over all integers $k$ satisfying $H_0/2 \le 2^k \le e^{c_9 n}$.

By Lemma \ref{lemma:Invertibility_on_a_level_set}, for any $H = 2^k$, we have
\begin{align*}
    \textbf{P} \left\{ \exists v \in S_{2^k} : M_n' v = 0 \right\} \le \textbf{P} \left( \inf_{v \in S_{2^k}} \|M_n' v\|_2 \le \frac{c_9 n}{2^k} \right) \le 2e^{-n},
\end{align*}
for sufficiently large $n$. Applying the union bound yields
\begin{align*}
    \textbf{P}(\mathcal{E}) \le (c_9 n \log_2 e) \cdot 2e^{-n}\le4c_9n\cdot e^{-n}\le2^{-n}.
\end{align*}
Let $c_6=c_9$, this completes the proof.
\end{proof}
\begin{proof}[Proof of Theorem \ref{theorem:distance}]
Choose parameters $\delta$ and $\rho$ such that $0 < \delta, \rho \ll 1$. It follows from Proposition \ref{prop:Random normal is non-almost-constant} that with probability at least $1 - e^{-c_5n}$ any unit vector orthogonal to $H_n$ is in $\mathcal{N}(\delta, \rho)$. On the other hand, Proposition \ref{prop:random_normal} implies that, with probability at least \(1-2^{-n}\), every unit vector \(v\in H_n^\perp\cap \mathcal N(\delta,\rho)\) satisfies
\begin{align*}
\operatorname{CLCD}_{\mu n,\gamma}(v)\ge e^{c_6n}.
\end{align*}
Combining these two events and applying the union bound, we conclude that, with probability at least \(1-e^{-c_5n}-2^{-n}\), any unit vector \(v\in H_n^\perp\) satisfies
\begin{align*}
v\in \mathcal N(\delta,\rho)
\quad\text{and}\quad
\operatorname{CLCD}_{\mu n,\gamma}(v)\ge e^{c_6n}.
\end{align*}
Applying Theorem \ref{LOT} to such a vector \(v\) completes the proof.

\end{proof}

\section{Proof of Theorem \ref{main}}\label{sec5}
Consider the event
\begin{align*}
\mathcal{E} := \left\{ \exists v \in \mathbb{S}^{n-1} \text{ such that } \|v^{\top}M_n \|_2 \leq \varepsilon \frac{\rho}{\sqrt{n}} \right\}.
\end{align*}
\noindent Recalling the decomposition of the sphere $\mathbb{S}^{n-1} = \operatorname{Comp}(\delta, \rho) \cup \operatorname{Incomp}(\delta, \rho)$, we break the invertibility problem into two subproblems. For compressible vectors, we have the following proposition
\begin{proposition}\label{prop:Invertibility for compressible vectors}
Let $M_n$ be a matrix as in Theorem \ref{main}. Then there exist constants $\delta, \rho, c_{10} \in(0,1)$ such that
\begin{align*}
\textbf{P}\left(\inf _{x \in \operatorname{Comp}(\delta, \rho)}\left\|x^{\top} M_n\right\|_2 \leq \sqrt{n} / 144\right) \leq 2 e^{-c_{10} n}.
\end{align*}
\end{proposition}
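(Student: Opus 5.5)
We follow the standard Rudelson--Vershynin net argument for compressible vectors, in the same way as in the proof of Proposition \ref{prop:Random normal is non-almost-constant}. The only probabilistic input is a single-vector bound and an operator norm bound. Note that $x^{\top}M_n = M_n^{\top}x$, and the columns of $M_n$ (the rows of $M_n^{\top}$) are not independent. The single-vector estimate must therefore be applied to $\|x^{\top}M_n\|_2^2=\sum_{j}\langle x, C_j\rangle^2$, where $C_j$ denotes the $j$th column. It is more convenient to write $x^{\top}M_n=\sum_i x_i R_i$, a sum of independent random vectors.

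\textbf{Step 1: single-vector bound.} For fixed $x\in\mathbb S^{n-1}$ we show that $\textbf P(\|x^{\top}M_n\|_2\le \sqrt n/72)\le 2e^{-cn}$. Write $Y:=\sum_i x_iR_i$. Its coordinates are $Y_k=\sum_i x_i M_n(i,k)$. Conditionally on the support matrix $Q_n$, the signs $\Xi$ are i.i.d. Rademacher, so $Y=\sum_{i,k} x_i Q(i,k)\Xi(i,k) e_k$, and
\begin{align*}
\textbf E[\|Y\|_2^2 \mid Q_n]=\sum_{i,k}x_i^2Q(i,k)=\sum_i x_i^2\cdot \tfrac n2=\tfrac n2 .
\end{align*}
Given $Q_n$, the quantity $\|Y\|_2^2$ is a Rademacher chaos $\|A\xi\|_2^2$ for a fixed matrix $A$ whose nonzero entries are the $x_i$, placed block-diagonally by column. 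Its Hilbert--Schmidt norm is $\sqrt{n/2}$ and its operator norm is at most $\max_k(\sum_i x_i^2 Q(i,k))^{1/2}\le 1$. The Hanson--Wright inequality (or Hoeffding applied column by column, since each $Y_k$ is a Rademacher sum with variance $a_k:=\sum_i x_i^2Q(i,k)\le1$ and the $Y_k$ are conditionally independent) then gives $\textbf P(\|Y\|_2^2\le n/4\mid Q_n)\le 2e^{-cn}$. Integrating over $Q_n$ yields the unconditional bound. Concretely, the $Y_k^2-a_k$ are conditionally independent, mean zero, and have bounded $\psi_1$ norm, so Theorem \ref{Bernstein} applies directly, with $\sum_k a_k=n/2$.

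\textbf{Step 2: net over compressible vectors.} By Proposition \ref{prop:first_singular_value}, $\|M_n\|\le K\sqrt n$ with probability at least $1-e^{-n}$. First take $x\in\operatorname{Sparse}(\delta)\cap\mathbb S^{n-1}$. Choose a $\tau$-net of the unit spheres on all supports of size $\delta n$, with $\tau=1/(144K)$. Its cardinality is at most $\binom{n}{\delta n}(3/\tau)^{\delta n}\le e^{C\delta\log(1/(\delta\tau))n}$. A union bound with Step 1 shows that, with probability at least $1-e^{-cn/2}$ (provided $\delta$ is small depending on $K$), every net point $y$ satisfies $\|y^{\top}M_n\|_2>\sqrt n/72$. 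Approximation then gives $\|x^{\top}M_n\|_2\ge \sqrt n/72-\tau K\sqrt n\ge \sqrt n/144+\rho K\sqrt n$ for all unit sparse $x$, once $\rho\le 1/(288K)$.

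For $x\in\operatorname{Comp}(\delta,\rho)$, pick a sparse $z$ with $\|x-z\|_2\le\rho$. Then $\|z\|_2\in[1-\rho,1+\rho]$, and using $z/\|z\|_2$ together with the triangle inequality (losing a factor $1-\rho$ and an additive $2\rho K\sqrt n$), we obtain $\|x^{\top}M_n\|_2>\sqrt n/144$ after shrinking $\rho$ once more.

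\textbf{Main obstacle.} The delicate point is Step 1, since the columns of $M_n$ are dependent and Lemma \ref{FFV} is stated for $Mv$, not $x^{\top}M$. Conditioning on $Q_n$ resolves this, because the signs then make the coordinates of $x^{\top}M_n$ conditionally independent. After that, the only remaining care is the order in which the constants are chosen: $K$ first, then $\delta$ small relative to $c$, then $\tau$ and $\rho$.
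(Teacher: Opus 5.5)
Your argument is correct, but it takes a different route from the paper in both ingredients.

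\textbf{Single-vector bound.} The paper (Lemma \ref{LV}) keeps only the first $n/4$ columns and exposes them one at a time. Given the previous columns, the entries of the next column are independent across rows, with conditional nonzero probability in $[1/3,2/3]$. It then combines Paley--Zygmund with an exponential Markov step to get $e^{-n/400}$ at level $\sqrt n/36$. You instead condition on the whole support matrix $Q_n$. This makes all $n$ coordinates of $x^{\top}M_n$ conditionally independent Rademacher sums, whose variances $a_k\le 1$ satisfy the exact identity $\sum_k a_k=n/2$. Theorem \ref{Bernstein} then gives concentration of $\|x^{\top}M_n\|_2^2$ around $n/2$. Your version is shorter and sharper, since it controls the lower tail at level $\sqrt n/2$. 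It is, however, tailored to the signed model, because it relies on $\Xi$ being independent of $Q_n$. The paper's column-exposure argument only uses conditional second and fourth moments of one column.

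\textbf{Net.} The paper uses Livshyts' random-rounding net (Lemma \ref{SNDM}) together with the deterministic bound $\|M_n\|_{\mathrm{HS}}\le n$. This avoids any operator-norm event, at the price of a heavy external theorem and a very small $\alpha$ forced by the $\alpha^{0.08}\log(1/\alpha)$ entropy loss. You use the classical net on sparse unit vectors plus Proposition \ref{prop:first_singular_value}. That is more elementary and self-contained, and it costs only an extra $e^{-n}$ in probability. Your order of constants ($K$, then $c$ and $\tau$, then $\delta$, then $\rho$) has no circularity, because the sparse net does not depend on $\rho$.

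\textbf{Arithmetic fix.} With $\tau=1/(144K)$ you get exactly $\sqrt n/72-\tau K\sqrt n=\sqrt n/144$. This leaves no room for the additional $\rho K\sqrt n$ you claim, so the chain as written fails. Take $\tau=1/(288K)$, or simply use the threshold $\sqrt n/2$ that your Step 1 actually proves; then there is ample slack to absorb the $(1-\rho)$ factor and the $2\rho K\sqrt n$ loss in the compressible step.
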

Since we consider a vector being left-multiplied by a matrix, we need the invertibility of a single left-multiplied vector.
\begin{lemma}\label{LV}
Let $M_n$ be as in Theorem \ref{main}. Then, for any vector $x$ in $\mathbb{S}^{n-1}$, we have
\begin{align*}
\textbf{P}\left(\left\|x^{\top} M_n\right\|_2 \leq \sqrt{n} / 36\right) \leq e^{-{n}/{400}}.
\end{align*}
\end{lemma}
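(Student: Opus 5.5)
The plan is to condition on the support pattern, exploiting the representation $M_n\stackrel{d}{=}Q_n\circ\Xi$ from Section~\ref{sec1}. The coordinates of $x^{\top}M_n$ are column sums $Y_k:=\sum_{i=1}^n x_i M_n(i,k)$. These are not independent, because of the row constraints. However, \emph{conditional on $Q_n$} they are independent: $Y_k=\sum_i x_i Q_n(i,k)\,\Xi(i,k)$ depends only on the $k$-th column of the i.i.d.\ sign matrix $\Xi$.

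Under this conditioning each $Y_k$ is a Rademacher sum with conditional variance
\[
s_k:=\mathbf E[Y_k^2\mid Q_n]=\sum_{i=1}^n x_i^2 Q_n(i,k)\in[0,1].
\]
The key observation is that the row constraint makes the total variance deterministic:
\[
\sum_{k=1}^n s_k=\sum_{i=1}^n x_i^2\sum_k Q_n(i,k)=\frac n2 .
\]
So it suffices to prove, for every fixed realization of $Q_n$, a lower-tail bound for $\|x^\top M_n\|_2^2=\sum_k Y_k^2$ around its conditional mean $n/2$, uniformly in $Q_n$. Averaging over $Q_n$ then gives the lemma.

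For the lower tail I will use the exponential Markov inequality for sums of independent nonnegative variables. For $\lambda>0$,
\[
\mathbf P\Bigl(\sum_k Y_k^2\le t\,\Big|\,Q_n\Bigr)\le e^{\lambda t}\prod_k \mathbf E\bigl[e^{-\lambda Y_k^2}\mid Q_n\bigr].
\]
Using $e^{-u}\le 1-u+u^2/2$ for $u\ge0$ and Khintchine's fourth-moment bound for Rademacher sums, $\mathbf E[Y_k^4\mid Q_n]\le 3s_k^2$, I get
\[
\mathbf E[e^{-\lambda Y_k^2}\mid Q_n]\le \exp\bigl(-\lambda s_k+\tfrac32\lambda^2 s_k^2\bigr)\le \exp\bigl(-\lambda s_k+\tfrac32\lambda^2 s_k\bigr),
\]
where the last step uses $s_k\le 1$. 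Multiplying over $k$ and using $\sum_k s_k=n/2$ gives the bound $\exp\bigl(\lambda t-\tfrac{n}{2}\lambda+\tfrac34\lambda^2 n\bigr)$.

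Now take $t=n/36^2=n/1296$, which corresponds to $\|x^\top M_n\|_2\le\sqrt n/36$, and choose $\lambda=1/4$. The exponent becomes
\[
n\Bigl(\tfrac1{5184}-\tfrac18+\tfrac3{64}\Bigr)\le -\tfrac{n}{13}\le -\tfrac{n}{400}.
\]
This bound is uniform in $Q_n$, so taking expectation over $Q_n$ finishes the proof.

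The main obstacle is the dependence among the columns. Lemma~\ref{le23} cannot be applied directly, since it concerns $Mv$ with independent rows. The conditioning on $Q_n$ resolves this: it restores independence across columns, and the exact row-sum constraint fixes the conditional mean at $n/2$. After that, only the routine moment-generating-function computation remains. The constant $1/400$ has ample slack, so I expect no difficulty in the choice of $\lambda$.
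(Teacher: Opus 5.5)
Your proof is correct, and it takes a different route from the paper.

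\textbf{The paper's route.} The paper keeps only the first $N=n/4$ columns and conditions \emph{sequentially} on the earlier columns $\Gamma_1,\dots,\Gamma_{j-1}$. By row independence, the entries of the $j$-th column are then conditionally independent and centered. Their activation probabilities satisfy $p_i\in[1/3,2/3]$, and this is exactly why $j\le n/4$ is imposed: it gives the per-column lower bound $\mathbf E[y_j^2\mid\cdot]\ge 1/3$. Paley--Zygmund, together with $\mathbf E[y_j^4\mid\cdot]\le 3(\mathbf E[y_j^2\mid\cdot])^2$, then gives $\mathbf P(|y_j|>1/3\mid\cdot)\ge 1/12$ column by column. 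Finally, the exponential Markov inequality tensorizes this over the $N$ columns.

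\textbf{Your route.} You condition on the whole support $Q_n$ at once. Since $\Xi$ is i.i.d.\ and independent of $Q_n$, this makes all $n$ column sums independent Rademacher sums. You never need a per-column lower bound on the conditional variance, because the row constraint fixes $\sum_k s_k=n/2$ exactly. The bound $s_k\le 1$ then lets you absorb the quadratic term $\tfrac32\lambda^2 s_k^2$ linearly.

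\textbf{Comparison.} Your argument is shorter, uses all $n$ columns, and gives a much better exponent, roughly $e^{-n/13}$ instead of $e^{-n/400}$. However, it leans entirely on the product structure $Q_n\circ\Xi$, which decouples every column simultaneously. The paper's argument only needs, column by column, conditional independence across rows given the past plus two-sided control of the conditional second and fourth moments. That is the more standard template for row-constrained models, where such a global decoupling is not available.
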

We first recall the Paley–Zygmund inequality \cite[Lemma 3.5]{MR2146352}.
\begin{lemma}\label{PZI}
Let $X$ be a nonnegative random variable with $\textbf{E}[X^4] < \infty$. Then, for $0 \leq \lambda < \sqrt{\textbf{E}[X^2]}$ we have
\begin{align*}
\textbf{P}\left(X > \lambda\right) \geq \frac{(\textbf{E}[X^2] - \lambda^2)^2}{\textbf{E}[X^4]}.
\end{align*}
\end{lemma}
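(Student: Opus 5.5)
The plan is the standard second-moment argument: split $\textbf{E}[X^2]$ according to whether $X$ exceeds $\lambda$, and control the large part by Cauchy--Schwarz. Nonnegativity of $X$ is not needed for this; the hypothesis $\lambda<\sqrt{\textbf{E}[X^2]}$ is what makes the final rearrangement legitimate.

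First, since $\textbf{E}[X^4]<\infty$, also $\textbf{E}[X^2]\le \sqrt{\textbf{E}[X^4]}<\infty$, so all quantities are finite. Write
\begin{align*}
\textbf{E}[X^2]=\textbf{E}\big[X^2\mathbf 1_{\{X\le\lambda\}}\big]+\textbf{E}\big[X^2\mathbf 1_{\{X>\lambda\}}\big].
\end{align*}
On the event $\{X\le \lambda\}$ we have $X^2\le\lambda^2$, because $\lambda\ge0$ gives $-\lambda\le X\le \lambda$ whenever $X\ge-\lambda$, and for nonnegative $X$ this certainly holds. Hence the first term is at most $\lambda^2$.

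For the second term I will apply the Cauchy--Schwarz inequality to the product $X^2\cdot\mathbf 1_{\{X>\lambda\}}$:
\begin{align*}
\textbf{E}\big[X^2\mathbf 1_{\{X>\lambda\}}\big]\le \sqrt{\textbf{E}[X^4]}\,\sqrt{\textbf{P}(X>\lambda)}.
\end{align*}
Combining the two bounds gives
\begin{align*}
\textbf{E}[X^2]-\lambda^2\le \sqrt{\textbf{E}[X^4]\,\textbf{P}(X>\lambda)}.
\end{align*}
By assumption $\lambda^2<\textbf{E}[X^2]$, so the left side is strictly positive and both sides may be squared. Moreover $\textbf{E}[X^4]\ge(\textbf{E}[X^2])^2>0$, so dividing by $\textbf{E}[X^4]$ is legitimate. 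This yields $\textbf{P}(X>\lambda)\ge(\textbf{E}[X^2]-\lambda^2)^2/\textbf{E}[X^4]$.

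There is no real obstacle here. The only points needing care are the two sign and positivity conditions: positivity of $\textbf{E}[X^2]-\lambda^2$ before squaring, and $\textbf{E}[X^4]\neq0$ before dividing. Both follow directly from the hypothesis $0\le\lambda<\sqrt{\textbf{E}[X^2]}$.
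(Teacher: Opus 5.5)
Your proof is correct and is the standard argument. The paper gives no proof of its own: it cites \cite[Lemma 3.5]{MR2146352}, where the inequality is obtained by the same splitting of $\textbf{E}[X^2]$ at $\lambda$ followed by H\"older's inequality, which reduces to Cauchy--Schwarz in this fourth-moment case.

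One correction to your opening aside: nonnegativity is needed, and you in fact use it to get $X^2\le\lambda^2$ on $\{X\le\lambda\}$. For signed $X$ the inequality fails (take $X\equiv-1$, $\lambda=0$), and the correct signed version concerns the event $\{|X|>\lambda\}$. That signed version is how the paper applies the lemma, namely to $|y_j|$.
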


\begin{proof}
Let $N:=n/4$, and let $\Gamma$ denote the $n\times N$ submatrix of $M_n$ formed by the first $N$ columns.
For $j=1,\dots,N$, let $\Gamma_j$ be the $j$-th column of $\Gamma$ and $\Gamma_{ij}$ be the entries of $\Gamma$, then set $y_j := (x^\top \Gamma)_j = x^\top \Gamma_j = \sum_{i=1}^n x_i \Gamma_{ij}.$
Then
\begin{align*}
\|x^\top M_n\|_2\ge \|x^\top \Gamma\|_2
= \left(\sum_{j=1}^N y_j^2\right)^{1/2}.
\end{align*}
Therefore, for any $t>0$,
\begin{align*}
\textbf{P} \left(\|x^\top M_n\|_2 \le t\sqrt{N}\right)\le\textbf{P}\left(\sum_{j=1}^N y_j^2 \le t^2 N\right).
\end{align*}
For every $\tau>0$, by the exponential Markov inequality,
\begin{align*}
\textbf{P}\left(\sum_{j=1}^N y_j^2 \le t^2 N\right)
&= \textbf{P}\left(-\frac{\tau}{t^2}\sum_{j=1}^N y_j^2
\ge -\tau N\right) \le e^{\tau N}\,
\textbf{E}\exp\left(-\frac{\tau}{t^2}\sum_{j=1}^N y_j^2\right).
\end{align*}
Iterating conditional expectations yields
\begin{align*}
\textbf{E}\exp\left(-\frac{\tau}{t^2}\sum_{j=1}^N y_j^2\right) \le \prod_{j=1}^N \sup_{\Gamma_1,\dots,\Gamma_{j-1}}
\textbf{E}\left(e^{-\tau y_j^2/t^2}\mid\,\Gamma_1,\dots,\Gamma_{j-1}\right),
\end{align*}
where the supremum is taken over all realizations of the first $j-1$ columns.
Hence
\begin{align}\label{eq2440}
\textbf{P}\left(\sum_{j=1}^N y_j^2 \le t^2 N\right)\le e^{\tau N}\prod_{j=1}^N \sup_{\Gamma_1,\dots,\Gamma_{j-1}}\textbf{E}\left(e^{-\tau y_j^2/t^2}\mid\,\Gamma_1,\dots,\Gamma_{j-1}\right).
\end{align}

Now, we study $y_j$ conditional on $\Gamma_1,\dots,\Gamma_{j-1}$. By construction of the model, the $i$-th row must have exactly $n/2$ non-zero entries, chosen uniformly and without replacement.
Given $\Gamma_1,\dots,\Gamma_{j-1}$, define $p_i:=\textbf{P}\left(\Gamma_{ij}\ne 0\mid\Gamma_1,\dots,\Gamma_{j-1}\right).$ Since $j\le N=n/4$, we have $\frac13 \le p_i \le \frac23 $ for all $i\le n$.

Conditioned on the event $\Gamma_{ij}\ne 0$, $\Gamma_{ij}$ is a Rademacher random variable. Therefore,
\begin{align*}
\textbf{P}(\Gamma_{ij}=1\mid\Gamma_1,\dots,\Gamma_{j-1}) = \textbf{P}(\Gamma_{ij}=-1\mid\Gamma_1,\dots,\Gamma_{j-1}) = \frac{p_i}{2},
\quad
\textbf{P}(\Gamma_{ij}=0\mid\Gamma_1,\dots,\Gamma_{j-1}) = 1-p_i.
\end{align*}

Define $Z_{ij} := x_i \Gamma_{ij}$ so that $y_j = \sum_{i=1}^n Z_{ij}$. Given $\Gamma_1,\dots,\Gamma_{j-1}$, the random variables $\{Z_{ij}\}_{i=1}^n$ are independent and centered. Since $\textbf{E}[Z_{ij}\mid\Gamma_1,\dots,\Gamma_{j-1}]=0$ and $Z_{ij}$ are independent,
\begin{align*}
\textbf{E}[y_j^2\mid\Gamma_1,\dots,\Gamma_{j-1}]
=\sum_{i=1}^n \textbf{E}[Z_{ij}^2\mid\Gamma_1,\dots,\Gamma_{j-1}]
=\sum_{i=1}^n x_i^2\,\textbf{E}[\Gamma_{ij}^2\mid\Gamma_1,\dots,\Gamma_{j-1}]
=\sum_{i=1}^n x_i^2 p_i.
\end{align*}
Using $p_i\in[1/3,2/3]$ and $\sum_{i=1}^n x_i^2=1$ we obtain
\begin{align}\label{eq2241}
\frac13 \le \textbf{E}[y_j^2\mid\Gamma_1,\dots,\Gamma_{j-1}].
\end{align}
Similarly, since
$\textbf{E}[Z_{ij}^2\mid\Gamma_1,\dots,\Gamma_{j-1}]=x_i^2 p_i$ and
$\textbf{E}[Z_{ij}^4\mid\Gamma_1,\dots,\Gamma_{j-1}]=x_i^4 \textbf{E}[\Gamma_{ij}^4\mid\Gamma_1,\dots,\Gamma_{j-1}]
= x_i^4 p_i,$
\begin{align*}
\textbf{E}[y_j^4\mid\Gamma_1,\dots,\Gamma_{j-1}]
&= \sum_{i=1}^n \textbf{E}[Z_{ij}^4\mid\Gamma_1,\dots,\Gamma_{j-1}]
+ 6\sum_{1\le i<k\le n} \textbf{E}[Z_{ij}^2\mid\Gamma_1,\dots,\Gamma_{j-1}]
\textbf{E}[Z_{kj}^2\mid\Gamma_1,\dots,\Gamma_{j-1}]\\
&=\sum_{i=1}^n x_i^4 p_i
+ 6\sum_{1\le i<k\le n} x_i^2 x_k^2 p_i p_k\le3\left[\textbf{E}[y_j^2\mid\Gamma_1,\dots,\Gamma_{j-1}]\right]^2.
\end{align*}

We now set $\lambda=\sqrt{\textbf{E}[y_j^2\mid\Gamma_1,\dots,\Gamma_{j-1}]/2}$ and apply the conditional version of the Paley-Zygmund inequality (Lemma \ref{PZI}) conditionally on $\Gamma_1,\dots,\Gamma_{j-1}$. Since \eqref{eq2241}, we have $\lambda\ge1/3$. Then, we obtain
\begin{align*}
\textbf{P}\left(|y_j|>1/3 \mid\Gamma_1,\dots,\Gamma_{j-1}\right)\ge
\frac{\left(\textbf{E}[y_j^2\mid\Gamma_1,\dots,\Gamma_{j-1}]-\lambda^2\right)^2}
{\textbf{E}[y_j^4\mid\Gamma_1,\dots,\Gamma_{j-1}]}
= \frac{(\textbf{E}[y_j^2\mid\Gamma_1,\dots,\Gamma_{j-1}]/2)^2}{\textbf{E}[y_j^4\mid\Gamma_1,\dots,\Gamma_{j-1}]}\ge \frac{1}{12}.
\end{align*}
Consequently,
\begin{align*}
\textbf{E}\left(e^{-\tau y_j^2/t^2}\,\Big|\,\Gamma_1,\dots,\Gamma_{j-1}\right)
&= \textbf{E}\left(e^{-\tau y_j^2/t^2}\mathbf 1_{\{|y_j|\le 1/3\}}\,\Big|\,\Gamma_1,\dots,\Gamma_{j-1}\right)
+ \textbf{E}\left(e^{-\tau y_j^2/t^2}\mathbf 1_{\{|y_j|>1/3\}}\,\Big|\,\Gamma_1,\dots,\Gamma_{j-1}\right)\\
&\le \textbf{P}\left(|y_j|\le 1/3\mid\Gamma_1,\dots,\Gamma_{j-1}\right)
+ e^{-\tau/(9t^2)}\,\textbf{P}\left(|y_j|>1/3\mid\Gamma_1,\dots,\Gamma_{j-1}\right)\\
&\le \frac{11}{12} + \frac{1}{12}\,e^{-\tau/(9t^2)}.
\end{align*}
Plugging this into \eqref{eq2440}, we obtain
\begin{align*}
\textbf{P}\left(\sum_{j=1}^N y_j^2 \le t^2 N\right)\le\exp(\tau N)\left(\frac{11}{12} + \frac{1}{12}\,e^{-\tau/(9t^2)}\right)^N.
\end{align*}

We now choose $t=1/18$ and $\tau=1/100$. Since $N=n/4$, we obtain
\begin{align*}
\textbf{P}\left(\|x^\top M_n\|_2 \le \sqrt n/36\right)
\le \textbf{P}\left(\sum_{j=1}^N y_j^2 \le t^2 N\right)
\le e^{-{n}/{400}},
\end{align*}
for every $x\in \mathbb{S}^{n-1}$. This completes the proof.
\end{proof}

Next, it suffices to construct a suitable net to complete the proof of Proposition \ref{prop:Invertibility for compressible vectors}. Recall a special case of Theorem 4 from \cite{MR4361906}:
\begin{lemma}[{\cite[Theorem 4]{MR4361906}}]\label{SNDM}
Consider any set  $S \subseteq \mathbb{S}^{n-1}$ , and fix any $ \alpha \in (0, \frac{1}{2}) $, $ \beta \in (0, \frac{\alpha}{10}) $. Assume $ n \geq 1/\alpha^2 $. Then there exists a (deterministic) net $ \mathcal{N} \subseteq S + \frac{4\beta}{\alpha} B_2^n $ such that
\begin{align*}
|\mathcal{N}| \leq N(S, \beta B_2^n) \cdot e^{C_{11} \alpha^{0.08} \log(1/\alpha) n},
\end{align*}
where $ C_{11} > 0 $ is an absolute constant. Moreover, for any (deterministic) $ n \times n $ matrix $ A $, the following holds: for every $ x \in S $, there exists $ y \in \mathcal{N} $ such that
\begin{align*}
\|(x - y)^{\top} A\|_2 \leq \frac{2\beta}{\alpha \sqrt{n}} \|A\|_{\mathrm{HS}}.
\end{align*}
\end{lemma}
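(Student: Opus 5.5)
The plan is a randomized-rounding argument. First we approximate by a coarse net, and then we round to a fine lattice centred at the coarse net point. The rounding is chosen so that the error $(x-y)^{\top}A$ is controlled on average by the Hilbert--Schmidt norm, while the rounded vector differs from the coarse point in only a few coordinates.

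\textbf{Setup.} Let $\mathcal N_0\subseteq S$ be a $\beta$-net of $S$ with $|\mathcal N_0|\le N(S,\beta B_2^n)$ (up to replacing $\beta$ by $2\beta$ if one insists on centres in $S$). Set the lattice step $h:=2\beta/(\alpha\sqrt n)$. For each $y_0\in\mathcal N_0$, consider the shifted lattice $y_0+h\mathbb Z^n$. Define $\mathcal N$ to be the set of points $y\in y_0+h\mathbb Z^n$ such that:
\begin{itemize}
\item $|\operatorname{supp}(y-y_0)|\le \alpha n$;
\item $\|y-y_0\|_2\le 2\beta$;
\item $y\in S+\frac{4\beta}{\alpha}B_2^n$.
\end{itemize}
Here $y_0$ ranges over $\mathcal N_0$. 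The counting is routine. There are at most $\binom{n}{\alpha n}\le e^{\alpha\log(e/\alpha)n}$ choices of support. On a fixed support, the integer vector $(y-y_0)/h$ has Euclidean norm at most $2\beta/h=\alpha\sqrt n$ in dimension $\alpha n$, so there are at most $C^{\alpha n}$ such vectors. Hence $|\mathcal N|\le N(S,\beta B_2^n)\,e^{C\alpha\log(1/\alpha)n}$, which is stronger than the claimed $e^{C_{11}\alpha^{0.08}\log(1/\alpha)n}$ since $\alpha<1$.

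\textbf{Rounding.} Given $x\in S$, choose $y_0\in\mathcal N_0$ with $\|x-y_0\|_2\le\beta$. Write $d_i:=(x_i-y_{0,i})/h$ and round each coordinate independently: $y_i=y_{0,i}+h\lceil d_i\rceil$ with probability $d_i-\lfloor d_i\rfloor$, and $y_i=y_{0,i}+h\lfloor d_i\rfloor$ otherwise. Then $\mathbf E y=x$, the coordinates of $y-x$ are independent and centred, and $|y_i-x_i|\le h$. Three estimates follow.
\begin{itemize}
\item For any fixed matrix $A$,
\[
\mathbf E\|(x-y)^{\top}A\|_2^2=\sum_j\sum_i \operatorname{Var}(y_i)A_{ij}^2\le \tfrac{h^2}{4}\|A\|_{\mathrm{HS}}^2 .
\]
\item $\|x-y\|_2\le h\sqrt n=2\beta/\alpha$, so $y\in S+\frac{4\beta}{\alpha}B_2^n$ deterministically, and $\|y-y_0\|_2\le \beta+\|x-y\|_2$.
\item $y_i\ne y_{0,i}$ forces either $|d_i|\ge 1$ or a rounding away from $0$, which happens with probability $|d_i|$. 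Hence
\[
\mathbf E|\operatorname{supp}(y-y_0)|\le\sum_i |d_i|+\#\{i:|d_i|\ge1\}\le 2\sqrt n\,\beta/h=\alpha n .
\]
\end{itemize}

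\textbf{Main obstacle.} The delicate step is to find a single realization that satisfies simultaneously the sparsity bound, the bound $\|y-y_0\|_2\le 2\beta$, and the matrix bound $\|(x-y)^{\top}A\|_2\le\frac{2\beta}{\alpha\sqrt n}\|A\|_{\mathrm{HS}}$, for the given $A$.

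I would first apply Markov's inequality to each of the three quantities, with thresholds twice to three times their means, so that each failure probability is at most $1/3$; a union bound then leaves a good realization.

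\emph{Matrix bound.} Markov at four times the mean of $\|(x-y)^{\top}A\|_2^2$ gives $\|(x-y)^{\top}A\|_2\le h\|A\|_{\mathrm{HS}}=\frac{2\beta}{\alpha\sqrt n}\|A\|_{\mathrm{HS}}$, with failure probability at most $1/4$. This is exactly the claimed constant.

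\emph{Distance bound.} Control $\|y-y_0\|_2$ through $\mathbf E\|y-x\|_2^2\le h^2 n/4$ rather than the worst case $h\sqrt n$.

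\emph{Sparsity.} Apply Markov at the support threshold, enlarging the $\alpha n$ in the definition of $\mathcal N$ by a constant factor; this only changes $C$.

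Two hypotheses enter here. The condition $\beta<\alpha/10$ guarantees $h\le 1/\sqrt n$, so the rounding is genuinely fine-scale. The condition $n\ge 1/\alpha^2$ ensures $\alpha n\ge 1$, making the support count meaningful.

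If the constants do not close, the fallback is a smaller lattice step $h=c\beta/(\alpha\sqrt n)$. Since the count depends on $h$ only through $\log(1/\alpha)$-type factors, this only worsens the exponent polynomially in $\alpha$, which is well within the $\alpha^{0.08}$ allowance.
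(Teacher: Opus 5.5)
\textbf{The step that fails.} Your definition of $\mathcal N$ includes the constraint $\|y-y_0\|_2\le 2\beta$, and your count (integer vectors $(y-y_0)/h$ of norm at most $2\beta/h=\alpha\sqrt n$) rests on it. Random rounding does not keep $y$ within $O(\beta)$ of $y_0$; the natural scale is $\beta/\sqrt{\alpha}$.

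Your proposed justification does not help. Since $h^2n/4=\beta^2/\alpha^2$, Markov applied to $\mathbf E\|y-x\|_2^2\le h^2n/4$ only gives $\|y-x\|_2\lesssim\beta/\alpha$, which is no better than the worst case $h\sqrt n$.

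The loss is real, not an artifact of the estimate. Suppose $x_i-y_{0,i}=\beta/\sqrt n$ for every $i$. Then $d_i=\alpha/2$ for every $i$, so $(y-y_0)/h$ has independent $\{0,1\}$-valued coordinates with mean $\alpha/2$. Hence $\|y-y_0\|_2^2=h^2\,\mathrm{Bin}(n,\alpha/2)$, which concentrates around $2\beta^2/\alpha$. The event $\|y-y_0\|_2\le 2\beta$ is the event $\mathrm{Bin}(n,\alpha/2)\le\alpha^2 n$. Its probability tends to $0$, and is exponentially small in $\alpha n$ when $\alpha$ is small. So your three-event union bound cannot close.

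Shrinking $h$ by a constant, as in your fallback, does not help either: the discrepancy is a factor $\alpha^{-1/2}$, not a constant.

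\textbf{The repair.} Drop that constraint. The size of $z:=(y-y_0)/h$ is already controlled by its support. Indeed $z_i\ne 0$ only on $\operatorname{supp} z$, and there $|z_i|\le |d_i|+1$. Also $\|d\|_1\le\sqrt n\,\|d\|_2\le\sqrt n\,\beta/h=\alpha n/2$. So on the event $|\operatorname{supp} z|\le 4\alpha n$ you get, deterministically, $\|z\|_1\le\frac92\alpha n$. Likewise $\|z\|_2^2\le 2\|d\|_2^2+8\alpha n\le 9\alpha n$, i.e.\ $\|y-y_0\|_2\le 6\beta/\sqrt{\alpha}$.

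Define $\mathcal N$ by the three conditions $|\operatorname{supp}(y-y_0)|\le 4\alpha n$, $\|(y-y_0)/h\|_1\le 5\alpha n$, and $y\in S+\frac{4\beta}{\alpha}B_2^n$. Counting supports and then integer vectors of bounded $\ell_1$-norm gives $|\mathcal N|\le N(S,\beta B_2^n)\binom{n}{4\alpha n}2^{4\alpha n}\binom{9\alpha n}{4\alpha n}\le N(S,\beta B_2^n)\,e^{C\alpha\log(1/\alpha)n}$.

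Only two Markov events then remain: the matrix bound at four times its mean, and the support at $4\alpha n$. Each fails with probability at most $1/4$. The centres $y_0$ need not lie in $S$, so no passage to $2\beta$ is needed, and $\beta<\alpha/10$ is never used.

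With this fix your argument proves the lemma as stated, with exactly the constant $\frac{2\beta}{\alpha\sqrt n}$ and even the better factor $e^{C\alpha\log(1/\alpha)n}$. The paper itself gives no proof: it quotes the lemma as a special case of a stronger theorem of Livshyts. Your first-moment random rounding is therefore an independent, more elementary route, and it suffices for this weakened form with deterministic $A$ and plain $\|A\|_{\mathrm{HS}}$.
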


\begin{proof}[Proof of Proposition \ref{prop:Invertibility for compressible vectors}]
Set $\alpha=288(\delta+2\rho)$ and $\beta = \delta+2\rho$, where $0<\delta,\rho\ll1$. Assume that
$n\ge1/\alpha^2$. Observe that
\begin{align*}
N\left(\operatorname{Comp}(\delta,\rho),\beta B_2^n\right)\le N\left(\mathrm{Sparse}(\delta)\cap\mathbb{S}^{n-1},(\beta-2\rho)B_2^n\right)\le\binom{n}{\delta n}\left(\frac{4}{\delta}\right)^{\delta n}
\le e^{2\delta\log(4/\delta)\,n}.
\end{align*}
Using Lemma \ref{SNDM}, we get a net $\mathcal{N}\subseteq\frac32B_2^n\backslash\frac12B_2^n$ with
\begin{align*}
|\mathcal{N}| \leq e^{2\delta\log(4/\delta)\,n} \cdot e^{C_{11} \alpha^{0.08} \log(1/\alpha) n}\le e^{C \alpha^{0.08} \log(1/\alpha) n}.
\end{align*}

Consider the event
\begin{align*}
\mathcal{E}:=\left\{ \inf_{x\in \operatorname{Comp}(\delta,\rho)} \|x^\top M_n\|_2 \le \sqrt{n}/144 \right\}.
\end{align*}
We suppose that $\mathcal{E}$ occurs. Then
$\|x^\top M_n\|_2\le \sqrt{n}/144$ for some $x\in \operatorname{Comp}(\delta,\rho)$. Since
$\|M_n\|_{\mathrm{HS}}\le n$, Lemma \ref{SNDM} shows the existence of $y\in\mathcal{N}$ with
\begin{align*}
\|(x-y)^\top M_n\|_2\le (2\beta/\alpha)\sqrt{n}\le \sqrt{n}/144.
\end{align*}
In particular, we have
\begin{align*}
\|y^\top M_n\|_2
\le \|x^\top M_n\|_2 + \|(x-y)^\top M_n\|_2
\le \sqrt{n}/72.
\end{align*}
For any $y \in \mathcal{N}$, Lemma \ref{LV} implies that
\begin{align*}
\textbf{P}\left(\left\|y^{\top} M_n\right\|_2 \leq \sqrt{n} / 72\right) \leq e^{-n/400}.
\end{align*}
Taking the union bound,
\begin{align*}
\textbf{P}\left( \inf_{y\in\mathcal{N}} \|y^\top M_n\|_2 \le \sqrt{n}/72 \right)
\le e^{C\alpha^{0.08}\log(1/\alpha)n}\cdot e^{-n/400}
< e^{-n/500},
\end{align*}
for sufficiently small \(\alpha\). Therefore, we obtain
\begin{align*}
\textbf{P}(\mathcal{E})
\le \textbf{P}\left( \inf_{y\in\mathcal{N}} \|y^\top M_n\|_2 \le \sqrt{n}/72 \right)
\le e^{-n/500}.
\end{align*}
for sufficiently large $n$, which completes our proof.
\end{proof}
For the incompressible vectors, we use the following “invertibility via distance” lemma from \cite{MR2407948}:

\begin{lemma}[Invertibility via distance]
Let $M$ be any random matrix. Let $R_1, \ldots, R_n$ denote the row vectors of $M$, and let $H_k$ denote the span of all row vectors except the $k$-th. Then for every $\delta, \rho \in(0,1)$ and every $\varepsilon \geq 0$, one has
\begin{align*}
\textbf{P}\left(\inf _{x \in \operatorname{Incomp}(\delta, \rho)}\left\|x^{\top}M\right\|_2 \leq \varepsilon \frac{\rho}{\sqrt{n}}\right) \leq \frac{1}{\delta n} \sum_{k=1}^n \textbf{P}\left(\operatorname{dist}\left(R_k, H_k\right) \leq \varepsilon\right) .
\end{align*}
\end{lemma}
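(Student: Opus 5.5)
We prove the "invertibility via distance" lemma of Rudelson and Vershynin, which is deterministic apart from Markov's inequality. It uses no property of $M$ except that its rows are indexed by $[n]$. The plan has two steps. First, on the event $\mathcal{E}$ that some incompressible $x$ has $\|x^\top M\|_2\le \varepsilon\rho/\sqrt n$, we show that many rows are close to the span of the others. Second, we count those rows in expectation.

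\emph{Step 1 (spread of incompressible vectors).} Every $x\in\operatorname{Incomp}(\delta,\rho)$ satisfies $|x_k|\ge \rho/\sqrt n$ for at least $\delta n$ indices $k$. To see this, let $\sigma=\{k:|x_k|\ge\rho/\sqrt n\}$ and suppose $|\sigma|<\delta n$. Then $x_\sigma$ is sparse, and
\begin{align*}
\|x-x_\sigma\|_2^2=\sum_{k\notin\sigma}x_k^2 .
\end{align*}
This sum is not immediately small, because each term is only bounded by $\rho^2/n$. The standard fix is to take instead $\sigma'=\{k:|x_k|>\rho/\sqrt n\}$ together with the $\delta n$ largest coordinates. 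If fewer than $\delta n$ coordinates exceed $\rho/\sqrt n$, we let $y$ equal $x$ on the largest $\delta n$ coordinates and $0$ elsewhere. Then every remaining coordinate is at most $\rho/\sqrt n$ in absolute value, yet $\|x-y\|_2>\rho$ by incompressibility, so the count must be done carefully. This is exactly the point I expect to need care. The clean route is to bound $\|x-y\|_2$ by $\rho$ using the small coordinates together with $\|x\|_2=1$. If that fails, we fall back on Rudelson–Vershynin's version, which uses the pair of thresholds $\rho/\sqrt{2n}$ and $1/\sqrt{\delta n}$ and loses only constants in $\delta$ and $\rho$. Since the statement allows the constants as written, we aim for the simple "spread set" $\sigma(x)$ with $|\sigma(x)|\ge\delta n$ and $|x_k|\ge\rho/\sqrt n$ on $\sigma(x)$.

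\emph{Step 2 (from small $x^\top M$ to small distances).} For any $k$ we have $x^\top M=\sum_i x_iR_i$. Write $P_k$ for the orthogonal projection onto $H_k^\perp$. Then
\begin{align*}
\|x^\top M\|_2\ \ge\ \Big\|P_k\sum_i x_iR_i\Big\|_2=|x_k|\,\|P_kR_k\|_2=|x_k|\operatorname{dist}(R_k,H_k).
\end{align*}
Hence on $\mathcal{E}$, for every $k\in\sigma(x)$,
\begin{align*}
\operatorname{dist}(R_k,H_k)\le \frac{\varepsilon\rho/\sqrt n}{\rho/\sqrt n}=\varepsilon .
\end{align*}
So on $\mathcal{E}$ the random count $N:=|\{k:\operatorname{dist}(R_k,H_k)\le\varepsilon\}|$ is at least $\delta n$.

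\emph{Step 3 (Markov).} Markov's inequality gives
\begin{align*}
\textbf{P}(\mathcal{E})\le\textbf{P}(N\ge\delta n)\le\frac{\textbf{E}N}{\delta n}=\frac{1}{\delta n}\sum_{k=1}^n\textbf{P}(\operatorname{dist}(R_k,H_k)\le\varepsilon),
\end{align*}
which is the claimed bound. The only genuine obstacle is Step 1, the existence of $\delta n$ coordinates of size at least $\rho/\sqrt n$. If the exact constants resist, we will follow Rudelson–Vershynin, bounding the mass outside the spread set and adjusting thresholds.
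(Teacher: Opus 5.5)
The gap is Step 1, which you leave open. The reason you give for it being delicate is mistaken: the tail sum is immediately small. With $\sigma=\{k:|x_k|\ge\rho/\sqrt n\}$, each of the at most $n$ coordinates outside $\sigma$ satisfies $x_k^2<\rho^2/n$, so
\begin{align*}
\|x-x_\sigma\|_2^2=\sum_{k\notin\sigma}x_k^2\le|\sigma^c|\,\frac{\rho^2}{n}\le\rho^2 .
\end{align*}
If $|\sigma|\le\delta n$, then $x_\sigma$ is sparse and $\operatorname{dist}(x,\operatorname{Sparse})\le\rho$, so $x$ would be compressible. Hence $|\sigma(x)|>\delta n$ for every $x\in\operatorname{Incomp}(\delta,\rho)$. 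You need neither the largest $\delta n$ coordinates, nor $\|x\|_2=1$, nor a second threshold. Your fallback would not prove the lemma as stated. The two-sided spread lemma of Rudelson--Vershynin gives only $|\sigma|\ge\frac12\rho^2\delta n$ indices with $|x_k|\ge\rho/\sqrt{2n}$. Feeding that into your Steps 2--3 yields $\frac{2}{\rho^2\delta n}\sum_k\textbf{P}(\operatorname{dist}(R_k,H_k)\le\sqrt2\,\varepsilon)$ on the right-hand side, which is a different, weaker inequality. The lemma has explicit constants, so losing factors in $\delta$ and $\rho$ is not permitted. As written, the proposal therefore does not establish the statement; with the one-line bound above it does.

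Once Step 1 is repaired, your Steps 2 and 3 are exactly the Rudelson--Vershynin argument, which the paper cites without reproving. One technicality remains. $\operatorname{Incomp}(\delta,\rho)$ is not closed, so the event $\inf_{x\in\operatorname{Incomp}(\delta,\rho)}\|x^\top M\|_2\le\varepsilon\rho/\sqrt n$ does not by itself produce an incompressible $x$ with $\|x^\top M\|_2\le\varepsilon\rho/\sqrt n$, as your description of $\mathcal E$ assumes. The clean fix is to argue on the complement. Suppose fewer than $\delta n$ indices satisfy $\operatorname{dist}(R_k,H_k)\le\varepsilon$. Let $\varepsilon'>\varepsilon$ be the minimum of $\operatorname{dist}(R_j,H_j)$ over the remaining indices, of which there are more than $(1-\delta)n$. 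This set meets $\sigma(x)$ for every incompressible $x$, so $\|x^\top M\|_2\ge\varepsilon'\rho/\sqrt n$, and the infimum is strictly larger than $\varepsilon\rho/\sqrt n$. Alternatively, pass to a limit point $x^*$ of a minimizing sequence. It satisfies $\operatorname{dist}(x^*,\operatorname{Sparse})\ge\rho$, and the strict bound $x_k^2<\rho^2/n$ off $\sigma(x^*)$ still forces $|\sigma(x^*)|>\delta n$.
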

Since the rows of \(M_n\) are independent and identically distributed, we have
\begin{align*}
\textbf{P}\left(\inf _{x \in \operatorname{Incomp}(\delta, \rho)}\left\|x^{\top}M_n\right\|_2 \leq \varepsilon \frac{\rho}{\sqrt{n}}\right) \leq \frac{1}{\delta n} \sum_{k=1}^n \textbf{P}\left(\operatorname{dist}\left(R_k, H_k\right) \leq \varepsilon\right)\le\frac{1}{\delta}  \textbf{P}\left(\operatorname{dist}\left(R_n, H_n\right) \leq \varepsilon\right) .
\end{align*}
Now, the proof of the main result is completed by an application of Theorem \ref{theorem:distance}.

\begin{proof}[Proof of Theorem \ref{main}]
\begin{align*}
\textbf{P}\left(s_n(M_n)\le\varepsilon\frac{ \rho}{\sqrt{n}}\right)&=\textbf{P}\left(\inf_{x\in\mathbb{S}^{n-1}}\|x^{\top}M_n\|_2\le\varepsilon\frac{ \rho}{\sqrt{n}}\right)\\
&\le\textbf{P}\left(\inf _{x \in \operatorname{Comp}(\delta, \rho)}\|x^{\top} M_n\|_2 \leq \varepsilon \frac{\rho}{\sqrt{n}}\right)+\textbf{P}\left(\inf _{x \in \operatorname{Incomp}(\delta, \rho)}\|x^{\top}M_n\|_2 \leq \varepsilon \frac{\rho}{\sqrt{n}}\right)\\
& \le\textbf{P}\left(\inf _{x \in \operatorname{Comp}(\delta, \rho)}\|x^{\top}M_n\|_2 \leq \sqrt{n} / 144\right)+\frac{1}{\delta} \textbf{P}\left(\operatorname{dist}\left(R_n, H_n\right) \leq \varepsilon\right)\\
& \leq 2 e^{-c_{10} n}+\frac{1}{\delta}\left(C \varepsilon+2 e^{-c n}\right),
\end{align*}
for sufficiently large $n$, where the last inequality follows from Proposition \ref{prop:Invertibility for compressible vectors} and Theorem \ref{theorem:distance}.

\end{proof}

\printbibliography

@article {MR950344,
    AUTHOR = {Yin, Y. Q. and Bai, Z. D. and Krishnaiah, P. R.},
     TITLE = {On the limit of the largest eigenvalue of the
              large-dimensional sample covariance matrix},
   JOURNAL = {Probab. Theory Related Fields},
  FJOURNAL = {Probability Theory and Related Fields},
    VOLUME = {78},
      YEAR = {1988},
    NUMBER = {4},
     PAGES = {509--521},
      ISSN = {0178-8051,1432-2064},
       DOI = {10.1007/BF00353874},
       URL = {https://doi.org/10.1007/BF00353874},
}

@book {MR157875,
    AUTHOR = {von Neumann, John},
     TITLE = {Collected works. {V}ol. {V}: {D}esign of computers, theory of
              automata and numerical analysis},
      NOTE = {General editor: A. H. Taub},
 PUBLISHER = {The Macmillan Company, New York},
      YEAR = {1963},
     PAGES = {ix+784 pp. (1 plate)},
}

@article{doi:10.1137/0609045,
author = {Edelman, Alan},
title = {Eigenvalues and Condition Numbers of Random Matrices},
journal = {SIAM Journal on Matrix Analysis and Applications},
volume = {9},
number = {4},
pages = {543-560},
year = {1988},
doi = {10.1137/0609045},
URL = {https://doi.org/10.1137/0609045},
eprint = {https://doi.org/10.1137/0609045},
}

@article {MR2480613,
    AUTHOR = {Tao, Terence and Vu, Van H.},
     TITLE = {Inverse {L}ittlewood-{O}fford theorems and the condition
              number of random discrete matrices},
   JOURNAL = {Ann. of Math. (2)},
  FJOURNAL = {Annals of Mathematics. Second Series},
    VOLUME = {169},
      YEAR = {2009},
    NUMBER = {2},
     PAGES = {595--632},
      ISSN = {0003-486X,1939-8980},
       DOI = {10.4007/annals.2009.169.595},
       URL = {https://doi.org/10.4007/annals.2009.169.595},
}

@article {MR2434885,
    AUTHOR = {Rudelson, Mark},
     TITLE = {Invertibility of random matrices: norm of the inverse},
   JOURNAL = {Ann. of Math. (2)},
  FJOURNAL = {Annals of Mathematics. Second Series},
    VOLUME = {168},
      YEAR = {2008},
    NUMBER = {2},
     PAGES = {575--600},
      ISSN = {0003-486X,1939-8980},
       DOI = {10.4007/annals.2008.168.575},
       URL = {https://doi.org/10.4007/annals.2008.168.575},
}

@article {MR2407948,
    AUTHOR = {Rudelson, Mark and Vershynin, Roman},
     TITLE = {The {L}ittlewood-{O}fford problem and invertibility of random
              matrices},
   JOURNAL = {Adv. Math.},
  FJOURNAL = {Advances in Mathematics},
    VOLUME = {218},
      YEAR = {2008},
    NUMBER = {2},
     PAGES = {600--633},
      ISSN = {0001-8708,1090-2082},
   MRCLASS = {60E15 (60B20)},
  MRNUMBER = {2407948},
MRREVIEWER = {Ben\ Joseph\ Green},
       DOI = {10.1016/j.aim.2008.01.010},
       URL = {https://doi.org/10.1016/j.aim.2008.01.010},
}

@article {MR4076632,
    AUTHOR = {Tikhomirov, Konstantin},
     TITLE = {Singularity of random {B}ernoulli matrices},
   JOURNAL = {Ann. of Math. (2)},
  FJOURNAL = {Annals of Mathematics. Second Series},
    VOLUME = {191},
      YEAR = {2020},
    NUMBER = {2},
     PAGES = {593--634},
      ISSN = {0003-486X,1939-8980},
       DOI = {10.4007/annals.2020.191.2.6},
       URL = {https://doi.org/10.4007/annals.2020.191.2.6},
}

@article {MR4810062,
    AUTHOR = {Campos, Marcelo and Jenssen, Matthew and Michelen, Marcus and
              Sahasrabudhe, Julian},
     TITLE = {The singularity probability of a random symmetric matrix is
              exponentially small},
   JOURNAL = {J. Amer. Math. Soc.},
  FJOURNAL = {Journal of the American Mathematical Society},
    VOLUME = {38},
      YEAR = {2025},
    NUMBER = {1},
     PAGES = {179--224},
      ISSN = {0894-0347,1088-6834},
       DOI = {10.1090/jams/1042},
       URL = {https://doi.org/10.1090/jams/1042},
}

@article {MR4523249,
    AUTHOR = {Jain, Vishesh and Sah, Ashwin and Sawhney, Mehtaab},
     TITLE = {The smallest singular value of dense random regular digraphs},
   JOURNAL = {Int. Math. Res. Not. IMRN},
  FJOURNAL = {International Mathematics Research Notices. IMRN},
      YEAR = {2022},
    NUMBER = {24},
     PAGES = {19300--19334},
      ISSN = {1073-7928,1687-0247},
       DOI = {10.1093/imrn/rnab247},
       URL = {https://doi.org/10.1093/imrn/rnab247},
}

@misc{tran2020smallestsingularvaluerandom,
      title={The smallest singular value of random combinatorial matrices}, 
      author={Tuan Tran},
      year={2020},
      eprint={2007.06318},
      archivePrefix={arXiv},
      primaryClass={math.PR},
      url={https://arxiv.org/abs/2007.06318}, 
}

@article {MR3602844,
    AUTHOR = {Cook, Nicholas A.},
     TITLE = {On the singularity of adjacency matrices for random regular
              digraphs},
   JOURNAL = {Probab. Theory Related Fields},
  FJOURNAL = {Probability Theory and Related Fields},
    VOLUME = {167},
      YEAR = {2017},
    NUMBER = {1-2},
     PAGES = {143--200},
      ISSN = {0178-8051,1432-2064},
       DOI = {10.1007/s00440-015-0679-8},
       URL = {https://doi.org/10.1007/s00440-015-0679-8},
}

@book {MR3837109,
    AUTHOR = {Vershynin, Roman},
     TITLE = {High-dimensional probability},
    SERIES = {Cambridge Series in Statistical and Probabilistic Mathematics},
    VOLUME = {47},
      NOTE = {An introduction with applications in data science,
              With a foreword by Sara van de Geer},
 PUBLISHER = {Cambridge University Press, Cambridge},
      YEAR = {2018},
     PAGES = {xiv+284},
      ISBN = {978-1-108-41519-4},
       DOI = {10.1017/9781108231596},
       URL = {https://doi.org/10.1017/9781108231596},
}

@book{milman1986asymptotic,
  title={Asymptotic theory of finite dimensional normed spaces},
  author={Milman, Vitali D and Schechtman, Gideon},
  year={1986},
  publisher={Springer}
}

@article {MR2146352,
    AUTHOR = {Litvak, A. E. and Pajor, A. and Rudelson, M. and
              Tomczak-Jaegermann, N.},
     TITLE = {Smallest singular value of random matrices and geometry of
              random polytopes},
   JOURNAL = {Adv. Math.},
  FJOURNAL = {Advances in Mathematics},
    VOLUME = {195},
      YEAR = {2005},
    NUMBER = {2},
     PAGES = {491--523},
      ISSN = {0001-8708,1090-2082},
       DOI = {10.1016/j.aim.2004.08.004},
       URL = {https://doi.org/10.1016/j.aim.2004.08.004},
}

@article {MR4361906,
    AUTHOR = {Livshyts, Galyna V.},
     TITLE = {The smallest singular value of heavy-tailed not necessarily
              i.i.d. random matrices via random rounding},
   JOURNAL = {J. Anal. Math.},
  FJOURNAL = {Journal d'Analyse Math\'ematique},
    VOLUME = {145},
      YEAR = {2021},
    NUMBER = {1},
     PAGES = {257--306},
      ISSN = {0021-7670,1565-8538},
       DOI = {10.1007/s11854-021-0183-2},
       URL = {https://doi.org/10.1007/s11854-021-0183-2},
}

@article {MR4904157,
    AUTHOR = {Roos, Bero},
     TITLE = {New inequalities for permanents and hafnians and some
              generalizations},
   JOURNAL = {Linear Multilinear Algebra},
  FJOURNAL = {Linear and Multilinear Algebra},
    VOLUME = {73},
      YEAR = {2025},
    NUMBER = {8},
     PAGES = {1634--1667},
      ISSN = {0308-1087,1563-5139},
       DOI = {10.1080/03081087.2024.2436061},
       URL = {https://doi.org/10.1080/03081087.2024.2436061},
}

@article {MR494478,
    AUTHOR = {Hal\'asz, G.},
     TITLE = {Estimates for the concentration function of combinatorial
              number theory and probability},
   JOURNAL = {Period. Math. Hungar.},
  FJOURNAL = {Periodica Mathematica Hungarica. Journal of the J\'anos Bolyai
              Mathematical Society},
    VOLUME = {8},
      YEAR = {1977},
    NUMBER = {3-4},
     PAGES = {197--211},
      ISSN = {0031-5303,1588-2829},
       DOI = {10.1007/BF02018403},
       URL = {https://doi.org/10.1007/BF02018403},
}

@book {MR2103758,
    AUTHOR = {Good, Phillip},
     TITLE = {Permutation, parametric and bootstrap tests of hypotheses},
    SERIES = {Springer Series in Statistics},
   EDITION = {Third},
 PUBLISHER = {Springer-Verlag, New York},
      YEAR = {2005},
     PAGES = {xx+315},
      ISBN = {0-387-20279-X},
}

@article {MR4356701,
    AUTHOR = {Jain, Vishesh and Sah, Ashwin and Sawhney, Mehtaab},
     TITLE = {Singularity of discrete random matrices},
   JOURNAL = {Geom. Funct. Anal.},
  FJOURNAL = {Geometric and Functional Analysis},
    VOLUME = {31},
      YEAR = {2021},
    NUMBER = {5},
     PAGES = {1160--1218},
      ISSN = {1016-443X,1420-8970},
       DOI = {10.1007/s00039-021-00580-6},
       URL = {https://doi.org/10.1007/s00039-021-00580-6},
}

@article {MR5042290,
    AUTHOR = {Li, Dongbin and Litvak, Alexander E. and Yu, Tingzhou},
     TITLE = {An upper bound on the smallest singular value of dense random
              combinatorial matrices},
   JOURNAL = {J. Complexity},
  FJOURNAL = {Journal of Complexity},
    VOLUME = {95},
      YEAR = {2026},
     PAGES = {Paper No. 102031},
      ISSN = {0885-064X,1090-2708},
       DOI = {10.1016/j.jco.2026.102031},
       URL = {https://doi.org/10.1016/j.jco.2026.102031},
}

\end{document}